\definecolor{cite}{RGB}{44,123,182}
\definecolor{ref}{RGB}{215,25,28}
\newtheorem{thm}{Theorem}[section]
\newtheorem{corollary}[thm]{Corollary}
\newtheorem{lemma}[thm]{Lemma}
\newtheorem{proposition}[thm]{Proposition}
\theoremstyle{definition}
\newtheorem{definition}[thm]{Definition}
\newtheorem{example}[thm]{Example}
\newtheorem{notation}[thm]{Notation}
\newtheorem{conjecture}[thm]{Conjecture}
\newtheorem*{claim*}{Claim}
\newtheorem{remark}[thm]{Remark}
\newcommand{\C}{\mathbb{C}}
\newcommand{\Z}{\mathbb{Z}}
\newcommand{\Q}{\mathbb{Q}}
\newcommand{\R}{\mathbb{R}}
\newcommand{\HH}{\mathbb{H}}
\newcommand{\pr}[1]{\mathbb P^{#1}}
\newcommand{\oo}[1]{\omega_{#1}}
\newcommand{\HOM}[3]{\text{Hom}_{{#3}}\left( #1,#2\right) }
\DeclareMathOperator{\id}{id}
\DeclareMathOperator{\rk}{rk}
\DeclareMathOperator{\GL}{GL}
\DeclareMathOperator{\Hom}{Hom}
\DeclareMathOperator{\Ext}{Ext}
\DeclareMathOperator{\ext}{ext}
\DeclareMathOperator{\Aut}{Aut}
\DeclareMathOperator{\rad}{rad}
\DeclareMathOperator{\re}{Re}
\DeclareMathOperator{\im}{Im}
\DeclareMathOperator{\Tot}{Tot}
\DeclareMathOperator{\Pic}{Pic\,}
\DeclareMathOperator{\Hilb}{Hilb}
\DeclareMathOperator{\Stab}{Stab}
\DeclareMathOperator{\Br}{Br}
\DeclareMathOperator{\GGL}{\widetilde {\GL}^+(2, \R)}  
\DeclareMathOperator{\textch}{ch}												
\DeclareMathOperator{\cch}{Ch} 
\newcommand{\ch}[1]{\textch_{{#1}}}
\newcommand{\bfv}{\mathbf{v}}
\newcommand{\bfw}{\mathbf{w}}
\DeclareMathOperator{\Coh}{Coh}
\DeclareMathOperator{\Per}{Per}
\newcommand{\sX}{\mathcal{X}}       
\newcommand{\sA}{\mathcal{A}}
\newcommand{\sB}{\mathcal{B}}
\newcommand{\sO}{\mathcal{O}}
\newcommand{\sK}{\mathcal{K}}
\newcommand{\sL}{\mathcal{L}}
\newcommand{\sT}{\mathcal{T}}
\newcommand{\sC}{\mathcal{C}}
\newcommand{\sD}{\mathcal{D}}
\newcommand{\sF}{\mathcal{F}}
\newcommand{\PP}{\mathcal{P}}
\newcommand{\st}{\;|\;}					
\newcommand{\AAA}[1]{\textcolor{red}{#1}}
\newcommand{\quotient}[2]{{\raisebox{.2em}{$#1$}\left/\raisebox{-.2em}{$#2$}\right.}}
\DeclarePairedDelimiter{\set}{\lbrace}{\rbrace}
\DeclarePairedDelimiter{\pair}{\langle}{\rangle}
\DeclarePairedDelimiter{\norm}{\lVert}{\rVert}
\DeclarePairedDelimiter{\abs}{\lvert}{\rvert}
\title{The stability manifold of local orbifold elliptic quotients}
\author[F. Rota]{Franco Rota}
\address{FR: School of Mathematics and Statistics, University of Glasgow, Glasgow G12 8QQ, United Kingdom} 
\email{franco.rota@glasgow.ac.uk}
\subjclass[2020]{18G80 (primary); 14H45, 14J33 (secondary)}
\thanks{The author was partially supported by NSF-FRG grant DMS 1663813 and by EPSRC grant EP/R034826/1}
\begin{document}

\begin{abstract}
We study the stability manifold of local models of orbifold quotients of elliptic curves. In particular, we show that a region of the stability manifold is a covering space of the regular set of the Tits cone of the associated elliptic root system. The construction requires an explicit description of the McKay correspondence \cite{BKR01} for $A_N$ surface singularities and a study of wall-crossing phenomena. 
\end{abstract}
\maketitle



\section{Introduction}

The space of stability conditions on a triangulated category $\sD$ was introduced by Bridgeland in \cite{Bri07_triang_cat}, following work of Douglas on $\Pi$-stability in string theory \cite{Dou02}.  Bridgeland shows that the set of these stability conditions is a complex manifold $\Stab(\sD)$ \cite{Bri07_triang_cat}, equipped with a local isomorphism 
$$\pi\colon \Stab(\sD) \to \Hom(K(\sD),\C).$$

The stability manifold $\Stab(\sD)$ is fully understood in the case when $\sD$ is the derived category of coherent sheaves on a smooth projective curve (see \cite{Bri07_triang_cat} for the elliptic curve, \cite{Mac07} for curves of positive genus, and \cite{BMW15}, \cite{Oka06} for the projective line). In the case of an elliptic curve, the stability manifold acquires a mirror-symmetric significance, in fact, it can be expressed as a $\C^*$-bundle over the modular curve \cite{Bri09_spaces}.

In this paper, we show that a similar interpretation is possible for quotients of elliptic curves by a group of automorphisms. We work on surfaces and describe the stability manifold for $\sD$ a certain triangulated category on a local model of an elliptic quotient. The main result of this paper is Theorem \ref{thm_thm1.1}, which expresses $\Stab(\sD)$ as a covering space of a subset of $\Hom(K(\sD),\C)$ determined by the data of the quotient. Theorem \ref{thm_thm1.1} represents an extension of previous results in two directions: on the one hand, it is an analog of the work of Bridgeland and Thomas on Kleinian singularities \cite{Bri09_kleinian}, \cite{Tho06} in the context of simple elliptic singularities. At the same time, it extends Ikeda's result \cite{Ike14} on arbitrary root systems of symmetric Kac-Moody Lie algebras to the case of elliptic root systems.


\subsection*{Summary of the results}

Let $X$ be the orbifold quotient of an elliptic curve $E$ by a group of group automorphisms of $E$. The orbifold $X$ is a weighted projective line of genus $0$ in the sense of Geigle and Lenzing \cite{GL87}. We consider its local model; in other words, we embed $X$ as the zero section in the total space of its cotangent bundle $Y\coloneqq\Tot(\omega_X)$, and let $\sD$ be the triangulated subcategory of $D^b(\Coh(Y))$ generated by sheaves supported on $X$.

Studying $\sD$, rather than $D^b(X)$, has two main advantages: the elliptic root system associated with $X$ is more evident, and one can use the McKay correspondence to compare the local orbifold to a smooth surface. From this point of view, local orbifold elliptic quotients represent an analog of Kleinian singularities.


The space $\Hom(K(\sD),\C)$ can be given a representation-theoretic interpretation as follows. The bilinear Euler form $\chi\colon K(\sD)\times K(\sD) \to \Z$ defined as 
$$ \chi(E,F)\coloneqq \sum\limits_{i=0}^\infty (-1)^i \dim_\C\Hom_\sD(E,F[i]) $$
is symmetric since $\sD$ is a K3-category, and $K(\sD)$ is identified with the root lattice of an elliptic root system $R$,
whose bilinear form matches the Euler form. This premise is similar to Bridgeland's in \cite{Bri09_kleinian}, with the difference that $\chi$ is only negative semindefinite here. We denote by $a\coloneqq -[\sO_x]$ and $b \coloneqq\sum_{i=0}^2[\omega_X^{\otimes i}]$ the two classes generating its radical.

The Weyl group $W$ on $\Hom(K(\sD),\C)$ acts on the region 
$$\mathbb{E}\coloneqq \left\lbrace Z\in \Hom(K(\sD),\C) \st Z(a)=1, \im Z(b)>0 \right\rbrace, $$
which coincides with the Tits cone of the affine root system $R_a = R / \Z a$ (Lemma \ref{lem_E_is_Tits}).
Let $D$ be a fundamental domain for the action of $W$ on $\mathbb{E}$. We exhibit a region $U$ in the stability manifold which is homeomorphic to $D$ (Prop. \ref{prop_FundamentalRegionU}) and lift the action of $W$ using a group $\Br(\sD)$ of autoequivalences of $\sD$, generated by spherical twists (as defined by Seidel and Thomas \cite{ST01}).

A key step in the construction of $U$ is the McKay correspondence \cite{BKR01}: it gives an equivalence of categories between $D^b(Y)$ and the minimal resolution $Y'$ of the coarse space of $Y$. 
In turn, this induces an equivalence between $\sD$ and the triangulated category $\sD'$ generated by sheaves supported on the pull-back of the zero section to $Y'$. We define a heart of a bounded t-structure $\mathcal{A}_R\subset\sD$ as the inverse image of $\Coh(Y')\cap \sD'\subset \sD'$. 
Then, we use the relation between coherent sheaves on $Y$ and perverse sheaves on $Y'$ (see \cite{Bri02_flops,VdB04_flops}) to explicitly describe $\sA_R$, classify its objects (Prop. \ref{prop_classification-objects-A}), and finally define $U$ as the region of $\Stab(\sD)$ containing conditions $(Z,\mathcal{A}_R)$ with $Z\in D$.
Denote by $\Stab^\dagger(\sD)$) the connected component of $\Stab(\sD)$ containing $U$.
 
We will often restrict our attention to the locus of \textit{normalized} stability conditions 
$$ \Stab_n(\sD)\coloneqq\{ \sigma=(Z,\PP) \in \Stab^\dagger(\sD) \st Z(a)=1 \} $$
rather than the full $\Stab^\dagger(\sD)$, and we let $\Stab_n^\dagger(\sD)$ be the connected component of $\Stab_n(\sD)$ containing $U$.
Normalization is a natural approach, effective in the study of threefold singularities (see for example \cite{Tod08_crepant}) and fitting with the representation-theoretic definition of $\mathbb{E}$. Moreover, every stability condition in $\Stab^\dagger(\sD)$ is obtained from $\Stab_n(\sD)$ using the natural $\C$-action (see Remark \ref{rmk_normalization}).

We show in Prop. \ref{prop_circ=dagger} that the condition 
\[ (*)\colon \quad  \im \frac{Z(b)}{Z(a)}>0\]
is automatic for all stability conditions in $\Stab^\dagger(\sD)$ (and hence in $\Stab_n^\dagger(\sD)$), and therefore $\pi$ maps $\Stab_n^\dagger(\sD)$ to $\mathbb{E}$.
The proof requires to understand wall-crossing for some specific classes in $K(\sD)$, which we do in Section \ref{sec_WallCrossing}. Our wall-crossing result can be viewed as a local analog of the classification of indecomposable sheaves on $X$ by Lenzing and Meltzer \cite[Theor. 4.6]{LM93}:

\begin{thm}[ = \ref{thm_enough_spherical_objects} ]
\label{thm_wall_crossing}
Let $\alpha$ be a root in the elliptic root lattice $K(\sD)$, and let $\sigma\in\Stab^\dagger(\sD)$ be generic with respect to $\alpha$. Then, there exists a $\sigma$-stable object $E$ of class $\alpha$. The object $E$ is rigid if $\alpha$ is a real root, and it varies in a family if $\alpha$ is imaginary.
\end{thm}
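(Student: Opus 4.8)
The plan is to separate the statement into the existence of a $\sigma$-stable object of class $\alpha$, which carries all the content, and the rigid-versus-family dichotomy, which then follows formally. Any $\sigma$-stable object $E$ is simple, so $\Hom_\sD(E,E)=\C$, and since $\sD$ is a K3-category Serre duality gives $\Hom_\sD(E,E[2])\cong\Hom_\sD(E,E)^{*}=\C$; hence $\chi(\alpha,\alpha)=2-\dim_\C\Ext^1(E,E)$. Under the identification of $K(\sD)$ with the elliptic root lattice respecting the Euler form, a real root satisfies $\chi(\alpha,\alpha)=2$ and an imaginary root satisfies $\chi(\alpha,\alpha)=0$. Therefore $\Ext^1(E,E)=0$ precisely when $\alpha$ is real, so $E$ is rigid, whereas $\Ext^1(E,E)\cong\C^2$ when $\alpha$ is imaginary, so $E$ deforms in a positive-dimensional family. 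This reduces the theorem to producing a $\sigma$-stable object of class $\alpha$ for every $\sigma\in\Stab^\circ(\sD)$ generic with respect to $\alpha$.

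For existence I would first fix a base stability condition $\sigma_0=(Z,\sA)$ with $Z\in D$ and establish the claim there. Under the McKay equivalence $\sD\simeq\sD'$ the heart $\sA$ corresponds to $\Coh(S)\cap\sD'$, whose simple objects are the (shifted) structure sheaves of the rational curves in the exceptional configuration on $S$ together with a skyscraper; their classes are exactly the simple roots $\alpha_0,\dots,\alpha_n$, and as simple objects of the heart they are automatically $\sigma_0$-stable. This is the base case for the simple roots. For the imaginary direction, the generator $\delta$ of the radical is the class of the structure sheaf of the zero section (the anticanonical cycle on $S$), and its twists by line bundles of degree zero furnish a family of $\sigma_0$-stable objects, giving the base case for imaginary roots.

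Next I would propagate stability across the component. Each simple object $S_i$ is spherical and induces a spherical twist $T_{S_i}\in\Aut(\sD)$, acting on $K(\sD)$ by the reflection $s_{\alpha_i}$ and, as one checks, preserving $\Stab^\circ(\sD)$; these reflections generate $W$, for which $D$ is a fundamental domain. Since every real root lies in the $W$-orbit of a simple root, writing $\alpha=w(\alpha_i)$ and lifting $w$ to a composite $\Phi_w$ of spherical twists produces a $\Phi_w(\sigma_0)$-stable object $\Phi_w(S_i)$ of class $\alpha$, and the imaginary roots are reached in the same way from the family of class $\delta$. To pass from $\Phi_w(\sigma_0)$ to the prescribed $\sigma$ I would use that the locus of stability conditions admitting a stable object of class $\alpha$ is open, that the walls for $\alpha$ are locally finite of real codimension one, and that $\Stab^\circ(\sD)$ is connected, so that it suffices to show stability of class $\alpha$ persists across each wall.

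This wall-crossing step is where I expect the main difficulty. On a wall for $\alpha$ a stable object acquires a Jordan--H\"older filtration whose factors are stable of the same phase and whose classes, by induction on the mass, are roots lying on a common ray; I would argue that these classes span a proper root subsystem of the elliptic lattice, and that, because $\alpha$ is real while $\chi$ is positive semidefinite, at least one factor class $\beta$ is a real root. The spherical twist $T_\beta^{\pm1}$ then transports the stable object on one side of the wall to a stable object of class $\alpha$ on the other, since $s_\beta$ moves $\alpha$ only by controlled multiples of the factor classes. An induction on the number of walls separating $\sigma$ from the base chamber, with termination guaranteed by the finiteness of the relevant subsystems, yields the desired $\sigma$-stable object. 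The delicate points, absorbing most of the work, are the verification that the factor classes form an honest root subsystem and that the induction terminates for every real root; the imaginary case is comparatively soft, since the family of class $\delta$ cannot be totally destabilized.
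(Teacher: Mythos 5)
Your overall architecture (the dichotomy via $\chi(\alpha,\alpha)$ and Serre duality, a base case at a distinguished stability condition, propagation across walls using the connectedness of $\Stab^\circ(\sD)$ and local finiteness of walls) matches the paper's, and your treatment of the rigid-versus-family statement is fine. However, there are two genuine gaps in the existence part. The first is the base case for imaginary roots: every reflection $w_\beta(x)=x-I(x,\beta)\beta$ fixes $\rad I$ pointwise, and likewise a spherical twist acts on $K(\sD)$ by $[G]\mapsto[G]-\chi(S,G)[S]$, which fixes every radical class. Hence neither the Weyl group nor $\Br(\sD)$ ever changes an imaginary class, and your family of objects of class $\delta$ can only produce stable objects of that one class; the other primitive imaginary roots $mb+na$ are never reached, and wall-crossing cannot create them either since it preserves the class. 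The paper avoids this by taking as base point the slope stability condition $\tau_0$ of Geigle--Lenzing and invoking the Lenzing--Meltzer classification of indecomposable sheaves on the weighted projective line, which supplies an indecomposable (hence semistable, and spherical resp.\ semi-rigid) sheaf of \emph{every} positive class in $R\cup\Delta_{im}$ at once, with no group transport needed.

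The second gap is the wall-crossing mechanism itself. If $\beta$ is the class of a destabilizing factor, the twist $\Phi_\beta^{\pm1}$ sends an object of class $\alpha$ to one of class $w_\beta(\alpha)=\alpha-\chi(\beta,\alpha)\beta$, which is not $\alpha$ in general, and it sends $\sigma$-stable objects to $\Phi_\beta^{\pm1}\sigma$-stable objects rather than to objects stable for a stability condition in the adjacent chamber of the \emph{same} wall-and-chamber structure; so it does not transport a stable object of class $\alpha$ across the wall in the required sense. The paper's mechanism is different: the wall determines a rank-two lattice $H_W$ on which the Mukai pairing is negative (semi)definite, which pins down the few possible spherical classes in $H_W$ and forces the Jordan--H\"older filtration on the wall to have exactly two factors $B\to E\to A$; indecomposability together with the K3-category symmetry $\Ext^1(A,B)\simeq\Ext^1(B,A)$ yields the reversed nonsplit extension $A\to E'\to B$, and one of $E$, $E'$ is stable on each side of the wall by \cite[Lemma 9.3]{BM14_MMP}. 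Your observation that at least one factor class is a real (spherical) root is the easy part of this step; the actual transport requires the extension-reversal argument, not a spherical twist.
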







The image of $\Stab_n^\dagger(\sD)$ is the set of regular orbits of $W$ in $\mathbb{E}$, denoted $\mathsf{X}_{\mathrm{reg}}$ (Prop. \ref{prop_image_of_StabDagger}). Moreover, the action of $\Br(\sD)$ preserves $\Stab_n^\dagger(\sD)$, and $U$ is a fundamental domain for this action. 
This leads to the main result of this paper (analogous to \cite[Theorem 1.1]{Bri09_kleinian} and \cite[Theorem 1.1]{Ike14}):
 
\begin{thm}[ = \ref{thm_mainThm} ]
\label{thm_thm1.1}
There is a covering map
$$\bar\pi\colon\Stab_n^\dagger(\sD) \to \mathsf{X}_{\mathrm{reg}}/W,$$
and the group $\Br(\sD)$ acts as group of deck transformations.
\end{thm}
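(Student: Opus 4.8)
The plan is to realise $\bar\pi$ as the composite of Bridgeland's central-charge map with the quotient by $\tilde W$, and then to upgrade it from a local homeomorphism to a covering using the wall-crossing input of Theorem~\ref{thm_wall_crossing} together with the fundamental-domain $U\cong D$ that is already in hand. Write $\pi\colon\Stab^\dagger(\sD)\to\Hom(K(\sD),\C)$ for the map $\sigma=(Z,\PP)\mapsto Z$. The first task is to check that $\pi$ lands in the regular locus. By Theorem~\ref{thm_wall_crossing}, for each real root $\alpha$ and each $\sigma$ generic with respect to $\alpha$ there is a $\sigma$-stable object $E$ with $[E]=\alpha$; since admitting a $\sigma$-semistable object of a fixed class is a closed condition on $\sigma$, every $\sigma\in\Stab^\dagger(\sD)$ admits a $\sigma$-semistable object of class $\alpha$, and any such nonzero object satisfies $Z_\sigma(\alpha)\neq 0$. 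As $X_{reg}$ is the complement in $\Hom(K(\sD),\C)$ of the real-root hyperplanes $\{Z(\alpha)=0\}$, this gives $\pi(\Stab^\dagger(\sD))\subseteq X_{reg}$, and I set $\bar\pi\coloneqq q\circ\pi$ where $q\colon X_{reg}\to X_{reg}/\tilde W$ is the quotient projection.

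Next I would record the equivariance that produces the two factors of the deck group. The spherical twist $\Phi_S$ attached to a spherical object $S$ acts on $K(\sD)$ by the reflection in the real-root class $[S]$, so the $\Br(\sD)$-action on $\Stab^\dagger(\sD)$ intertwines with the reflection action on $\Hom(K(\sD),\C)$: there is a homomorphism $\rho\colon\Br(\sD)\to\tilde W$ with $\pi(\Phi\cdot\sigma)=\rho(\Phi)\cdot\pi(\sigma)$, whose image is the subgroup generated by the real reflections. Because $Z(E[2])=Z(E)$, the shift $[2]$ fixes every central charge and hence covers the identity on $X_{reg}$; since the $(\ast)$-region is preserved by $[2]$, this exhibits $\Z[2]$ as deck transformations of $\pi$ itself. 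By contrast $Z(E[1])=-Z(E)$, so the odd shift descends to the nontrivial automorphism of the base induced by $-\id$ (one checks $-\id\notin\tilde W$), which is exactly why the deck group contains $\Z[2]$ rather than all of $\Z$. In particular $\bar\pi$ is invariant under $G\coloneqq\Z[2]\times\Br(\sD)$.

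To promote $\bar\pi$ to a covering I would argue as follows. By Bridgeland's deformation theorem $\pi$ is a local isomorphism, and since $\tilde W$ acts freely and properly discontinuously on $X_{reg}$ the map $q$ is a covering; hence $\bar\pi$ is a local homeomorphism. The identification $\pi|_U\colon U\xrightarrow{\sim} D$ onto a fundamental domain $D$ for $\tilde W$ shows $\bar\pi|_U$ is a homeomorphism onto $X_{reg}/\tilde W$. The remaining point is that the $G$-translates $\{g\cdot U\}_{g\in G}$ tile $\Stab^\dagger(\sD)$: they cover it because the component is connected and any stability condition can be carried into $U$ by a finite sequence of spherical twists and a shift, crossing one real-root wall at a time as bookkept by Theorem~\ref{thm_wall_crossing}; and, since $U$ is a strict fundamental domain for the $\Br(\sD)$-action, distinct translates meet only along boundary walls. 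Freeness and proper discontinuity of the $G$-action then yield that $\bar\pi$ is a regular covering with deck group exactly $\Z[2]\times\Br(\sD)$, inducing a homeomorphism $\Stab^\dagger(\sD)/G\xrightarrow{\sim}X_{reg}/\tilde W$.

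The main obstacle is the tiling in the last step: verifying that the translates of $U$ exhaust $\Stab^\dagger(\sD)$ with no gaps and no identifications beyond those coming from $G$. In contrast with the finite Kleinian case, the elliptic root system carries imaginary roots, so $X_{reg}$ removes only the real-root hyperplanes; I must check that crossing the imaginary-root loci, across which no reflection of $\tilde W$ acts, produces no boundary wall of $U$ unaccounted for by a spherical twist, and that the families of stable objects of imaginary class supplied by Theorem~\ref{thm_wall_crossing} vary compatibly with remaining inside $\Stab^\dagger(\sD)$. Concretely, the heart of the proof is to enumerate the walls bounding $U$ and to show each is glued to an adjacent chamber by a single spherical twist, so that no deck transformation beyond $\Z[2]\times\Br(\sD)$ is required.
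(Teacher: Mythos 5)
Your skeleton is the same as the paper's: realize $\bar\pi$ as the central charge map followed by the quotient, show the image is $X_{reg}$, and use the fundamental domain $U\simeq D$ together with the action of $\Z[2]\times\Br(\sD)$ to identify the quotient of $\Stab^\dagger(\sD)$ with $X_{reg}/\tilde W$. But the two steps that carry the actual mathematical weight are left unproved. The first is the tiling claim, which you yourself flag as ``the main obstacle'' and ``the heart of the proof'' without resolving it. In the paper this is exactly the content of the boundary description of $D$ in Section \ref{sec_fund-dom-and-boundary} (the walls $W_{v,\pm}$ and $Y_{(i,j),\pm}$), of Lemma \ref{lem_Arend_concern}, and of the two lemmas showing that a boundary point of $U$ on a wall $\tilde W_{v,\pm}$ (resp.\ $\tilde Y_{(i,j),\pm}$) is carried back to $\overline U$ by $\Phi_{S_m}^{\pm1}$ (resp.\ by $\rho_{(i,j)}^{\pm1}$); those lemmas require the classification of objects of $\sA$ (Proposition \ref{prop_classification-objects-A}) and a case-by-case computation of how the twists act on line bundles, clusters, the $t_i^j$, and the objects $N[-1]$. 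Note also that your worry about imaginary roots is resolved not by wall-gluing but by condition $(\ast)$: the locus $\im(Z(b)/Z(a))=0$ is excluded from $\Stab^\dagger(\sD)$ by definition, and $X_{reg}$ is \emph{not} merely the complement of the real-root hyperplanes in $\Hom(K(\sD),\C)$ --- it also imposes $\im(Z(b)/Z(a))>0$, so your containment argument needs both halves.

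The second missing ingredient is freeness of the $\Br(\sD)$-action, which you assert but do not prove; without it the deck group is only a quotient of $\Z[2]\times\Br(\sD)$. The paper reduces to $\sigma\in U$, deduces $[\Phi]=\id$ on $K(\sD)$ and hence $\Phi(S_m)\simeq S_m$ for all $m$, and then invokes Lemma \ref{lem_Phi(S_m)=S_m} --- a genuinely nontrivial categorical statement, proved by transporting the problem to the crepant resolution, showing $\Phi$ fixes all skyscrapers and clusters, and applying the standard characterization of autoequivalences sending points to points. Finally, one local claim in your third paragraph is false as stated: $\bar\pi|_U$ is \emph{not} a homeomorphism onto $X_{reg}/\tilde W$, since $U\simeq D$ lies in the normalized slice $\mathbb E=\{Z(a)=1\}$ while $X_{reg}\simeq\C^*\times X^N_{reg}$ has an extra $\C^*$ factor; one must first sweep that factor with the full $\C$-action (of which only $2\Z$ acts trivially downstairs), which is why the paper's final argument passes through Corollary \ref{cor_fund_domain_br_C} and produces a pair $(k,\Phi)\in\C\times\Br(\sD)$ before concluding $k\in2\Z$ and $[\Phi]=\id$ from the homeomorphism $U\simeq D$.
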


Let $\Aut^\dagger(\sD)\subset \Aut(\sD)$ be the subgroup of autoequivalences preserving the component $\Stab_n^\dagger(\sD)$. Write $\Aut^\dagger_*(\sD)$ for the quotient of $\Aut^\dagger(\sD)$ by the subgroup of autoequivalences which act trivially on $\Stab_n^\dagger(\sD)$. We also show, in analogy with \cite[Cor. 1.4]{Bri09_kleinian}:

\begin{corollary}[ = \ref{cor_autoequivalences} ]
There is an isomorphism
$$\Aut^\dagger_*(\sD) \simeq \Br(\sD) \rtimes \Aut(\Gamma), $$
Where $Aut(\Gamma)$ acts on $\Br(\sD)$ by permuting the generators. 
\end{corollary}

Observe that, unlike in \cite{Bri09_kleinian}, the shift functor does not belong to $\Aut^\dagger(\sD)$, since it does not preserve $\Stab^\dagger_n(\sD)$.

\subsection*{Remarks and further problems} 

\begin{remark}[Representation theory]
From the point of view of representation theory, the categories $\sD$ discussed here are equivalent to the CY-2 completions of Ringel's canonical algebras (see \cite{STW16}). 
\end{remark}

\begin{remark}[Mirror symmetry]\label{rmk_mirrsymm}
Theorem \ref{thm_thm1.1} can be interpreted as an istance of the same principle outlined in \cite{Bri09_spaces} for elliptic curves.
 
The general automorphism group of an elliptic curve $E$ is $\Z/2\Z$, generated by the involution $\iota$. 
Over the field of complex numbers, there are only two possibilities for special automorphism groups, namely $\Z/4\Z$ and $\Z/6\Z$. These give rise to three possible quotients: $\pr {1}_{3,3,3}$, $\pr 1_{4,4,2}$ and $\pr 1_{6,3,2}$, whose mirror partners are the \emph{simple elliptic singularities} $E_6^{(1,1)},E_7^{(1,1)}$, and $E_8^{(1,1)}$ \cite{KS11},\cite{MR11}. To these singularities, Saito associates a \emph{universal unfolding space} and an elliptic root system \cite{EARSI}. If $X$ is one of these quotients, a hyperbolic extension of $\mathsf{X}_{\mathrm{reg}}/ W$ is the universal unfolding of the mirror elliptic singularity. Thus, Theorem \ref{thm_thm1.1} details the relation between the unfolding spaces and the stability manifold and gives a partial answer to Conjecture 1.3 in \cite{STW16}.

The automorphism group of a general elliptic curve $E$ is generated by its involution $\iota$. Theorems \ref{thm_wall_crossing}  and \ref{thm_thm1.1} hold for $X=[E/\iota]$, however, a mirror-symmetric interpretation seems less clear in this case.

\end{remark}

As in \cite{Bri08_k3}, \cite{Bri09_kleinian}, we expect the following properties:
\begin{conjecture}\label{conj_usual_conjectures}
\begin{enumerate}[(i)]
    \item The space $\Stab(\sD)$ is connected, so that $\Stab(\sD)=\Stab^\dagger(\sD)$;
    \item the space $\Stab_n(\sD)$ is simply connected. This would also show that the Artin group $G_W\simeq\pi_1(\mathsf{X}_{\mathrm{reg}}/W)$ (see Proposition \ref{prop_FundamentalGroupGW}) is isomorphic to $\Br(\sD)$.
\end{enumerate}
\end{conjecture}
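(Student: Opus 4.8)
We outline a possible approach to Conjecture~\ref{conj_usual_conjectures}, treating connectedness and simple connectedness by rather different arguments. For part (i) the plan is to adapt the strategy of Bridgeland in the Kleinian case \cite{Bri09_kleinian} and rule out connected components of $\Stab(\sD)$ disjoint from $\Stab^\dagger(\sD)$. Given an arbitrary $\sigma=(Z,\PP)\in\Stab(\sD)$, one first passes to a finite-length heart $\sA_\sigma$ and applies Theorem~\ref{thm_wall_crossing} to produce, for each real root $\alpha$, a $\sigma$-stable spherical object of class $\alpha$ whenever $\sigma$ is generic for $\alpha$. The aim is then to show that the simple objects of $\sA_\sigma$ can be matched, after applying a suitable element of $\Br(\sD)$, with the images of the simple objects generating the standard heart $\sA\subset\sD$ pulled back from $\Coh(S)\cap\sD'$ under the McKay equivalence. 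Each wall-crossing and each tilt is realized by a path in $\Stab(\sD)$, and since $\Br(\sD)$ preserves $\Stab^\dagger(\sD)$, concatenating these paths would connect $\sigma$ to the known component and force $\Stab(\sD)=\Stab^\dagger(\sD)$.

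The main obstacle is precisely the dichotomy recorded in Theorem~\ref{thm_wall_crossing}: the elliptic root lattice contains imaginary roots, whose stable objects move in positive-dimensional families. In the finite-type situation of \cite{Bri09_kleinian}, and more generally in the affine case of \cite{Ike14}, the controlled growth of the root system makes the combinatorics of finite-length hearts manageable, so that every bounded t-structure is reached from the standard one by a sequence of tilts; this is what forces connectedness. Here one must instead control stability conditions whose central charge degenerates along the rank-two radical of the Euler form, spanned by the null class $\delta$ and a transverse generator, and in particular exclude hearts in which no real-root spherical object is stable. This is the same phenomenon that keeps the analogous statement open for K3 surfaces \cite{Bri08_k3}; I expect the elliptic case to be more tractable only insofar as the radical has rank two and the root system is completely explicit \cite{EARSI}, which should constrain the admissible slopes along $\delta$ and thereby the finite-length hearts.

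For part (ii) I would argue conditionally on (i). Granting $\Stab(\sD)=\Stab^\dagger(\sD)$, Theorem~\ref{thm_thm1.1} provides a regular covering $\bar\pi\colon\Stab^\dagger(\sD)\to X_{reg}/\tilde W$ with deck group $\Z[2]\times\Br(\sD)$, so $\bar\pi$ is the universal covering exactly when the natural surjection $\pi_1(X_{reg}/\tilde W)\twoheadrightarrow\Z[2]\times\Br(\sD)$ is an isomorphism. The plan is to compute the fundamental group of the regular orbit space directly: by Saito's description of the elliptic root system \cite{EARSI}, together with the results of Looijenga and van der Lek on complements of elliptic discriminants, $\pi_1(X_{reg}/\tilde W)$ is the elliptic Artin group of the relevant type, and one verifies that its standard presentation matches $\Z[2]\times\Br(\sD)$, with the central factor $\Z[2]$ realized by the shift $[2]$ and the Artin generators by the spherical twists $\Phi_S$. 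Since elliptic Artin groups are residually finite, hence Hopfian, an isomorphism $\pi_1(X_{reg}/\tilde W)\cong\Z[2]\times\Br(\sD)$ forces the surjection above to be injective, so that $\bar\pi$ is the universal cover and $\Stab(\sD)$ is simply connected; the final identification of $\pi_1(X_{reg}/\tilde W)$ then follows as stated.

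The hard part will be (i): the deformation-and-tilting argument requires a classification of the finite-length hearts of $\sD$ up to the $\Br(\sD)$-action, which in the presence of imaginary roots is not forced by finite combinatorics. A secondary difficulty, entering (ii), is matching the elliptic Artin relations attached to the null-direction node with the relations among the spherical twists, which requires an independent proof that the $\Br(\sD)$-action is faithful and introduces no further relations.
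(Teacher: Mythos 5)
First, a point of order: the statement you were asked to prove is stated in the paper as Conjecture~\ref{conj_usual_conjectures} and is left \emph{open} there. The paper offers no proof --- it only remarks that progress in related frameworks can be found in \cite{Ike14}. So there is no argument in the paper to compare yours against; the only question is whether your proposal closes the conjecture, and it does not: as you yourself acknowledge, it is a research program whose two decisive steps remain open.

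Two steps would fail as written. For (i), your opening move --- ``one first passes to a finite-length heart $\sA_\sigma$'' --- is not available in this setting: the distinguished heart $\sA$ is Noetherian (Lemma~\ref{lem_A_Noeth}) but very far from Artinian, since it contains all line bundles on $X$ and continuous families of skyscrapers and clusters (Prop.~\ref{prop_classification-objects-A}), and a general $\sigma\in\Stab(\sD)$ has no reason to admit a finite-length heart at all. This is precisely where the tilting combinatorics of \cite{Bri09_kleinian} and \cite{Ike14} break down: there the relevant hearts are categories of nilpotent modules, which are of finite length, so hearts can be classified up to the braid action by finite combinatorics; here the imaginary roots produce positive-dimensional families of stable objects, so the induction has no engine --- you flag this, but it means the path from an arbitrary $\sigma$ to $\Stab^\dagger(\sD)$ is missing its key ingredient, not merely a technical refinement. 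For (ii), even granting (i), your argument is circular at the decisive point. The paper already computes $\pi_1(X_{reg}/W)\simeq \Z[\eta]\times G_W$ via \cite{vdL83}, so the entire content of (ii) is whether the natural surjection $G_W\twoheadrightarrow \Br(\sD)$, sending Artin generators to spherical twists, is injective --- i.e.\ whether the twists satisfy no relations beyond the elliptic Artin relations. Your plan to ``verify that the standard presentation matches $\Z[2]\times\Br(\sD)$'' presupposes exactly this faithfulness, which is the open problem (and which you defer to ``an independent proof'' in your closing paragraph). Moreover, the auxiliary claim that elliptic Artin groups are residually finite is itself unestablished --- residual finiteness of Artin groups is known only for special classes, essentially those embedding in finite-type ones --- so the Hopfian shortcut also rests on an unproven input, even though the Hopfian deduction itself would be logically sound if both inputs were granted. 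In short: a sensible and honestly hedged outline, correctly identifying the obstructions, but both (i) and (ii) remain open for the reasons above, consistent with the paper's decision to state this as a conjecture rather than a theorem.
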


See \cite{Ike14} and references therein for progress on Conjecture \ref{conj_usual_conjectures} in related frameworks.

\subsection*{Structure of the paper}
Section \ref{sec_StabCond} contains preliminaries on Bridgeland stability conditions, and Section \ref{sec_ell_root_syst} recalls the main aspects of the theory of elliptic root systems. In Section \ref{sec_tri_cat_loc_ell_quot}, we introduce the triangulated category $\sD$ (\ref{ssec_root_system_associated_to_D}) construct the heart $\sA_R$ (\ref{ssec_PerverseSheaves}), classify its objects (\ref{ssec_ClassificationA}) and use it to construct $U$ (\ref{ssec_FundRegionU}). Section \ref{sec_WallCrossing} contains our wall-crossing result, and in Section \ref{sec_stabCondOnD} we prove the main result.

\subsection*{Conventions}
We work over the field $\C$ of complex numbers. All abelian and triangulated categories are assumed to be $\C$-linear. Given a graph $\Gamma$, we write $\abs{\Gamma}$ to denote the set of its vertices.

\subsection*{Acknowledgements}
I wish to thank my doctoral advisor, Aaron Bertram, for his guidance and enthusiasm in suggesting this problem. I am grateful to Bronson Lim and Huachen Chen for our fruitful discussions, and to Arend Bayer for his helpful comments on a preliminary version of this work. I thank Michael Wemyss for his advice, and also for his help with Lemma \ref{lem_Phi(S_m)=S_m}.

\section{Stability conditions}\label{sec_StabCond}

Stability conditions on triangulated categories were first introduced  by Bridgeland  and were inspired by work of Douglas on string theory (see \cite{Bri07_triang_cat} and references therein). We recall here the definition and basic properties of stability conditions and the stability manifold. We refer the interested reader to the seminal work of Bridgeland \cite{Bri07_triang_cat}, \cite{Bri08_k3} and to the surveys \cite{Huy14}, \cite{MS17}.

In what follows, $\mathbb{T}$ is a triangulated category, with Grothendieck group $K(\mathbb{T})$.

\begin{definition}
A \emph{slicing} of $\mathbb{T}$ is a collection $\PP=\set{\PP(\phi)}_{\phi\in\R}$ of full additive subcategories of $\mathbb{T}$ satisfying the following properties:
\begin{enumerate}[(i)]
\item $\Hom(\PP(\phi_1),\PP(\phi_2))=0$ for $\phi_1<\phi_2$;
\item for all $E\in \mathbb{T}$ there are real numbers $\phi_1>...>\phi_m$, objects $E_i\in \mathbb{T}$ and a collection of triangles 
\begin{equation*}
\begin{tikzcd}[column sep=1.1em]
0=E_0 \arrow{rr}  & & E_1 \arrow[dl]\arrow{rr}  & & E_2 \arrow[dl] \arrow{rr}  & & ...\arrow{rr}  & & E_{m-1}\arrow{rr}  & & E_m=E\arrow[dl] \\
 & A_1 \arrow[ul, dashed] & & A_2 \arrow[ul, dashed] & & & &  & & A_m \arrow[ul, dashed] &
\end{tikzcd}
\end{equation*}
where $A_i\in\PP(\phi_i)$; 
\item $\PP(\phi)[1]=\PP(\phi+1)$.
\end{enumerate}
\end{definition}

The extremes $\phi_1$ and $\phi_m$ are denoted $\phi^+(E)$ and $\phi^-(E)$ respectively. Given a slicing $\PP$, for $\alpha\leq\beta\in\R$ we denote by $\PP((\alpha,\beta))$ the extension closure of the subcategories $\set{\PP(\phi)\,\colon\, \phi\in (\alpha,\beta)}$ (similar definitions work for other intervals in $\R$).

\begin{definition}\label{def_stability condition}
A \emph{stability condition} on $\mathbb{T}$ is a pair $\sigma=(Z,\PP)$ where:
\begin{enumerate}[(i)]
\item $\PP$ is a slicing of $\mathbb{T}$;
\item $Z\colon K(\mathbb{T}) \to \C$ is an additive homomorphism called the \emph{central charge};
\end{enumerate}
and they satisfy the following properties:
\begin{enumerate}
\item For any non-zero $E\in \PP(\phi)$, $$ Z([E])\in \R_{>0} \cdot e^{i\pi\phi}; $$
\item (Support property) Fix any norm $\norm{\cdot}$ on $K(\mathbb{T})$. Then we require $$\inf\left\lbrace \dfrac{\abs{Z([E])}}{\norm{[E]}} \,\colon\, 0\neq E\in\PP(\phi),\,\phi\in\R \right\rbrace >0. $$
\end{enumerate}
\end{definition}

Given a stability condition $\sigma=(Z,\PP)$, we'll refer to $\PP((0,1])$ as to the \emph{heart} associated to $\sigma$. In fact, $\PP((\alpha,\alpha +1])$ is always the heart of a bounded $t$-structure for all $\alpha\in\R$, and it's an abelian category. 

If $E\in \PP((\alpha,\alpha +1])$ for some $\alpha\in\R$, then we say that $E$ has \emph{phase} $\phi$ if $Z([E])\in \R_{>0} \cdot e^{i\pi\phi}$, for $\phi\in(\alpha,\alpha+1]$. The nonzero objects of $\PP(\phi)$ are said to be $\sigma$\emph{-semistable} of phase $\phi$, and the simple objects of $\PP(\phi)$ are said to be $\sigma$\emph{-stable}.

For the general theory about bounded $t$-structures, we refer the reader to \cite{BBD82}, here we only recall the following lemma, which will be useful in what follows.

\begin{lemma}\label{lem_HeartsContainEachOther}
Let $\sA,\sB\subset \mathbb{T}$ be hearts of bounded t-structures on a triangulated category $\mathbb{T}$. If $\sA\subset\sB$,
then $\sA = \sB$.
\end{lemma}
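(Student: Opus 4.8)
The plan is to probe an arbitrary object of $\sB$ with the cohomology functors of the bounded $t$-structure whose heart is $\sA$, and to use the hypothesis $\sA\subset\sB$ to land all of the resulting cohomology objects back inside $\sB$, where the heart-vanishing $\Hom(\sB,\sB[k])=0$ for $k<0$ is available. Since the inclusion $\sA\subset\sB$ is given, it suffices to prove the reverse inclusion $\sB\subset\sA$.

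Write $H^i$ and $\tau_{\le n},\tau_{\ge n}$ for the cohomology and truncation functors of the $t$-structure with heart $\sA$. Fix a nonzero object $B\in\sB$. Boundedness of the $t$-structure guarantees that $\set{i\in\Z \st H^i(B)\ne 0}$ is finite and nonempty; let $m$ and $M$ denote its minimum and maximum, so that $m\le M$. I claim $m=M=0$, which immediately gives $B\simeq H^0(B)\in\sA$ and hence $\sB\subset\sA$.

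For the upper bound, maximality of $M$ identifies $\tau_{\ge M}B\simeq H^M(B)[-M]$, and by the standing hypothesis $H^M(B)\in\sA\subset\sB$. The canonical morphism $B\to\tau_{\ge M}B$ is an element of $\Hom(B,H^M(B)[-M])$; if $M>0$ this group vanishes, since both $B$ and $H^M(B)$ lie in $\sB$ and $\Hom(\sB,\sB[-M])=0$. A distinguished triangle one of whose edges vanishes splits, so the vanishing of $B\to\tau_{\ge M}B$ exhibits $H^M(B)[-M]$ as a direct summand of $(\tau_{\le M-1}B)[1]$. Comparing cohomological degrees --- the former is concentrated in degree $M$, the latter in degrees $\le M-2$ --- forces $H^M(B)=0$, contradicting maximality. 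Hence $M\le 0$. A symmetric argument, using the morphism $\tau_{\le m}B\simeq H^m(B)[-m]\to B$ and the vanishing of $\Hom(H^m(B),B[m])$ for $m<0$, yields $m\ge 0$. Together with $m\le M$ this forces $m=M=0$.

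The single delicate point is the passage from ``the truncation morphism is zero'' to ``the extremal cohomology object is zero'': here one must invoke the standard splitting of a distinguished triangle with a vanishing edge and then check that the resulting direct-summand relation is impossible because the two objects are supported in disjoint ranges of cohomological degree. This is exactly where boundedness (to make $m$ and $M$ well defined) and the explicit identification of the extremal truncations $\tau_{\ge M}B$ and $\tau_{\le m}B$ with shifts of single cohomology objects are used; the inclusion $\sA\subset\sB$ enters only to make the heart-vanishing in $\sB$ applicable to these objects. All remaining steps are formal manipulations of truncation triangles.
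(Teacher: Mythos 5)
Your proof is correct, and it is precisely the standard argument that the paper's one-line proof (``a consequence of the definition of bounded $t$-structure'') is alluding to: truncate $B\in\sB$ with respect to the $\sA$-$t$-structure, use $\sA\subset\sB$ to place the extremal cohomology object $H^M(B)$ in $\sB$ so that $\Hom(B,H^M(B)[-M])=0$ for $M>0$, and then split the truncation triangle to force $H^M(B)=0$. The delicate step you flag — passing from the vanishing of the edge $B\to\tau_{\ge M}B$ to the splitting $(\tau_{\le M-1}B)[1]\simeq \tau_{\ge M}B\oplus B[1]$ and then comparing cohomological supports via additivity of $H^i$ — checks out, as does the symmetric argument for $m\ge 0$.
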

\begin{proof}
This is \cite[Ex. 5.6]{MS17}.
\end{proof}

\begin{remark}[{\cite[Prop. 5.3]{Bri07_triang_cat}}] To construct stability conditions it is often convenient to use an alternative definition. In fact, sometimes we will write a stability condition as a pair $\sigma=(Z,\sA)$, where $\sA$ is the heart of a bounded $t$-structure and $Z$ is a \emph{stability function} satisfying Harder-Narasimhan and support property. A stability function is a linear map $Z\colon K(\sA)\to \C$ such that any non-zero $E\in \sA$ satisfies $ Z([E])\in \R_{>0} \cdot e^{i\pi\phi}$ with $\phi\in(0,1]$. Then one defines $\phi$ to be the phase of $E$, and declares $E$ to be $\sigma$-(semi)stable if for all non-zero subobjects $F\in\sA$ of $E$, $\phi(F)<(\leq) \phi(E)$. We say that $Z$ satisfies the HN property if for every $E\in\sA$ there is a unique filtration
$$ 0=E_0\subset E_1\subset ... \subset E_{n-1}\subset E_n=E $$
such that the quotients $E_i/E_{i-1}$ are $\sigma$-semistable of phases $\phi_i=\phi(E_i/E_{i-1})$, $\phi_1>\phi_2>...>\phi_n$. The support property is the same as in Definition \ref{def_stability condition}.
To recover a slicing as in Definition \ref{def_stability condition}, set $\PP(\phi)$ to be the category of $\sigma$-semistable objects of phase $\phi$ for $\phi\in (0,1]$, and declare $\PP(\phi)=\PP(\phi + n)$ for all $n\in \Z$.
\end{remark}

The following proposition is a useful tool to check the Harder-Narasimhan property:

\begin{proposition}[{\cite[Prop. 4.10]{MS17}}]\label{prop_BriHNproperty}
Suppose $\sA$ is an abelian category, and $Z\colon K(\sA)\to \C$ is a stability function. If
\begin{enumerate}[(i)]
\item the category $\sA$ is noetherian, and
\item the image of $\im Z$ is discrete in $\R$,
\end{enumerate}
then $Z$ has the Harder-Narasimhan property.
\end{proposition}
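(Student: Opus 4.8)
The plan is to verify the two chain conditions in Bridgeland's criterion for the existence of Harder--Narasimhan filtrations (\cite[Prop.~2.4]{Bri07_triang_cat}): namely that there is no infinite chain of subobjects $\cdots\subset E_{j+1}\subset E_j\subset\cdots$ with strictly increasing phases, and no infinite chain of quotients $E_j\twoheadrightarrow E_{j+1}$ with strictly decreasing phases. Granting these, the usual argument extracts from each object its maximal destabilizing subobject and then induces the filtration. The whole argument rests on one elementary observation: since $Z$ is a stability function, every nonzero object has phase in $(0,1]$, so $\im Z([E])\geq 0$ for all $E\in\sA$; as $\im Z$ is additive on short exact sequences, it is nonnegative and monotone under passage to subobjects, i.e.\ $\im Z([A])\leq \im Z([E])$ whenever $A\subset E$. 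I will also repeatedly use the seesaw property: for $0\to A\to E\to B\to 0$ the phase $\phi(E)$ lies weakly between $\phi(A)$ and $\phi(B)$, with $\phi(A)\leq\phi(E)\leq\phi(B)$ precisely when $\phi(A)\leq\phi(B)$, which is immediate from $Z([E])=Z([A])+Z([B])$ and the location of these vectors in the closed upper half-plane.

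First I would deduce the descending chain condition (increasing phases) from hypothesis (ii). Given such a chain $\cdots\subset A_{j+1}\subset A_j\subset\cdots$, monotonicity makes $\im Z([A_j])$ a non-increasing sequence of nonnegative reals valued in the discrete set $\im Z$; being non-increasing and bounded below, it must stabilize, say $\im Z([A_j])=c$ for $j\geq N$. For $j\geq N$ each quotient $A_j/A_{j+1}$ then satisfies $\im Z=0$, hence is either zero or of phase $1$. If $A_j/A_{j+1}\neq 0$, the seesaw property applied to $0\to A_{j+1}\to A_j\to A_j/A_{j+1}\to 0$, together with $\phi(A_j/A_{j+1})=1\geq\phi(A_{j+1})$, forces $\phi(A_{j+1})\leq\phi(A_j)$, contradicting the increasing-phase assumption; hence $A_j=A_{j+1}$, contradicting strictness. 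Thus no such infinite chain exists.

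Next I would deduce the chain condition on quotients (decreasing phases) from hypothesis (i). Given $E_1\twoheadrightarrow E_2\twoheadrightarrow\cdots$ with $\phi(E_j)>\phi(E_{j+1})$, set $K_j\coloneqq\ker(E_1\twoheadrightarrow E_j)$, an ascending chain of subobjects of $E_1$. Since strictly decreasing phases force each surjection $E_j\to E_{j+1}$ to be a non-isomorphism, each inclusion $K_j\subsetneq K_{j+1}$ is strict; noetherianity of $\sA$ forbids an infinite such chain, giving the contradiction.

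The main obstacle is the combinatorial passage from these chain conditions to an actual HN filtration, i.e.\ the construction of the maximal destabilizing subobject. Here one must show that the supremum of phases of subobjects of $E$ is attained, and that there is a largest subobject realizing it, using closure of the "phase $\geq\psi$" condition under sums of subobjects (argued via the real functional $w\mapsto\im(e^{-i\pi\psi}w)$ and the seesaw property) together with noetherianity to replace an approximating sequence by a single subobject; the first chain condition then guarantees this maximal destabilizer is itself semistable, and the second guarantees that iterating on the quotient terminates. This is the delicate part, but it is exactly the content of \cite[Prop.~2.4]{Bri07_triang_cat} (equivalently \cite[Prop.~4.10]{MS17}), so I would either cite it directly or reproduce its maximal-destabilizer argument verbatim once the two chain conditions above are in hand.
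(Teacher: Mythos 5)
Your argument is correct, and it coincides with the proof the paper is implicitly relying on: the paper gives no proof of its own, deferring entirely to \cite[Prop.~4.10]{MS17}, and your reduction to Bridgeland's two chain conditions --- the descending (increasing-phase) condition extracted from discreteness of $\im Z$ via the eventual vanishing of $\im Z$ on the successive quotients, and the ascending (quotient) condition from noetherianity via the chain of kernels --- is exactly the standard argument behind that citation. One small point worth making explicit: a non-increasing, bounded-below sequence in an arbitrary discrete subset of $\R$ need not stabilize (e.g.\ $\{1/n\}$), but here it does because the image of $\im Z$ is a discrete \emph{subgroup} of $\R$, hence of the form $c\Z$, so that its intersection with $[0,\im Z([E_1])]$ is finite.
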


\subsection{The Stability manifold}

Let $\Stab(\mathbb{T})$ denote the set of stability conditions on $\mathbb{T}$. In \cite[Sec. 6]{Bri07_triang_cat}, Bridgeland shows that the function
\begin{equation}\label{eq_SlicingMetric}
f(\sigma,\tau)=\sup_{0\neq E\in \mathbb{T}}\set{\abs{\phi^+_\sigma(E)-\phi^+_\tau(E)},\abs{\phi^-_\sigma(E)-\phi^-_\tau(E)}}
\end{equation}
determines a generalized metric on $\Stab(\mathbb{T})$ which makes it into a topological space. Moreover, the central charge map $\pi\colon\Stab(\mathbb{T})\to \Hom(K(\mathbb{T}),\C)$ given by $(Z,\PP)\mapsto Z$ is a local homeomorphism, and it makes $\Stab(\mathbb{T})$ into a complex manifold of dimension $\rk(K(\mathbb{T}))$\cite[Thm. 1.2]{Bri07_triang_cat}.
The following lemma which will be useful later:

\begin{lemma}[{\cite[Lemma 6.4]{Bri07_triang_cat}}]\label{lem_BriCloseAreEqual}
Let $\sigma$, $\tau\in\Stab(\mathbb{T})$ be stability conditions with $\pi(\sigma)=\pi(\tau)$. If $f(\sigma,\tau)<1$, then $\sigma=\tau$.
\end{lemma}

Next, we recall two group actions on $\Stab(\mathbb{T})$. The additive group $\C$ acts as follows: for $z=x+iy\in \C$, define $z \cdot (Z,\PP) = (Z',\PP')$, with
\begin{equation}
\label{eq_C_action}
Z'(-) = e^{-z}Z(-), \qquad \PP'(\phi)=\PP\left(\phi+\frac{y}{\pi}\right). 
\end{equation}

The group of autoequivalences $\Aut(\mathbb{T})$ also acts on $\Stab(\sD)$: for $\Phi\in \Aut(\sD)$ and $\sigma=(Z,\PP)\in \Stab(\sD)$, define $\Phi\cdot (Z,\PP)=(Z',\PP')$ as the stability condition with
\begin{equation}
\label{eq_Aut_Action}
Z'(E)\coloneqq Z(\Phi^{-1} (E)) \;\text{ and }\; \PP'(\phi)\coloneqq \Phi\PP(\phi).
\end{equation}

\subsection{Torsion pairs and tilts of abelian categories}

Next, we recall the definition of a \emph{tilt} of an abelian category $\sA$, which is a technique to produce new abelian subcategories of $D^b(\sA)$. Indeed, the tilt of a heart of a bounded t-structure is a new heart in $D^b(\sA)$ \cite{HRS96}.

\begin{definition}
Let $\mathcal A$ be an abelian category. A \emph{torsion pair} for $\mathcal A$ is a pair of full subcategories $(\sT,\sF)$ such that:
\begin{enumerate}[(i)]
\item $\HOM{\sT}{\sF}{}=0$;
\item for any $E\in \mathcal A$ there exists a short exact sequence
$$ 0\to T \to E \to F \to 0 $$
where $T\in \sT$ and $F\in \sF$. 
\end{enumerate}
\end{definition}

Given a torsion pair $(\sT,\sF)$ on an abelian category $\sA$, we define $\sA^\sharp=\pair{\sF[1],\sT}$ to be the \emph{extension closure} of $\sF[1]$ and $\sT$, i.e. smallest full subcategory of $D^b(\sA)$ containing $\sF[1]$ and $\sT$ closed under extensions. $\sA^\sharp$ is called the \emph{tilt} of $\sA$ along the torsion pair $(\sT,\sF)$. Sometimes we will also refer to $\sA^\sharp[-1]=\pair{\sF,\sT[-1]}$ as to the tilt, but no confusion should arise.

\section{Elliptic root systems}\label{sec_ell_root_syst}

In this section we introduce elliptic root systems and recall some of their properties. Elliptic root systems were introduced by Saito \cite{EARSI,EARSII}, in our exposition we draw also from \cite{STW16} and \cite{IY17}.  

\begin{definition}[{\cite[Def. 1]{EARSI}}]\label{def_gen_root_syst}
Let $F$ be a real vector space of rank $l+2$, equipped with a positive semidefinite symmetric bilinear form $I\colon F\times F \to F$, whose radical $\rad I$ has rank 2. An \emph{elliptic root system} associated to $(F,I)$ is a subset $R\subset F$ of non-isotropic elements such that:

\begin{enumerate}
    \item the additive group generated by $R$, denoted $Q(R)$, is a full sublattice of $F$. That is, the embedding $Q(R)\subset F$ induces an isomorphism $Q(R)_\R\simeq F$;
    \item the form $I$ takes integer values on  $R\times R$;
    \item For all $\alpha$ in $R$, the reflection
    $$ w_\alpha (x) = x-I(x,\alpha)\alpha \,\text{ for }\,x\in F$$
    satisfies $w_\alpha(R)=R$;
    \item if $R=R_1\cup R_2$ with $R_1 \perp R_2$, then either $R_1$ or $R_2$ is empty.
\end{enumerate}
\end{definition}

The subgroup $W$ of $\Aut(F,I)$ generated by the $w_\alpha$ for $\alpha\in R$ is called the \emph{Weyl group} of the root system $R$. The lattice $\rad I \cap Q(R)$ is full in the two-dimensional vector space $\rad I$. A \emph{marking} of $R$ is the choice of a 1-dimensional subspace $G\subset \rad I$, and $(R,G)$ is called a \emph{marked elliptic root system}.

To a marked elliptic root system we can associate an affine root system $R_a$ and a finite root system $R_f$ of rank $l$ by considering the quotients 
\begin{align*}
F_a\coloneqq F/G & & R_a\coloneqq R/R\cap G\\
F_f\coloneqq F/\rad I  & & R_f\coloneqq R/R\cap \rad I
\end{align*}
and the bilinear forms induced on $F_f$ and $F_a$ by $I$. 

Now fix a marked root system $(R,G)$, with generators $a,b$ for $\rad I\cap Q(R)$ and $G=\R a$. 

\begin{proposition}[{\cite[Cor. 2.3]{IY17}}]
The root system $R$ is given by
$$ R=\set{\alpha_f + mb + na \st \alpha_f\in R_f, m,n\in \Z}. $$
\end{proposition}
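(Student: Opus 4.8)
The plan is to separate the two null directions of $\rad I$ one at a time. Write $p\colon F\to F/G$ and $q\colon F\to F/\rad I$ for the two projections, so that by construction $p(R)=R_a$ is affine and $q(R)=R_f$ is finite. Since the reflections $w_\alpha$ are isometries, the identity $w_\alpha(\alpha)=(1-I(\alpha,\alpha))\alpha$ forces $I(\alpha,\alpha)=2$ for every root, so $R_a$ is reduced and simply laced. I would then take as input the classical structure theorem for affine root systems applied to $R_a$, namely
$$ R_a=\set{\alpha_f+m\bar b \st \alpha_f\in R_f,\ m\in\Z}, $$
where $\bar b=p(b)$ spans the rank-one radical of the form induced on $F/G$ and $R_f$ is identified with a chosen section. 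This reduces the problem to understanding the fibres of $p\colon R\to R_a$.

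For the inclusion of $R$ into the right-hand side, fix for each finite root a lift to an actual root $\tilde\alpha_f\in R$, which is possible because $q(R)=R_f$. If $\alpha\in R$ then $q(\alpha)\in R_f$, and $\alpha-\tilde\alpha_{q(\alpha)}$ lies in $\rad I\cap Q(R)$. The admissible-frame hypothesis $Q(R)\cap\rad I=\Z a\oplus\Z b$ then forces $\alpha=\tilde\alpha_f+mb+na$ with $m,n\in\Z$, which is the desired form.

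For the reverse inclusion I would exploit the following device: if $\alpha,\beta\in R$ differ by an element $\delta:=\beta-\alpha\in\rad I$, then because $I(\,\cdot\,,\delta)=0$ the composite $w_\beta w_\alpha$ acts on $F$ as the transvection $x\mapsto x+I(x,\alpha)\,\delta$; it lies in $W$ and hence preserves $R$ by axiom (3). Choosing a root $\gamma$ with $I(\gamma,\alpha)=\pm1$ (available in the simply-laced case) and iterating, the mere existence of one pair of roots differing by $a$ (resp. by $b$) forces the full string $\gamma+\Z a\subset R$ (resp. $\gamma+\Z b\subset R$); the irreducibility axiom (4), which makes the finite Dynkin diagram connected, lets this translation invariance spread from one fibre of $q$ to all of them. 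Together with the inclusion above and the surjectivity $q(R)=R_f$, this yields $R=\set{\alpha_f+mb+na\st\alpha_f\in R_f,\ m,n\in\Z}$.

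The main obstacle is producing the two seed pairs of roots differing by exactly $a$ and by exactly $b$. The $\bar b$-strings of the affine system $R_a$ only give pairs differing by $b$ modulo $\R a$, i.e. by $b+ca$ for some integer $c$; symmetrically the $\R b$-quotient furnishes pairs differing by $a+c'b$. Disentangling these into pure $a$- and $b$-translations — equivalently, checking that the transvections above generate the full lattice $\Z a\oplus\Z b$ rather than a proper sublattice — is exactly the point where the rank-two radical and the primitivity built into the signed marking $a$ must be used. For the finitely many systems $E_6^{(1,1)}$, $E_7^{(1,1)}$, $E_8^{(1,1)}$ (and $D_4^{(1,1)}$) one may instead verify both seeds, and hence the whole statement, directly on Saito's explicit root basis $\set{\alpha_{-1},\alpha_0,\dots,\alpha_l}$.
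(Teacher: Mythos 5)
The paper does not prove this proposition at all: it is quoted from Saito \cite{EARSI}, so there is no internal argument to compare against. Your reconstruction is, in outline, Saito's own: reduce along the two projections to $R_a$ and $R_f$, and use the transvections $w_\beta w_\alpha\colon x\mapsto x+I(x,\alpha)\delta$ for $\delta=\beta-\alpha\in\rad I$ to translate roots by elements of the radical. Your computation of the transvection is correct (given $I(\alpha,\alpha)=2$, which does hold here, though it follows from $w_\alpha\in\Aut(F,I)$ rather than from the identity $w_\alpha(\alpha)=(1-I(\alpha,\alpha))\alpha$ alone), and the forward inclusion $R\subseteq\set{\alpha_f+mb+na}$ via $Q(R)\cap\rad I=\Z a\oplus\Z b$ is fine.

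The genuine gap is the one you flag yourself: you never produce the seed pairs of roots differing by exactly $a$ and exactly $b$, i.e.\ you do not show that the group of radical translations realized by $W$ is all of $\Z a\oplus\Z b$ rather than a proper sublattice. As written, the argument only yields $R\supseteq\set{\alpha_f+m\delta_1+n\delta_2}$ for some basis $(\delta_1,\delta_2)$ of a finite-index sublattice, and closing this requires an actual argument: for instance, that the fibres of $q\colon R\to R_f$ are cosets of a \emph{common} subgroup $\Lambda\subseteq\Z a\oplus\Z b$ (connectedness of the finite diagram, axiom (4)), and then that $\Lambda=\Z a\oplus\Z b$ because otherwise $Q(R)\cap\rad I$ would equal $\Z b\oplus k\Z a$ (or similar) for some $k>1$, contradicting the admissible-frame normalization. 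Without that last step the statement is simply not established; note also that for non--simply-laced elliptic root systems the analogous formula genuinely fails (the counting functions are not identically $1$), so the step cannot be waved through. Your fallback — direct verification on the explicit root bases of $E_6^{(1,1)}$, $E_7^{(1,1)}$, $E_8^{(1,1)}$ (and $D_4^{(1,1)}$) — is legitimate for the purposes of this paper, which only uses these types, but it should then be stated as the actual proof rather than as a remark.
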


\begin{definition}[{\cite[\S 2.3]{IY17}}]
The elements of $R$ are also called the \emph{real roots} of $R$.
We define the set $\Delta_{im}$ of \emph{imaginary roots} of $R$ as 
$$\Delta_{im}=\left\lbrace mb+na \st m,n\in \Z\setminus \{0\}\right\rbrace. $$
\end{definition}

\subsection{The Dynkin graph}
\label{ssec_DynkinGraph}

To a marked elliptic affine root system $(R,G)$ one can associate a diagram $\Gamma_{R,G}$ called the \emph{Dynkin diagram} of $(R,G)$ (see \cite[\S 5]{EARSI}). In general, the vertices of $\Gamma_{R,G}$ are in bijection with a \emph{root basis} of $R$ (defined as in \cite[\S 3.4]{EARSI}), and two vertices $\alpha,\beta\in\abs{\Gamma_{R,G}}$ are connected following the rule:
    \begin{center}
\begin{tikzcd}[row sep=tiny]
\alpha & \beta &  \\
\circ   & \circ & \mbox{ if } \ \ I(\alpha,\beta)=0;  \\
\circ  \arrow[r,dash] & \circ &  \mbox{ if } \ \ I(\alpha,\beta)=-1;\\
\circ  \ar[equal, dashed]{r} & \circ & \mbox{ if } \ \ I(\alpha,\beta)=2.
\end{tikzcd}
\end{center}

The results of this section hold for all elliptic root systems (classified in \cite[Table 1]{EARSI}).

\begin{notation}\label{not_vertices_index}
In the rest of this work we will only need diagrams $\Gamma$ of the following specific shape (called an \textit{octopus} in \cite{STW16}):
 \begin{center}
\begin{tikzcd}[every arrow/.append style={dash}]
            &      &            &\circ \arrow[loop, out=90-30, in=90+30,looseness=6,draw=none]{}{(0,0)} \ar{dl} \ar[equal,dashed]{d} \ar{ddl} \ar{dr} \ar{ddr}  &&& \\
\circ \arrow[loop, out=90-30, in=90+30,looseness=6,draw=none]{}{(1,a_1-1)}\rar  & ... \rar & \circ \arrow[loop, out=70-30, in=70+30,looseness=6,draw=none]{}{(1,1)} \rar  &\circ \arrow[loop, out=270-30, in=270+30,looseness=6,draw=none]{}{(0,1)} \rar \ar{dr} \ar{dl} & \circ \arrow[loop, out=110-30, in=110+30,looseness=6,draw=none]{}{(r,1)} \rar & ... \rar & \circ \arrow[loop, out=90-30, in=90+30,looseness=6,draw=none]{}{(r,a_r-1)}\\
                         & & \circ \arrow[loop, out=70-30, in=70+30,looseness=6,draw=none]{}{(2,1)} \ar{dl} &      & \circ \arrow[loop, out=110-30, in=110+30,looseness=6,draw=none]{}{(r-1,1)} \ar{dr} & &  \\ 
                         &  ... \ar{dl} & &      & & ... \ar{dr} &  \\ 
                      \circ \arrow[loop, out=70-30, in=70+30,looseness=6,draw=none]{}{(2,a_2-1)}   &  & ... &   ...   & ... &  & \circ \arrow[loop, out=110-30, in=110+30,looseness=6,draw=none]{}{(r-1,a_{r-1}-1)}  \\ 
\end{tikzcd}
\end{center}
We assume from now on that elliptic diagrams have the octopus shape, and adopt the labelling shown above for the vertices of $\Gamma$. 
We denote by $\alpha_{(i,j)}$ the root of $R$ corresponding to the vertex $(i,j)$.
\end{notation}

The marking of an octopus-shaped elliptic root system is generated by the class $a\coloneqq \alpha_{(0,1)}-\alpha_{(0,0)}$.
Erasing the $(0,0)$ vertex and all adjacent edges in the above diagrams yields the Dynkin diagram $\Gamma_a$ associated with $R_a$, so we have $\abs{\Gamma_a}= \abs{\Gamma}\setminus \{(0,0)\}$. Then $\{\alpha_{v}\}_{v\in \abs{\Gamma_a}}$ give a root basis for $R_a$. Let $b$
be the imaginary root of the affine system $R_a$ ($b$ is a positive linear combination of the $\set{\alpha_v}_{v\in{\abs\Gamma_a}}$, see \cite[Chap. 5]{Kac90}). Then, $(a,b)$ is a basis for $\rad I$.

\begin{example}\label{ex_elliptic_EARS}
Our main interest is in elliptic root systems arising from quotients of elliptic curves by automorphism groups (see Section \ref{ssec_root_system_associated_to_D}). They are the root systems of type $D_4^{(1,1)}$, $E_6^{(1,1)}$, $E_7^{(1,1)}$ and $E_8^{(1,1)}$, whose diagrams are all octopus-shaped:
\begin{center}
\begin{tikzcd}[row sep=tiny, every arrow/.append style={dash}]
                & \circ \rar \ar{ddr} &\circ \rar \arrow[ddl,crossing over] \ar[equal,dashed]{dd}  & \circ \\
D_4^{(1,1)}   &  & &   \\
&             \circ \rar    & \circ \rar  \arrow[uur]  & \circ \arrow[uul, crossing over]  

\end{tikzcd}
\end{center}
    \begin{center}
\begin{tikzcd}[row sep=tiny, every arrow/.append style={dash}]
             &   & &\circ \rar \ar{dl} \ar[equal,dashed]{dd}  & \circ \rar & \circ  \\
E_6^{(1,1)}  &\circ  \rar & \circ & & &   \\
&                & & \circ \rar \ar{ul} \arrow[uur]  & \circ \rar \arrow[uul, crossing over] &  \circ  

\end{tikzcd}
\end{center}
\begin{center}
\begin{tikzcd}[row sep=tiny, every arrow/.append style={dash}]
          & &\circ \rar \ar{dl} \ar[equal,dashed]{dd}  & \circ \rar & \circ \rar & \circ \\
E_7^{(1,1)}   & \circ & & & \\
&              & \circ \rar \ar{ul} \arrow[uur]  & \circ \rar \arrow[uul, crossing over] &  \circ \rar & \circ 
\end{tikzcd}
\end{center}
\begin{center}
\begin{tikzcd}[row sep=tiny, every arrow/.append style={dash}]
          &  &\circ \rar \ar{dl} \ar[equal,dashed]{dd}  & \circ \rar & \circ \rar & \circ \rar & \circ \rar & \circ \\
E_8^{(1,1)}   & \circ & & &   \\
&              & \circ \rar \ar{ul} \arrow[uur]  & \circ \rar \arrow[uul, crossing over] &  \circ
\end{tikzcd}
\end{center}
Erasing the $(0,0)$ vertex and all adjacent edges in the above diagrams yields the Dynkin diagrams of affine root systems of type $\tilde{D}_4, \tilde{E}_6,\tilde{E}_7$, and $\tilde{E}_8$ respectively. 
\end{example}

\subsection{The Weyl group} 

Since $a\in\rad I$, $W$ preserves the marking $G\subset F$. Then, the projection $p\colon F\to F/G$ induces a homomorphism $p_*\colon W\to W_a$ to the affine Weyl group associated with $R_a$. Denote by $T$ the kernel of $p_*$. 

\begin{lemma}[{\cite[(1.15)]{EARSI}}]\label{lem_Wa_in_W}
The subgroup of $W$ generated by $\set{w_{\alpha_v} \st v\in \abs{\Gamma_a}}$ is isomorphic to $W_a$, so the sequence 
\begin{equation}\label{eq_ses_TWWa}
0\to T\to W\to W_a \to 1    
\end{equation}
splits into a semi-direct product $W=T \rtimes W_a$.
\end{lemma}

Next we give an explicit descripton of $T$. To do so, we introduce the following elements of $W$:

\begin{definition}\label{def_r_ij}
For each vertex of $\Gamma_a$ define elements of $W$:
\begin{enumerate}
    \item $r_{(0,1)} \coloneqq w_{\alpha_{(0,1)}}w_{\alpha_{(0,0)}}$;
    \item $r_{(i,1)}\coloneqq w_{\alpha_{(i,1)}} r_{(0,1)} w_{\alpha_{(i,1)}} r_{(0,1)}^{-1}$ for $i=1,...,r$;
    \item $ r_{(i,j)}\coloneqq w_{\alpha_{(i,j)}} r_{(i,j-1)} w_{\alpha_{(i,j)}} r_{(i,j-1)}^{-1}$ for $i=1,...,r$, $j=2,...,a_i-1$;
\end{enumerate}
\end{definition}

\begin{lemma}[{\cite[Theor. 3.5]{STW16}}]\label{lem_TisQ(Rf)}
For $v\in\abs{\Gamma_a}$, let $\alpha_v$ be the corresponding root and $r_v$ the corresponding element from Def. \ref{def_r_ij}. For all $\beta\in F$, we have
$$ r_v(\beta)= \beta - I(\beta,\alpha_v)a. $$
Moreover, there is a group homomorphism
\begin{align*}
\varphi \colon Q(R_a) & \to W \\
    \sum_{v\in \abs{\Gamma_a}} m_{v} \alpha_{v} & \mapsto \prod_{v\in\abs{\Gamma_a}} r_{v}^{m_{v}}
\end{align*}
with kernel generated by $b$. The group $T$ is isomorphic to the lattice $\varphi (Q(R_a))\simeq Q(R_f)$, and $\varphi$ induces the inclusion $T\to W$ of the exact sequence \eqref{eq_ses_TWWa}.
\end{lemma}

\subsection{Tits cone, regular set, and fundamental domain}\label{sec_the_regular_set}
We follow \cite{EARSII} and define:
\begin{equation}\label{eq_def_EandH}
\begin{split}
\mathbb H \coloneqq \set{x\in\Hom(\rad I, \C) \st x(a)=1,\im x(b)>0};\\
\mathbb E \coloneqq \set{x\in\Hom(F, \C) \st x(a)=1,\im x(b)>0}.
\end{split}
\end{equation}

The Weyl group $W$ acts on $\mathbb E$ by $(gx)(\beta)\coloneqq x(g^{-1}\beta)$ for $x\in \mathbb E$ and $g\in W$. This action preserves $x_{|\rad I}$, so it respects the restriction map $s\colon\mathbb E \to \mathbb H $. 

With the goal of describing a fundamental domain for the action of $W$ on $\mathbb E$, we will identify $\mathbb{E}$ with the complexified Tits cone of $R_a$ (see \cite[\S 3.12]{Kac90} for basic facts about Tits cones). 

Recall that to the affine root system $R_a$ is associated the Weyl alcove
$$ A_\R\coloneqq \set{h\in Q(R_a)_\R^* \st h(\alpha_v)> 0 \ \ \mbox{ for } v\in\abs{\Gamma_a} } $$
and the (real) \textit{Tits cone} $\mathsf T_{\R}(R_a)$, defined as the topological interior of
$$\overline{\mathsf T_\R(R_a)}\coloneqq \bigcup\limits_{w\in W_a} w \overline{A_\R}.  $$

The \emph{complexified Tits cone} associated to $R_a$ is 
$$ \mathsf T(R_a)\coloneqq \set{h\in Q(R_a)_\C^* \st \im h \in  \mathsf T_{\R}(R_a)}. $$

\begin{lemma}\label{lem_E_is_Tits}
There is an isomorphism of complex manifolds between $\mathbb{E}$ and $\mathsf T(R_a)$, equivariant with respect to the action of $W_a$. 
\end{lemma}

\begin{proof}
Consider the inclusion $Q(R_a)\subset Q(R)$ mapping $\alpha + \Z a\in R_a$ to $\alpha \in Q(R)$. This induces a restriction map $\phi\colon\Hom(Q(R),\C)\to \Hom(Q(R_a),\C)$.

The complexified Tits cone can be equivalently described as 
$$ \mathsf T(R_a)= \set{h\in Q(R_a)_\C^* \st \im h(b)>0} $$
(this is \cite[Lemma 2.12]{Ike14}). Then, it is clear that $\phi$ is a holomorphic map sending $\mathbb{E}$ bijectively onto $\mathsf T(R_a)$.
Moreover, the action of $W_a$ on $\mathsf T(R_a)$ coincides with that on $\mathbb{E}$ through $W_a \subset W$ as in Lemma \ref{lem_Wa_in_W}. 
\end{proof}

 In order to describe the action of $T$ on $\mathbb{E}$ (see Lemma \ref{lem_Wa_in_W}), it will be convenient to emphasize a complex structure on $\mathbb{E}_\tau \coloneqq s^{-1}(\tau)$ induced by $\tau\in \HH$.
In fact, $\tau$ defines an isomorphism $\rad I \simeq \C$ by 
$$  ua+vb \mapsto u + v\tau. $$
Next, identify $\mathbb{E}_\tau$ with the relative tangent space of $\pi$ over $\tau$. This is a complexification 
$$ V\otimes_\R \C \,\text{ where }\, V\coloneqq (F/\rad I)^*. $$
The bilinear form $I$ induces an isomorphim $ I^*\colon V\xrightarrow{\sim} V^*= F/\rad I$, and in turn an isomorphism of complex vector spaces 
$$\tau\otimes I\colon (F/\rad I)\otimes_\R \rad I \simeq V\otimes_\R \C.$$ 
We write
\begin{equation}\label{eq_iso_radFrad_to_Vc}
\mathbb{E}_\tau \simeq V\oplus \tau V.
\end{equation}
Then we have:

\begin{lemma}\label{lem_Saito_actions on Vc}
\begin{enumerate}[(i)]
    \item $W$ acts preserving fibers $\mathbb E_\tau$ above a point in $\tau\in\mathbb H$;
    \item Under the identification \eqref{eq_iso_radFrad_to_Vc}, the group $T$ acts as a finite index subgroup of the real translation lattice $Q(R_f)\subset V$. In particular, $T$ acts freely on $\mathbb{E}$.
\end{enumerate}
\end{lemma}

\begin{proof}
The first statement is straightforward, since $\tau$ is determined by the restriction of $x\in \mathbb{E}$ to $\rad I$, which is $W$-invariant.
The second statement follows immediately from Lemma \ref{lem_TisQ(Rf)} and the fact that $x(a)=1$ for all $x\in \mathbb{E}$. 
\end{proof}

We can finally describe the regular set for the action of $W$ on $\mathbb{E}$:

\begin{proposition}\label{prop_Saito_action_of_W}
The action of $W$ on $\mathbb E$ is properly discontinuous. Moreover,
the space of regular orbits of $W$ is $$ \mathsf{X}_{\mathrm{reg}}\coloneqq \mathbb E \setminus \cup_{\alpha\in R} H_\alpha,$$
where $H_\alpha\subset \mathbb{E}$ is the reflection hyperplane defined by the equation $x(\alpha)=0$. 
\end{proposition}

\begin{proof}
The first statement is \cite[(3.5)]{EARSII}. The second follows from the description of the regular set of $\mathsf{T}(R_a)$ (\cite[Prop. 3.12]{Kac90}), combined with Lemma \ref{lem_E_is_Tits} and the fact that $T$ acts freely on $\mathbb{E}$ (Lemma \ref{lem_Saito_actions on Vc}).
\end{proof}

We think of $\mathsf{X}_{\mathrm{reg}}$ and $\mathbb{E}$ as naturally sitting in $\Hom(F,\C)$. 

Denote by $A\subset \mathsf T(R_a)$ the \textit{complexified Weyl alcove}
$$ A\coloneqq \set{h\in \mathsf{T}(R_a) \st \im h \in A_\R}. $$
We think of $A$ as embedded in $\mathbb{E}$ via Lemma \ref{lem_E_is_Tits}, and  write $A_\tau$ for the intersection of $A$ with $\mathbb E_\tau$.

Let $B'$ be a hypercube in $V$ which contains the origin and is a fundamental domain for the action of $T$ on $V$, and define $B_\tau\coloneqq \set{h\in \mathbb E_\tau \simeq V_\C \st \re (h) \in B'}$. 

\begin{proposition}\label{prop_def_fundamental_domain}
A fundamental domain for the action of $W$ on $\mathbb E_\tau$ is the intersection 
$$ D_\tau\coloneqq A_\tau\cap B_\tau.$$
A fundamental domain for the action of $W$ on $\mathbb E$ is $D\coloneqq \cup_{\tau\in \mathbb{H}} D_\tau \simeq D_{\sqrt{-1}}\times \HH\subset \mathsf{X}_{\mathrm{reg}}$.
\end{proposition}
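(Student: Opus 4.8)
The plan is to reduce everything to a single fiber and then exploit the split exact sequence $W = T \rtimes W_a$ from \eqref{eq_ses_TWWa}. By Lemma~\ref{lem_Saito_actions on Vc}(i) the group $W$ preserves each fiber $\mathbb E_\tau$, so once I produce a fundamental domain $D_\tau$ for $W$ on $\mathbb E_\tau$ and check that its shape is independent of $\tau$ in the trivializing coordinates of \eqref{eq_iso_radFrad_to_Vc}, the global statement will follow by taking the union over $\tau\in\HH$. Fix $\tau$ and use the identification $\mathbb E_\tau\simeq V\oplus\tau V$ of Lemma~\ref{lem_Saito_actions on Vc}(ii), under which $W_a$ acts by the simple reflections $w_{\alpha_v}$ on the $\tau V$-summand while $T$ acts by lattice translations on the $V$-summand. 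The strategy is then: find a fundamental domain for $W_a$, check that $T$ preserves it, and cut it down by a fundamental domain for $T$.

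First I would treat the $W_a$-action. Every point of $\mathbb E_\tau$ satisfies $h(b)=\tau\in\HH$, hence lies in the complexified Tits cone, where the open alcove condition $\im(h(\alpha_v))>0$ constrains only the $\tau V$-component. Since $W_a$ acts on that component through its affine reflections, Remark~\ref{rmk_Kac} (Kac) shows $\overline{A_\R}$ is a fundamental domain for $W_a$ on the Tits cone, and complexifying gives that $A_\tau$ is a fundamental domain for $W_a$ on $\mathbb E_\tau$; the stabilizer of an interior point is trivial because the open alcove avoids every affine mirror. Moreover $A_\tau\subset X^N_{reg}$: the strict inequalities force $\im(h(\alpha))\ne 0$ for every affine positive root, and because the elliptic real roots differ from the affine ones only by multiples of $a$ with $h(a)=1$ real, the imaginary parts are unchanged, so $h(\alpha)\ne 0$ for all $\alpha\in R$.

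Next I would bring in $T$. As $T$ translates only in the $V$-direction it fixes the $\tau V$-component and therefore preserves $A_\tau$, i.e. $T\cdot A_\tau=A_\tau$. The hypercube $B'$ is by construction a fundamental domain for $T$ on $V$, so $D_\tau=A_\tau\cap B_\tau$ is a fundamental domain for the residual $T$-action on $A_\tau$. To promote this to a fundamental domain for all of $W$ I would invoke the elementary principle for a semidirect product $G=N\rtimes H$: if $D_H$ is a fundamental domain for $H$ that is preserved by $N$, and $D_N\subset D_H$ is a fundamental domain for $N$ on $D_H$, then $D_N$ is a fundamental domain for $G$. Existence is immediate: given $h$, move it into $A_\tau$ by some $u\in W_a$ and then into $B_\tau$ by some $t\in T$, so that $tu\in W$ and $tu\cdot h\in D_\tau$ since $T$ preserves $A_\tau$. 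For uniqueness, if $h$ and $w\cdot h$ lie in the interior of $D_\tau$ with $w=tu$, comparing $\tau V$-components (which only $u$ affects) and using that $A_\tau$ is $W_a$-fundamental forces $u=1$; the leftover translation $t\in T$ is then killed because $B'$ is $T$-fundamental, giving $w=1$.

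Finally, for the global assertion I would observe that in the coordinates of \eqref{eq_iso_radFrad_to_Vc} the defining conditions of both $A_\tau$ and $B_\tau$ are manifestly $\tau$-independent, so the family $\{D_\tau\}$ trivializes as $D\simeq D_{\sqrt{-1}}\times\HH$; combined with fiber-preservation from Lemma~\ref{lem_Saito_actions on Vc}(i) this yields a fundamental domain for $W$ on all of $\mathbb E$, contained in $X^N_{reg}$ by the regularity already checked. The main obstacle is the interface between the abstract semidirect product $W=T\rtimes W_a$ and the geometric splitting $V\oplus\tau V$: one must verify, via Lemma~\ref{lem_Saito_actions on Vc}(ii), that the $\tau V$-component alone records the $W_a$-coset (so the alcove pins it down) while $T$ genuinely fixes that component. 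This transversality is exactly what lets the two fundamental domains combine cleanly despite the nonabelian extension.
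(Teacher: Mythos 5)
Your proposal is correct and follows essentially the same route as the paper: both arguments split $Z\in\mathbb E_\tau$ into real and imaginary parts via \eqref{eq_iso_radFrad_to_Vc}, move the imaginary part into the alcove using Kac's result (Remark \ref{rmk_Kac}) for $W_a$, and then adjust the real part by a translation in $T$ (which leaves the imaginary part untouched), finally globalizing over $\tau$ by fiber-preservation. The only cosmetic difference is that you verify non-overlap directly through the semidirect-product decomposition $W=T\rtimes W_a$, whereas the paper delegates that point to the freeness and proper discontinuity of the action established in Proposition \ref{prop_Saito_action_of_W}.
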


\begin{proof}
As a consequence of Prop. \ref{prop_Saito_action_of_W}, it is enough to show that for every $Z\in\mathbb E_\tau$ there exists an element $w\in W$ such that $w\cdot Z\in D_\tau$.  Using the complex structure given in \eqref{eq_iso_radFrad_to_Vc}, we may write every $Z\in \mathbb E_\tau$ as $\re Z + \tau\im Z$. The closed alcove $\overline {A_\R}$ is a fundamental domain for the action of $W_a$ on $\overline{\mathsf T_\R(R_a)}$ \cite[Prop. 3.12]{Kac90}, so there exists an element $w'\in W_a$ such that $w' \cdot Z\in A_\tau$. By definition of $B_\tau$, there is an element $r\in T$ such that $r\cdot (w'\cdot Z) \in B_\tau$ and $\im r\cdot (w'\cdot Z)=\im (w'\cdot Z)$, so $r\cdot (w'\cdot Z)\in D_\tau$.

The statement about $\mathbb E$ follows, since every $w\in W$ preserves the fibers $\mathbb E_\tau$ by Lemma \ref{lem_Saito_actions on Vc}.
\end{proof}

\subsection{Boundary of \texorpdfstring{$D$}{D} and fundamental group} \label{sec_fund-dom-and-boundary}

Next, we describe the boundary of $D$ in $\mathsf{X}_{\mathrm{reg}}$ in terms of walls for the action of $W$. 
For vertices $v\in\abs{\Gamma_a}$ we define walls $W_{v,\pm}\subset \overline D$ for the Weyl alcove
\begin{align*}
    W_{v,+}\coloneqq \set{Z\in \mathsf{X}_{\mathrm{reg}}\cap \overline D \st Z(\alpha_v)\in \R_{>0}, \im Z(\alpha_w)>0 \mbox{ for }v\neq w \in \abs{\Gamma_a}}\\
     W_{v,-}\coloneqq \set{Z\in \mathsf{X}_{\mathrm{reg}}\cap \overline D \st Z(\alpha_v)\in \R_{<0}, \im Z(\alpha_w)>0 \mbox{ for }v\neq w \in \abs{\Gamma_a}}
\end{align*}
For vertices $u\in \abs{\Gamma_{f}}$, write $Y'_{u,\pm}$ for the faces of the fundamental hypercube $B'$, and let 
$$Y_{u,\pm}\coloneqq \left[\cup_\tau (Y'_{u,\pm}\oplus \tau V)\right]  \cap \overline D \subset \mathsf{X}_{\mathrm{reg}}. $$
Then, the boundary of $D$ in $\mathsf{X}_{\mathrm{reg}}$ is contained in the union of the walls $W_{v,\pm}$ and $Y_{u,\pm}$ as $v,u$ vary. 

Next, we describe the fundamental group of $\mathsf{X}_{\mathrm{reg}}/W$. 

\begin{definition}
Let $R$ be an elliptic root system. The Artin group $G_W$ associated with the Weyl group $W$ is the group generated by $\set{g_v,h_v\st v\in\abs{\Gamma_a}}$ with relations
\begin{align*}
    g_vg_u=g_ug_v &\;\mbox{ if }\; I(\alpha_v,\alpha_u)=0;\\
    g_vg_ug_v=g_ug_vg_u &\;\mbox{ if }\; I(\alpha_v,\alpha_u)=-1;\\
    h_vh_u=h_uh_v &\;\mbox{ for all }\; u,v\in\abs{\Gamma_a};\\
    g_vh_u=h_ug_v &\;\mbox{ if }\; I(\alpha_v,\alpha_u)=0;\\
    g_vh_ug_v=h_uh_v &\;\mbox{ if }\; I(\alpha_v,\alpha_u)=-1;\\
\end{align*}
\end{definition}

\begin{proposition}\label{prop_FundamentalGroupGW}
Suppose $R$ is an elliptic root system. Then, the fundamental group of $\mathsf{X}_{\mathrm{reg}}/W$ is 
$$ \pi_1(\mathsf{X}_{\mathrm{reg}}/W, *) \simeq G_W. $$
The generator $g_v$ of $G_W$ is given by the path connecting $*$ and  $w_{\alpha_v}(*)$ passing through $W_{v,+}$ just once. The generator $h_v$ of $G_W$ is given by the path connecting $*$ and $r_v(*)$ which is constant in the imaginary part. 
\end{proposition}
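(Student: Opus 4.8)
The plan is to exploit the product structure of $X_{reg}$. Since $W$ acts trivially on the $\C^*$ factor of $X_{reg}\simeq \C^*\times X_{reg}^N$ and preserves $X_{reg}^N$, the quotient decomposes as a product $X_{reg}/W\simeq \C^*\times(X_{reg}^N/W)$. Projection to the first factor then gives
$$\pi_1(X_{reg}/W,*)\simeq \pi_1(\C^*,*)\times \pi_1(X_{reg}^N/W,*)\simeq \Z[\eta]\times\pi_1(X_{reg}^N/W,*),$$
where the infinite cyclic factor $\Z[\eta]$ is generated by the $S^1$-orbit of the basepoint inside $\C^*$, as claimed. It then remains to produce an isomorphism $\pi_1(X_{reg}^N/W,*)\simeq G_W$ with the Artin group.

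For this I would use the fundamental domain $\overline D$ for the $W$-action on $\mathbb E$ together with its wall decomposition from Section \ref{sec_fund-dom-and-boundary}. By Proposition \ref{prop_Saito_action_of_W} the action of $W$ on $X_{reg}^N$ is properly discontinuous and fixed-point free, so $X_{reg}^N\to X_{reg}^N/W$ is a regular covering with deck group $W$, and $X_{reg}^N/W$ is recovered from $\overline D$ by gluing its boundary walls along the corresponding elements of $W$. I would then apply the Brieskorn--Deligne--van der Lek description of the fundamental group of a reflection-orbit space (in the form used by Ikeda \cite{Ike14} and Saito \cite{EARSII}): the generators are the meridian loops crossing the codimension-one walls of $\overline D$, and the relations are extracted from the codimension-two faces. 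Crossing the reflection walls $W_{v,+}$ yields the generators $g_v$---the loop joining $*$ to $w_{\alpha_v}(*)$ through $W_{v,+}$---while crossing the translation walls $Y_{(i,j),\pm}$ yields the translation generators $h_v$, namely the loops joining $*$ to $r_v(*)$ constant in the imaginary part. This matches the generating set in the statement.

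The relations would then follow from a local analysis at the codimension-two strata. Where two reflection walls $W_u$ and $W_v$ meet, the local model is the rank-two subsystem spanned by $\alpha_u,\alpha_v$: an orthogonal pair ($I(\alpha_u,\alpha_v)=0$) gives an $A_1\times A_1$ configuration, hence $g_ug_v=g_vg_u$, whereas $I(\alpha_u,\alpha_v)=-1$ gives an $A_2$ configuration, hence the braid relation $g_ug_vg_u=g_vg_ug_v$, exactly as in the finite and affine cases. The commutations $h_uh_v=h_vh_u$ reflect that opposite faces of the hypercube $B'$ are glued by the commuting translations generating $T$ (Lemma \ref{lem_Saito_actions on Vc}). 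The mixed relations occur at strata where a reflection wall meets a translation wall and encode the conjugation action of $W_a$ on $T$: using the homomorphism $\varphi$ and the identity $w_{\alpha_v}(\alpha_u)=\alpha_u-I(\alpha_u,\alpha_v)\alpha_v$, one finds $w_{\alpha_v}r_uw_{\alpha_v}=r_u$ when $I(\alpha_u,\alpha_v)=0$ and $w_{\alpha_v}r_uw_{\alpha_v}=r_ur_v$ when $I(\alpha_u,\alpha_v)=-1$; these lift to the Artin relations $g_vh_u=h_ug_v$ and $g_vh_ug_v=h_uh_v$ respectively.

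I expect the main obstacle to be twofold. First, one must prove that the resulting presentation is \emph{complete}, i.e.\ that every relation in $\pi_1(X_{reg}^N/W)$ is a consequence of those coming from codimension-two faces; this calls for a Zariski--van Kampen type argument, ensuring that deleting strata of codimension $\geq 3$ leaves $\pi_1$ unchanged and that the cell structure induced by $\overline D$ carries the whole fundamental group. Second, and more delicate, is the precise monodromy bookkeeping for the mixed relations: one has to verify that the topological loops realize $g_vh_ug_v=h_uh_v$ on the nose, and not merely up to the conjugacy ambiguity visible at the level of $W$, and to reconcile the $l+1$ algebraic generators $h_v$ ($v\in\abs{\Gamma_a}$) with the $l$ geometric translation walls $Y_{(i,j),\pm}$ via the null-root relation $\varphi(b)=1$. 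This computation at the reflection--translation strata is where the doubly-extended elliptic structure genuinely departs from the classical finite and affine Artin presentations, so I would expect it to be the crux of the argument.
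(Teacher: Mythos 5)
Your first step is exactly the paper's: split $X_{reg}\simeq \C^*\times X_{reg}^N$ and peel off the $\Z[\eta]$ factor, reducing to $\pi_1(X_{reg}^N/W)\simeq G_W$. For that remaining claim the paper does much less than you propose: it observes that $X_{reg}^N$ coincides with the regular part $\mathsf T_{reg}(R_a)$ of the complexified Tits cone of the underlying affine root system, on which $W=T\rtimes W_a$ acts as an ``extended'' reflection group, and then simply cites van der Lek \cite{vdL83}, whose theorem on extended Artin groups yields the presentation $G_W$ --- generators, braid relations, and the mixed relations $g_vh_ug_v=h_uh_v$ included. The two obstacles you single out as the crux (completeness of the presentation read off from codimension-two strata, and the on-the-nose monodromy of the mixed relations rather than their image in $W$) are precisely the content of that cited theorem; they are not re-proven in the paper, and re-deriving them from the fundamental domain $\overline D$ of Section \ref{sec_fund-dom-and-boundary} would amount to reproving \cite{vdL83}. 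Your local computations are correct as far as they go: the conjugation identity $w_{\alpha_v}r_uw_{\alpha_v}^{-1}=r_{w_{\alpha_v}(\alpha_u)}$ does give $r_u$ or $r_ur_v$ according to $I(\alpha_u,\alpha_v)=0$ or $-1$, and your count of $l$ translation walls versus $l+1$ generators $h_v$ resolved by $\varphi(b)=1$ is the right bookkeeping. So the proposal is sound and follows the same route, but it is only a complete proof if you invoke the Brieskorn--Deligne--van der Lek machinery as a black box (as the paper does) rather than leaving its conclusion as an ``expected obstacle'' still to be overcome.
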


\begin{proof}
By Lemma \ref{lem_E_is_Tits}, the set $\mathsf{X}_{\mathrm{reg}}$ coincides with the regular subset of the complexified Tits cone $ \mathsf T_{\mathrm{reg}}(R_a)$. It is shown in \cite{vdL83} that $\pi_1(\mathsf T_{\mathrm{reg}}(R_a)/W)\simeq G_W$.
\end{proof}

\section{Triangulated categories associated to local elliptic quotients}\label{sec_tri_cat_loc_ell_quot}

We consider orbifold curves obtained from a quotient of an elliptic curve by a finite subgroup of its automorphism groups.
Every elliptic quotient has $\pr 1$ as coarse moduli space and orbifold points $p_i$ with stabilizers $\mu_{a_i}$. Up to permuting the $p_i$'s, there are only 4 possibilities, namely: $\pr 1_{2,2,2,2}$ (here, $r=4$ and $a_i=2$ for all $i$), $\pr {1}_{3,3,3}$, $\pr 1_{4,4,2}$ and $\pr 1_{6,3,2}$. We denote them respectively $X_2,X_3,X_4$ and $X_6$.

Each $X_k$ is realized as a quotient of an elliptic curve $E_k$ by a cyclic group $\mu_k$ of group automorphisms:
$$ X_k = \left[  \quotient{E_k}{\mu_k} \right]. $$

From now on, we fix $k$ and denote $X\coloneqq X_k$, $E\coloneqq E_k$, and $\mu\coloneqq \mu_k$.
Let  $Y\coloneqq \Tot(\omega_X)=\left[ \Tot(\omega_E)/\mu \right]$ be the total space of the cotangent orbifold bundle of $X$. We have a commutative diagram
\begin{center}
\begin{tikzcd}
E\rar\dar & \Tot(\omega_E) \dar \\
X \rar{\iota} & Y
\end{tikzcd}
\end{center}
where the vertical arrows are quotients by $\mu$ and the horizontal ones are inclusions via the zero section. 

Recall that a triangulated category $\mathbb{T}$ is called a \emph{K3-category} if the functor $[2]$ is a Serre functor, i.e. if for any two objects $E,F\in \mathbb{T}$ there is a natural isomorphism
$$ \Hom^\bullet(E,F) \xrightarrow{\sim} \Hom^\bullet (F,E[2])^*.  $$

Let $\sD$ denote the full triangulated subcategory of coherent sheaves supported on the zero section of $Y$. Then we have: 
\begin{lemma}\label{lem_sD_CY-category}
$\sD$ is a K3-category. In particular, the Euler form is symmetric. Moreover, for any $E,F\in D^b(X)$, one has 
$$ \Hom^\bullet_{\sD}(\iota_*E,\iota_*F)= \Hom^\bullet_{X}(E,F) \oplus \Hom^\bullet_{X}(F,E)^*[-2]. $$
In particular, $\chi_{\sD}(\iota_*E,\iota_*F)=\chi_{X}(E,F) + \chi_{X}(F,E).$
\end{lemma}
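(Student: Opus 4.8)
\emph{Strategy.} The essential geometric input is that the zero section $\iota\colon X\hookrightarrow Y=\Tot(\omega_X)$ is a regular embedding of codimension one whose normal bundle is the fiber direction of the total space, $N_{X/Y}\simeq\omega_X$; equivalently $\omega_Y\simeq\pi^*(\omega_X\otimes\omega_X^{-1})=\sO_Y$ is trivial, where $\pi\colon Y\to X$ is the projection, and $\dim Y=2$. I would first establish the explicit $\Hom$-formula, since both the K3 property and the symmetry of $\chi$ fall out of it.

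\emph{The formula.} Since $\iota$ is a closed immersion, $\iota_*$ is exact and right adjoint to $L\iota^*$, so $\Hom^\bullet_Y(\iota_*E,\iota_*F)\simeq\Hom^\bullet_X(L\iota^*\iota_*E,F)$. Because $\iota$ is regular of codimension one, the derived self-intersection splits: resolving $\iota_*\sO_X\simeq[\sO_Y(-X)\xrightarrow{s}\sO_Y]$ and pulling back, the section $s$ vanishes along $X$, so $\iota^*s=0$ and $L\iota^*\iota_*\sO_X\simeq\sO_X\oplus N_{X/Y}^\vee[1]$; hence $L\iota^*\iota_*E\simeq E\oplus(E\otimes\omega_X^{-1})[1]$. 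Substituting gives $\Hom^\bullet_X(E,F)\oplus\Hom^\bullet_X(E\otimes\omega_X^{-1},F)[-1]$, and Serre duality on the $1$-dimensional \emph{proper} orbifold $X$ (dualizing sheaf $\omega_X$) yields $\Hom^j_X(E\otimes\omega_X^{-1},F)\simeq\Hom^{1-j}_X(F,E\otimes\omega_X^{-1}\otimes\omega_X)^*=\Hom^{1-j}_X(F,E)^*$. The cancellation $\omega_X^{-1}\otimes\omega_X=\sO_X$ is precisely the Calabi--Yau feature supplied by the normal bundle. Reassembling, the second summand is $\Hom^\bullet_X(F,E)^*[-2]$, giving the claimed decomposition.

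\emph{K3 property, symmetry, and Euler characteristics.} Reading off graded pieces, the decomposition gives $\Hom^i_\sD(\iota_*E,\iota_*F)\simeq\Hom^{2-i}_\sD(\iota_*F,\iota_*E)^*$ on the pushforwards $\iota_*E$. Since every coherent sheaf on $Y$ supported on $X$ is an iterated extension of such objects (filter by the powers of the ideal $\mathcal I_X$, which annihilate it), this natural duality propagates to all of $\sD$, so $[2]$ is a Serre functor and $\sD$ is a K3-category; conceptually this is just global Serre duality on the noncompact stack $Y$ for the proper-support objects of $\sD$, valid because $\omega_Y\simeq\sO_Y$ and $\dim Y=2$. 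The Euler form is then symmetric by the usual reindexing $\chi_\sD(A,B)=\sum_i(-1)^i\dim\Hom_\sD(A,B[i])=\sum_i(-1)^i\dim\Hom_\sD(B,A[2-i])=\chi_\sD(B,A)$. Finally, taking alternating dimensions in the decomposition---both dualizing and the even shift $[-2]$ preserve Euler characteristics---gives $\chi_\sD(\iota_*E,\iota_*F)=\chi_X(E,F)+\chi_X(F,E)$.

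\emph{Main obstacle.} The step I expect to require the most care is passing from the generators $\iota_*E$ to all of $\sD$ in the K3 claim: one must check that the d\'evissage upgrades the duality on pushforwards to a genuinely functorial Serre-duality isomorphism, or else justify Grothendieck--Serre duality with proper supports on the noncompact Deligne--Mumford stack $Y=[\Tot(\omega_E)/\mu_r]$. The latter I would handle by reducing to $\mu_r$-equivariant duality on the smooth surface $\Tot(\omega_E)\simeq E\times\aff 1$, where $\omega_E\simeq\sO_E$ makes the canonical bundle equivariantly trivial and equivariant duality for compactly supported complexes applies. A secondary, routine point is confirming the codimension-one self-intersection splitting on the orbifold and keeping the shift and dualization bookkeeping consistent.
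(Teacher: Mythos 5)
Your argument is correct, but it takes a genuinely different route from the paper, whose entire proof is a one-line citation of \cite[Lemma 4.4]{Kel11}: there the category $\sD$ is recognized as the $2$-Calabi--Yau completion of $D^b(X)$, and the decomposition $\Hom^\bullet_X(E,F)\oplus\Hom^\bullet_X(F,E)^*[-2]$ together with the Serre functor $[2]$ is exactly what that lemma supplies, at the level of dg bimodules. You instead unwind the same statement geometrically: adjunction for the closed immersion $\iota$, the splitting $L\iota^*\iota_*E\simeq E\oplus(E\otimes\omega_X^{-1})[1]$ coming from the vanishing of the pulled-back section cutting out the zero divisor, and Serre duality on the proper one-dimensional orbifold $X$; your bookkeeping of shifts and duals checks out. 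For the K3 property I would lean on your second option --- $\mu_r$-equivariant Serre duality with proper supports on $\Tot(\omega_E)\simeq E\times\aff 1$, whose canonical bundle is equivariantly trivial --- rather than on the d\'evissage through powers of the ideal sheaf: the isomorphisms you build on pushforwards are a priori natural only in morphisms coming from $D^b(X)$, not in arbitrary morphisms of $\sD$ (e.g.\ the degree-$2$ ones), so upgrading them to a functorial Serre duality on all of $\sD$ really does require the global duality statement, as you anticipate. The trade-off is clear: the paper's citation is shorter and hands you the functorial (bimodule-level) statement for free, while your computation is self-contained, elementary, and makes the geometric origin of the two summands visible.
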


\begin{proof}
This follows from \cite[Lemma 4.4]{Kel11}.
\end{proof}

\begin{lemma}\label{lem_iota_iso_of_K_groups}
The map $\iota$ induces an isomorphism of abelian groups $K(X)\simeq K(\sD)$.
\end{lemma}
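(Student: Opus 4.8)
The plan is to identify each side with the Grothendieck group of an abelian category and then conclude by dévissage. Throughout, write $\sD=D^b_X(Y)$ for the full subcategory of $D^b(Y)$ of objects whose cohomology sheaves are supported on the zero section $X$, let $\mathcal I\subset\sO_Y$ be the ideal sheaf of $X$, and let $\Coh_X(Y)\subset\Coh(Y)$ be the abelian category of coherent sheaves set-theoretically supported on $X$. First I would reduce the left-hand $K$-group to that of an abelian category: the standard $t$-structure on $D^b(Y)$ restricts to $\sD$, since the cohomology sheaves of a complex supported on $X$ are again supported on $X$, and the heart of the restricted $t$-structure is exactly $\Coh_X(Y)$. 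This $t$-structure is bounded because every object of $\sD$ is a bounded complex, so by the standard comparison between the Grothendieck group of a triangulated category carrying a bounded $t$-structure and that of its heart, the inclusion induces $K(\sD)\cong K_0(\Coh_X(Y))$.

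Next I would apply dévissage to the exact fully faithful functor $\iota_*\colon\Coh(X)\to\Coh_X(Y)$. It identifies $\Coh(X)$ with the full subcategory of $\sO_X=\sO_Y/\mathcal I$-modules, i.e. of $\sO_Y$-modules annihilated by $\mathcal I$, and this subcategory is closed under subobjects, quotients, and finite direct sums in $\Coh_X(Y)$. Moreover, any coherent $F$ supported on $X$ is annihilated by some power $\mathcal I^{\,n}$ (by coherence and Noetherianity), so the $\mathcal I$-adic filtration $F\supseteq\mathcal I F\supseteq\cdots\supseteq\mathcal I^{\,n}F=0$ is finite with successive quotients $\mathcal I^{\,k}F/\mathcal I^{\,k+1}F$ annihilated by $\mathcal I$, hence in the image of $\iota_*$. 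These are precisely the hypotheses of Quillen's dévissage theorem, which therefore yields the isomorphism $K(X)=K_0(\Coh(X))\xrightarrow{\sim}K_0(\Coh_X(Y))\cong K(\sD)$, induced by $\iota_*$. Since all the categories in sight are Noetherian abelian categories, Quillen's theorem applies verbatim to the orbifold $X$ and the stack $Y$; alternatively one may run the whole argument $\mu_r$-equivariantly on $\Tot(\omega_E)$, where the $\mathcal I$-adic filtration is $\mu_r$-equivariant and descends to $Y$.

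I expect the only genuine obstacle to be \emph{injectivity}. Surjectivity of $\iota_*$ on $K_0$ is immediate from the telescoping identity $[F]=\sum_k[\mathcal I^{\,k}F/\mathcal I^{\,k+1}F]$, each term lying in the image. For injectivity one must build the inverse as the associated-graded map $[F]\mapsto\sum_{k\ge0}[\iota^*(\mathcal I^{\,k}F/\mathcal I^{\,k+1}F)]$ and check it is well defined, i.e. additive on short exact sequences; this is delicate because the $\mathcal I$-adic filtration of a subsheaf need not coincide with the one induced from the ambient sheaf, and reconciling them requires the Artin--Rees lemma (equivalently, invoking Quillen's theorem directly). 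It is worth noting that the naive candidate inverse $L\iota^*$ does \emph{not} work: since $X\subset Y$ is a smooth Cartier divisor with conormal bundle $\omega_X^{-1}$, one has $L\iota^*\iota_*(-)\cong(-)\oplus(-\otimes\omega_X^{-1})[1]$, so on $K$-groups $L\iota^*\iota_*=\mathrm{id}-(\omega_X^{-1}\otimes-)$, which fails to be injective. This is exactly why dévissage, rather than adjunction, is the correct tool.
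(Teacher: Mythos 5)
Your argument is correct, and the surjectivity half coincides with the paper's: both filter a coherent sheaf supported on the zero section by powers of the ideal sheaf (the paper phrases this via the infinitesimal neighborhoods $X_n$) so that the graded pieces are $\sO_X$-modules, whence $\iota_*K(X)\to K(\sD)$ is onto. Where you diverge is injectivity. The paper exploits the special geometry of the local model: since $Y=\Tot(\omega_X)$ retracts onto its zero section via the bundle projection $\pi\colon Y\to X$ with $\pi\circ\iota=\id_X$, and $\pi_*$ is exact on the category $\sB$ of sheaves supported on $X$ (as $R^i\pi_*=0$ there for $i>0$), the induced map $K(\pi_*)$ is an explicit left inverse to $K(\iota_*)$, and surjectivity upgrades it to a two-sided inverse. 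You instead invoke Quillen's d\'evissage theorem for the inclusion of the $\mathcal{I}$-annihilated sheaves into $\Coh_X(Y)$, after identifying $K(\sD)$ with $K_0$ of the heart $\Coh_X(Y)$. Both routes are valid; yours is more general (it needs no retraction and would work for any closed immersion into a Noetherian ambient space, equivariantly or on the stack), at the cost of citing a substantially bigger theorem, while the paper's one-line use of $\pi_*$ is the more elementary argument available precisely because $Y$ is a total space. Your observation that the adjunction candidate $L\iota^*$ fails --- giving $\id-(\omega_X^{-1}\otimes-)$ on $K$-groups, which kills the $\omega_X$-invariant classes that later generate $\rad\chi_\sD$ --- is accurate and a worthwhile sanity check, but note that $\pi_*$, not $L\iota^*$, is the inverse the paper actually uses, so this obstruction never arises there.
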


\begin{proof}
Let $X_n$ be the $n$-th order neighborhood of $X$ in $Y$. Denote by $\sB$ be the abelian category of sheaves supported on $X$. Then any $F\in \sB$ is an $\sO_{X_n}$-module for some $n$. Therefore, $F$ is obtained as a successive extension of $\sO_X$-modules, and the map
$$ \iota_*\colon K(X) \to K(\sB)= K(\sD) $$
is surjective. Let $\pi\colon Y\to X$ denote the projection to the zero section. Since $R^i\pi_*=0$ for $i>0$, the functor
$$\pi_*\colon \sB \to \Coh(X)$$
is exact. The induced map on $K$-groups is the inverse of $\iota_*$.
\end{proof}

\subsection{Exceptional and spherical objects}

An object $S\in \sD$ is called \emph{spherical} if $\Hom^\bullet(S,S)\simeq \C\oplus \C[-2]$. Suppose $S\in\sD$ is a spherical object. Given an object $G\in \sD$ we define $\Phi_S (G)$ to be the cone of the evaluation morphism
$$
\Hom^\bullet(S,G)\otimes S \xrightarrow{ev} G \to \Phi_S (G).
$$
Similarly, $\Phi^-_S (G)$ is a shift of the cone of the coevaluation map
$$ \Phi^-_S(G) \to G\xrightarrow{ev^*} \Hom^\bullet(G,S)^*\otimes S $$
The operations $\Phi_S$, $\Phi^-_S$ define autoequivalences of $\sD$, called \emph{spherical twists} \cite{ST01}.

Spherical twists act on $K(\sD)$ via reflections: if $S$ is a spherical object, and $[G]\in K(\sD)$, we have
\begin{equation}\label{eq_def_of_wS}
w_S([G])\coloneqq[\phi_S(G)]=[G]-\chi(S,G)[S]. 
\end{equation}

\begin{lemma}\label{lem_properties_of_twists}
Let $S$ be a spherical object of $\sD$. Then,
\begin{enumerate}[(i)]
    \item $\Phi_S\Phi_S^{-}\simeq \id_\sD$ and $\Phi_S^{-}\Phi_S\simeq \id_\sD$;
    \item $\Phi_S (S) \simeq S[-1]$;
    \item for any spherical object $S'$ such that $\Hom^\bullet(S',S)\simeq \C[-1]$, there is an isomorphism
    $$ \Phi_S\Phi_{S'}(S)\simeq S'. $$
\end{enumerate}
\end{lemma}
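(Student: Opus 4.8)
These three statements are the basic properties of the Seidel--Thomas twist \cite{ST01}; the plan is to obtain (ii) and (iii) by unwinding the defining triangles and to isolate (i), the claim that $\Phi_S$ is an autoequivalence, as the one genuinely nontrivial point. For (ii) I would evaluate the defining triangle at $G=S$. The spherical condition gives $\Hom^\bullet(S,S)\simeq \C\oplus\C[-2]$, so the source of the evaluation map is $S\oplus S[-2]$ and the triangle reads
$$ S\oplus S[-2] \xrightarrow{(\id,\xi)} S \to \Phi_S S \to [1], $$
where on the degree-$0$ summand the map is $\id_S$ and $\xi\colon S[-2]\to S$ is the generator of $\Hom^2(S,S)$. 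Precomposing with the unipotent automorphism $\left(\begin{smallmatrix}\id & -\xi\\ 0 & \id\end{smallmatrix}\right)$ of the source turns $(\id,\xi)$ into the projection $S\oplus S[-2]\to S$, whose cone is $S[-2][1]=S[-1]$. The dual computation with the coevaluation triangle gives $\Phi_S^- S\simeq S[1]$, which I will use in (iii).

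For (i) --- the main obstacle --- I would realize $\Phi_S$ and $\Phi_S^-$ as Fourier--Mukai transforms with kernel $P_S=\mathrm{cone}(S^\vee\boxtimes S\to\sO_\Delta)$ and its dual, and prove the unit and counit natural transformations are isomorphisms; concretely this reduces to a convolution identity $P_S\star P_S^\vee\simeq \sO_\Delta$, whose verification is a single diagram chase fed by the spherical relation $\Hom^\bullet(S,S)\simeq\C\oplus\C[-2]$. Since $\sD\subset D^b(Y)$ and these transforms preserve $\sD$, it suffices to run this on $D^b(Y)$. The equivalent, more hands-on route is the Seidel--Thomas proof of full faithfulness: compute $\Hom^\bullet(\Phi_S A,\Phi_S B)$ by applying $\Hom^\bullet(\Phi_S A,-)$ to the triangle defining $\Phi_S B$ and substituting the triangle defining $\Phi_S A$, then check that the induced map $\Hom^\bullet(A,B)\to\Hom^\bullet(\Phi_S A,\Phi_S B)$ is an isomorphism using the spherical relation; essential surjectivity follows because the (thick) image contains $S$ and, via the defining triangles, every object $G$ as an extension of $\Hom^\bullet(S,G)\otimes S$ and $\Phi_S G$. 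I expect the bookkeeping in this full-faithfulness step to be the most delicate part.

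Finally, for (iii) I would use (i) to rephrase the goal $\Phi_S\Phi_{S'}S\simeq S'$ as $\Phi_{S'}S\simeq \Phi_S^- S'$ and compute both sides from their defining triangles. The hypothesis $\Hom^\bullet(S',S)\simeq\C[-1]$ (and, by the Serre duality of Lemma \ref{lem_sD_CY-category}, $\Hom^\bullet(S,S')\simeq\C[-1]$ as well) makes the triangle for $\Phi_{S'}S$ read $S'[-1]\xrightarrow{a}S\to\Phi_{S'}S\to S'$ with $a$ a generator of $\Hom^1(S',S)$, while the triangle for $\Phi_S^- S'$ becomes $S'[-1]\xrightarrow{c}S\to\Phi_S^-S'\to S'$ with $c$ again a generator of the one-dimensional space $\Hom^1(S',S)$. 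Since $a$ and $c$ differ by a nonzero scalar their cones are isomorphic, giving $\Phi_{S'}S\simeq\Phi_S^-S'$; applying $\Phi_S$ and using $\Phi_S\Phi_S^-\simeq\id_\sD$ from (i) yields $\Phi_S\Phi_{S'}S\simeq S'$. The only point to check is that the evaluation and coevaluation maps land on generators of $\Hom^1(S',S)$, which is immediate from their tautological description.
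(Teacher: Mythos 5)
Your proposal is correct and follows the standard Seidel--Thomas arguments; the paper itself gives no proof of this lemma beyond the citation to \cite{ST01}, and your computations for (ii) and (iii) (reducing the evaluation/coevaluation triangles via the one-dimensionality of the relevant $\Hom$-spaces) together with the sketched full-faithfulness argument for (i) are exactly the route taken in that reference. The only cosmetic remark is that the Serre-duality aside in (iii) is not actually needed, since both $\Phi_{S'}S$ and $\Phi_S^-S'$ are governed by the same space $\Hom^1(S',S)$.
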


\begin{proof}
These properties follow from Proposition 2.10, Lemma 2.11 and Proposition 2.13 in \cite{ST01}.
\end{proof}

Next, we construct spherical objects (and  autoequivalences) of $\sD$. We do so starting from an exceptional collection of $D^b(X)$:

\begin{definition} Let $\mathbb T$ be a triangulated category. An object $E\in \mathbb T$ is \emph{exceptional} if $$\Hom^\bullet(E,E)=\C[0].$$ 
An \emph{exceptional collection} is a sequence of exceptional objects $E_1,...,E_n$ such that $\Hom^\bullet(E_i,E_j)=0$ for $i>j$.
We say that an exceptional collection is \emph{full} if it generates $\mathbb T$, i.e. $\mathbb T$ is the smallest triangulated category containing $\set{E_1,..,E_n}$.
\end{definition}

The category $\Coh(X)$ admits exceptional simple sheaves (see, for example, \cite{GL87}), described as follows. Identify $\Coh(X)$ with the category of $\mu$-equivariant sheaves on $E$, and denote by $p_i\in E$ the points with non-trivial stabilizer $\mu_{a_i}$. Let $\chi^0,...,\chi^{a_i-1}$ be the irreducible representations of $\mu_{a_i}$. The equivariant skyscraper sheaves $\sO_{p_i}\otimes \chi^{j}$ (with $j\in\{ 0,...,a_i-1 \}$) are exceptional objects of $\Coh(X)$.

Moreover, $D^b(X)$ admits several full exceptional collections \cite{Mel95}. We will use the following one:
\begin{equation*}
\begin{aligned}
\mathbb F\coloneqq && ( \sO_{p_1}\otimes\chi^{a_1-1} ,..., \sO_{p_1}\otimes\chi^{1}, \\
 && \sO_{p_2}\otimes\chi^{a_2-1}, ..., \sO_{p_2}\otimes\chi^{1}, \\
 && ..., \\
 &&\sO_{p_r}\otimes\chi^{a_r-1},..., \sO_{p_r}\otimes\chi^{1},\\
 && \sO ,\sO(1)).
\end{aligned}
\end{equation*}

Exceptional objects in $\Coh(X)$ give rise to spherical objects in $\sD$:

\begin{proposition}\label{prop_push_exceptional_becomes_spherical} Suppose $E\in D^b(X)$ is exceptional, then $\iota_*E$ is a sperical object in $\sD$.
\end{proposition}
\begin{proof}
This is Proposition 3.15 in \cite{ST01}.
\end{proof}

By Prop. \ref{prop_push_exceptional_becomes_spherical}, pushing forward the objects of $\mathbb F$, we obtain a set of spherical objects:
\begin{equation}
    \label{eq_def_of_Pi}
\Pi\coloneqq \left\lbrace t_1^{a_1-1},..., t_1^1, t_2^{a_2-1}, ...,  t_2^1, ..., t_r^{a_r-1},...,t_r^1, \iota_*\sO ,\iota_*\sO(1)\right\rbrace, 
\end{equation}
where $t_i^j\coloneqq\iota_*(\sO_{p_i}\otimes\chi^j)$. 
We define the subgroup of $\Aut(\sD)$ generated by spherical twists across objects of $\Pi$:
\[ \Br(\sD)\coloneqq  \left\langle \Phi_S \in \Aut(\sD)\mid S\in \Pi \right\rangle\]

\subsection{The root system associated to \texorpdfstring{$\sD$}{mathcal{D}}}\label{ssec_root_system_associated_to_D}

In this section we use the spherical objects in $\Pi$ to construct an elliptic root system associated with $(K(\sD)_\R,\chi)$.

\begin{proposition}\label{prop_KD_is_a_root_system}
The set $R\coloneqq \set{[\Phi(S)]\in K(\sD) \st S\in\Pi, \Phi\in\Br(\sD)}$ satisfies the axioms of an extended root system associated to $\left(K(\sD)_\R,\chi_{\sD}\right)$ (see Def. \ref{def_gen_root_syst}). Moreover:
\begin{enumerate}[(i)]
        \item Define classes
        \[ a\coloneqq -[\sO_q] \ \ \mbox{ and } \ \  b\coloneqq[\iota_*(\sO_X\oplus\omega_X\oplus\omega_X^2)]. \]
Then $(a,b)$ is a basis of $\rad I$ and $a$ is a marking for $R$;
    \item\label{itm3_prop_KD_is_a_root_system} The Weyl group $W$ is generated by $\set{w_S \st S\in \Pi}$ (defined in \eqref{eq_def_of_wS});
    \item\label{itm4_prop_KD_is_a_root_system} the root systems arising from an elliptic orbifold quotient are precisely the ones described in Example \ref{ex_elliptic_EARS}. The vertices $(0,0),(0,1)$ correspond to $[\iota_*\sO_X(1)] ,[\iota_*\sO_X]$ respectively, and $(i,j)$ to $[t_i^j]$ (for $i\neq 0$).
\end{enumerate}
\end{proposition}

\begin{proof}
The axioms of an elliptic root system for $\left(K(\sD)_\R,\chi_{\sD}\right)$ are verified in \cite{Mel95}. Observe that the radical $\rad I$ has rank 2, and the classes $a,b$
are invariant under twists by $\omega_{X}$, so $a,b\in\rad I$ by Lemma \ref{lem_omega_inv_radI} below.
\end{proof}

\begin{lemma}\label{lem_omega_inv_radI}
If $N\in D^b(X)$ satisfies $N\otimes\omega_X^*\simeq N$, then $[\iota_*N]\in \rad \chi_\sD$.
\end{lemma}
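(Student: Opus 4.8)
The plan is to reduce the statement to a computation with the Euler form on the orbifold curve $X$, using the two structural lemmas already in place. By Lemma \ref{lem_iota_iso_of_K_groups} the pushforward $\iota_*$ induces an isomorphism $K(X)\simeq K(\sD)$, so every class in $K(\sD)$ is of the form $[\iota_*F]$ and it suffices to check that $\chi_\sD(\iota_*N,\iota_*F)=0$ for every $F\in D^b(X)$. Lemma \ref{lem_sD_CY-category} rewrites this pairing entirely in terms of $X$:
\[
\chi_\sD(\iota_*N,\iota_*F)=\chi_X(N,F)+\chi_X(F,N).
\]
Thus the whole statement collapses to the identity $\chi_X(N,F)=-\chi_X(F,N)$, which I will establish under the hypothesis $N\otimes\omega_X\simeq N$.

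The key input is Serre duality on $X$, which is a smooth proper one-dimensional Deligne--Mumford stack with dualizing sheaf $\omega_X$. This furnishes natural isomorphisms $\Ext^i_X(N,F)\simeq \Ext^{1-i}_X(F,N\otimes\omega_X)^*$; taking the alternating sum of dimensions and reindexing by $k=1-i$ (which flips the overall sign) gives
\[
\chi_X(N,F)=-\chi_X(F,N\otimes\omega_X).
\]
Substituting the hypothesis $N\otimes\omega_X\simeq N$ on the right-hand side yields $\chi_X(N,F)=-\chi_X(F,N)$, exactly the identity needed. Plugging this back into the formula from Lemma \ref{lem_sD_CY-category} gives $\chi_\sD(\iota_*N,\iota_*F)=0$ for all $F$, and hence $[\iota_*N]\in\rad\chi_\sD$.

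The only point that requires any care is invoking Serre duality in the orbifold setting rather than for an honest smooth projective curve. Since $X$ is a smooth proper orbifold of dimension $1$, this is standard: one appeals to coherent duality for smooth proper Deligne--Mumford stacks, with $\omega_X$ serving as the dualizing complex concentrated in degree $0$. Everything else is a formal manipulation of Euler characteristics, so I expect no genuine obstacle beyond bookkeeping the sign introduced by the shift $1-i$ in the duality isomorphism.
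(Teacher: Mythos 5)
Your proof is correct and follows essentially the same route as the paper: reduce via Lemma \ref{lem_iota_iso_of_K_groups} to checking the pairing against classes $[\iota_*F]$, rewrite it as $\chi_X(N,F)+\chi_X(F,N)$ using Lemma \ref{lem_sD_CY-category}, and kill the sum with Serre duality on $X$ combined with the hypothesis $N\otimes\omega_X\simeq N$. The paper's version is just a one-line chain of equalities that leaves the Serre duality step implicit, which you spell out (correctly, including the sign from the reindexing $k=1-i$ and the remark that duality holds for the smooth proper Deligne--Mumford stack $X$).
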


\begin{proof}
The classes $[\iota_*E]$ for $E\in D^b(X)$ generate $K(\sD)$, and we have
$$\chi_\sD(\iota_*N,\iota_*E)=\chi_X(N,E)+\chi_X(E,N)=\chi_X(N,E)-\chi_X(N\otimes \omega_{X}^*,E)=0$$
by Lemma \ref{lem_sD_CY-category}.
\end{proof}

In analogy with Notation \ref{not_vertices_index}, and in virtue of Prop. \ref{prop_KD_is_a_root_system}\eqref{itm4_prop_KD_is_a_root_system}, we write 
\begin{equation}\label{eq_notation_Si}
\begin{split}
S_{(0,1)} &\coloneqq \iota_*\sO_X\\
S_{(0,0)} &\coloneqq \iota_*\sO_X(1)\\
S_{(i,j)}&\coloneqq t_i^j \mbox{ for }i=1,...,r \mbox{ and }j=1,...,a_i-1
\end{split}
\end{equation} 
for the objects of $\Pi$. 

Let $\Gamma$ denote the diagram corresponding to $R$, and recall that the definitions of the underlying affine and finite Dynkin diagrams $\Gamma_a$ and $\Gamma_f$ (see Sec. \ref{ssec_DynkinGraph}). In analogy with Definition \ref{def_r_ij}, we introduce the following elements of $\Br(\sD)$:

\begin{definition}\label{def_rho_ij}
For each vertex of $\Gamma_a$ define elements of $\Br(\sD)$ inductively as follows:
\begin{enumerate}
    \item $\rho_{(0,1)} \coloneqq \Phi_{S_{(0,1)}}\Phi_{S_{(0,0)}}$;
    \item $\rho_{(i,1)}\coloneqq \Phi_{(t_i^1)}\rho_{(0,1)}\Phi_{(t_i^1)}\rho_{(0,1)}^{-1}$ for $i=1,...,r$;
    \item $\rho_{(i,j)}\coloneqq \Phi_{(t_i^j)}\rho_{(i,j-1)}\Phi_{(t_i^j)}\rho_{(i,j-1)}^{-1}$ for $i=1,...,r$, $j=2,...,a_i-1$;
\end{enumerate}
\end{definition}

By Prop. \ref{prop_KD_is_a_root_system}\eqref{itm3_prop_KD_is_a_root_system}, the assignment $\Phi_S \mapsto w_S$ defines a surjective homomorphism
$$ q\colon \Br(\sD) \twoheadrightarrow W.$$
It follows from the definitions and from the fact that $q$ is a homomorphism that $q$ maps the elements $\rho_{v}$ to the elements $r_{v}\in T < W$ for all $v\in \abs{\Gamma_a}$.

\subsection{Perverse sheaves and a heart in \texorpdfstring{$\sD$}{mathcal{D}}}\label{ssec_PerverseSheaves}

In this section we construct the heart of a bounded t-structure of $\sD$, denoted $\sA_R$, associated with the root system $R$. To do so, we consider the minimal resolution $Y'$ of $\overline{Y}$, the coarse moduli variety of the orbifold $Y = \Tot(\omega_X)$, and use the McKay correspondence \cite{BKR01}.

As a variety, $\overline{Y}$ has singularities of type $A_{a_i}$ at $p_i$. Then, the minimal resolution is $f : Y' \to \overline{Y}$, with $\mathbf{R} f_*\sO_{Y'}=\sO_{\overline{Y}}$ and exceptional locus the union of a chain of rational curves 
$$C_i \coloneqq \bigcup_{j=1}^{a_i-1} C_{i,j} \qquad j=1,..,r_i-1$$ above every point $p_i$. We write $X'\coloneqq X\cup (\cup_{i,j}C_{i,j})$ for the union of the exceptional curves with the strict transform of $X$. 

The derived McKay correspondence of \cite{BKR01} states that there is an equivalence 
$$\Psi\colon D(Y')\to D(Y), $$
which in turn induces an equivalence between $\sD$ and the full triangulated subcategory $\sD'$ of sheaves supported on $X'$. 
More precisely, $Y'$ can be realized as a moduli space of sheaves of $Y$ as follows. 

\begin{definition}
A $\mu$-equivariant quotient sheaf $\sO_{\Tot(\omega_E)}\twoheadrightarrow F$ is a $\mu$-\emph{cluster} if $H^0(F)$ is isomorphic to the regular representation of $\mu$ as a $\C[\mu]$-module. We regard $F$ as an element of $\Coh(Y)$.
\end{definition}

Let $\mu$-$\Hilb(Y)$ be the scheme parameterizing $\mu$-clusters on $Y$. Then, $\mu$-$\Hilb(Y)$ is a crepant resolution of $Y$ \cite{BKR01}, and the equivalence $\Psi$ is the Fourier-Mukai transform with kernel the universal family on $\mu$-$\Hilb(Y)\times Y$. Therefore we may pick $Y'\coloneqq \mu$-$\Hilb(Y)$  .

The inverse image of $\Coh(Y)$ under $\Psi$ is the abelian category of \textit{perverse sheaves} on $Y'$, which is obtained from $\Coh(Y')$ with the tilt below (we follow the notation of \cite{Bri02_flops} and \cite{VdB04_flops}). 
Let $\sC$ be the abelian subcategory of $D(Y')$ consisting of sheaves $E$ such that $\mathbf{R}f_*E=0$, and define a torsion pair:
\begin{equation}\label{eq_pair_perverse_surface}
\begin{split}
\sT'_0&\coloneqq \left\lbrace T\in \Coh(Y') \st \mathbf{R}^1f_*T=0  \right\rbrace  \\
\sF'_0&\coloneqq \left\lbrace F\in \Coh(Y') \st f_*F=0  \mbox{ and }\Hom(\sC,
F)=0  \right\rbrace 
\end{split}
\end{equation} 

We denote by $\Per(Y')$ the tilt of $
\Coh(Y')$ along the pair \eqref{eq_pair_perverse_surface}, i.e. $ \Per(Y')\coloneqq \pair{\sF'_0[1],\sT'_0}$. This results in a diagram whose horizontal arrows are equivalences:
\begin{center}
\begin{tikzcd}
\Per(Y') & \Coh(Y) \\
	\Coh(Y') & \Psi(\Coh(Y'))
	\arrow["\Psi", from=1-1, to=1-2]
	\arrow["\Psi", from=2-1, to=2-2]
	\arrow["tilt"', leftrightsquigarrow, from=1-1, to=2-1]
	\arrow["tilt", leftrightsquigarrow, from=1-2, to=2-2]
\end{tikzcd}
\end{center}

Denote by $\sB$ and $\sB'$ the intersections of $\Coh(Y)$ and $\Coh(Y')$, respectively, with $\sD$ and $\sD'$. Observe that $(\sT'_0\cap \sD',\sF'_0\cap \sD')$ is a torsion pair of $\sB'$: we denote by $\Per(X')$ the corresponding tilt. Define $\mathcal{A}_R\coloneqq \Psi(\sB')$. Then, restricting the above diagram to $\sD'$ and $\sD$ yields: 
\begin{center}
\begin{tikzcd}
\Per(X') & \sB \\
	\sB' & \mathcal{A}_R
	\arrow["\Psi", from=1-1, to=1-2]
	\arrow["\Psi", from=2-1, to=2-2]
	\arrow["tilt"', leftrightsquigarrow, from=1-1, to=2-1]
	\arrow["tilt", leftrightsquigarrow, from=1-2, to=2-2]
\end{tikzcd}
\end{center}

In particular, the equivalence $\Psi$ maps the simple objects of $\Per(X')$ into simple sheaves in of $\sB$:
\begin{equation*}
    \begin{split}
        \sO_{C_{i,j}}(-1) \longmapsto t_i^j;\\
        \sO_{C_i}(C_i)[1] \longmapsto t_i^0 
    \end{split}
\end{equation*}
and moreover $\sO_{X'} \longmapsto \sO_X$ \cite[Sec. 3.5]{VdB04_flops}.

\begin{remark}
The category $\Per(Y')$ is usually called the category of 0-perverse sheaves. Its dual category of (-1)-perverse sheaves is used in \cite{Bri02_flops} and \cite{Tod13_extremal}, and the two are compared in \cite[Sec. 3.5]{VdB04_flops}. Our choice of $\Psi$, and therefore of the perversity of $\Per(Y')$, has the advantage of mapping skyscraper sheaves to clusters.
\end{remark}

\begin{lemma}\label{lem_A_Noeth}
$\mathcal{A}_R$ is Noetherian. 
\end{lemma}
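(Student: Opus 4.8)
The plan is to reduce the statement to a Noetherianity property of the geometric category $\sB'$ and then invoke standard facts about coherent sheaves. First I would observe that Noetherianity of an abelian category is a purely categorical, isomorphism-invariant condition: it asks only that every ascending chain of subobjects stabilize, and an equivalence of categories induces an inclusion-preserving bijection between subobjects. Since $\sA = \Psi\sB'$ and $\Psi$ is an equivalence, it therefore suffices to prove that $\sB'$ is Noetherian.

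Next I would identify $\sB'$ explicitly. By construction $\sB' = \Coh(Y')\cap\sD'$ is the full subcategory of $\Coh(Y')$ consisting of coherent sheaves whose support is contained in $X'$. The key geometric input is that $Y'$, being a crepant resolution of $\Tot(\omega_X)$, is a smooth quasi-projective surface, and in particular a Noetherian scheme. It is a standard fact that on a Noetherian scheme every coherent sheaf is a Noetherian object: covering $Y'$ by finitely many affine opens $\Spec A_i$ with each $A_i$ Noetherian, a coherent sheaf restricts to finitely generated $A_i$-modules, on which ascending chains of submodules stabilize, and quasi-compactness then forces the global chain to stabilize. Hence $\Coh(Y')$ is a Noetherian abelian category.

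Finally I would transfer this to $\sB'$. Because the support of a subsheaf is always contained in the support of the ambient sheaf, the condition $\supp F\subseteq X'$ is inherited by every subsheaf; thus $\sB'$ is a Serre subcategory of $\Coh(Y')$, and in particular every subobject in $\Coh(Y')$ of an object of $\sB'$ already lies in $\sB'$. Consequently the subobject lattices agree, and the ascending chain condition in $\Coh(Y')$ restricts verbatim to $\sB'$. This shows $\sB'$, and therefore $\sA$, is Noetherian. The argument is essentially routine; the only points needing care are checking that $\sB'$ is closed under subobjects inside $\Coh(Y')$, so that ACC genuinely descends, and recording that $Y'$ is a Noetherian scheme. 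There is no substantial obstacle once the equivalence $\Psi$ has been used to move the question onto the honest scheme $Y'$.
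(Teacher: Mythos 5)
Your argument is correct and follows the same route as the paper, whose proof simply asserts that the statement is straightforward because $\sB'$ is Noetherian; you have filled in exactly the details that assertion leaves implicit (invariance of Noetherianity under the equivalence $\Psi$, Noetherianity of $\Coh(Y')$ for the Noetherian scheme $Y'$, and closure of $\sB'$ under subobjects). No gaps.
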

\begin{proof}
This is straightforward, because $\sB'$ is Noetherian.
\end{proof}


To classify objects of $\mathcal{A}_R$ we will describe it explicitly as a tilt of $\sB$. 
Define $\sF'$ to be the full additive subcategory of $\sB'$ generated as the extension closure of subsheaves of the normal bundles $\sO_{C_i}(C_i)$:
$$\sF'=\pair{F \st F \subseteq \sO_{C_{i}}(C_i)\in\sB' \text{ for }i=1,...,r}$$
and $\sT'$ to be its left orthogonal in $\sB'$.
Denote by $\sF$ (resp. $\sT$) the subcategories $\Psi(\sF')$ (resp. $\Psi(\sT')$) of $\mathcal{A}_R$.

\begin{lemma}\label{lem_(T'F')_tp}
$(\sT',\sF')$ is a torsion pair in $\sB'$. 
\end{lemma}

\begin{proof} We follow an argument similar to \cite[Lemma 3.2]{Tra17}.
We need to show that every sheaf $E\in \sB'$ fits in a short exact sequence 
$$T\to E\to F$$
with $T\in\sT'$, $F\in\sF'$. If $E\in\sT'$, we are done. Otherwise, $\Hom(E,\sF)\neq 0$, so there exists $F_1\in\sF'$ fitting in a short exact sequence 
$$M_1\to E\to F_1.$$
If $\Hom(M_1,\sF')\neq 0$, repeat this process, and obtain
$$M_2\to M_1\to F_2.$$
By iterating this, we get a chain of inclusions
$$ ...\subset M_k \subset M_{k-1} \subset ... \subset M_1 \subset E $$
with quotients in $\sF'$. Then, the chain must terminate by Lemma \ref{lem_inclusion-chain-terminate}. This means that there exists $n$ for which $\Hom(M_n,\sF')=0$. Let $F$ be the cokernel of the inclusion $M_n\subset E$, then the sequence
$$M_n\to E\to F$$
is the desired one.
\end{proof}

\begin{lemma}[{see \cite[Lemma 3.1]{Tra17}}]\label{lem_inclusion-chain-terminate}
If there is a series of inclusions in $\sB'$, say
$$ ...\subset M_k \subset M_{k-1} \subset ... \subset M_0 $$
whose quotients lie in $\sF'$, then the sequence must eventually stabilize.
\end{lemma}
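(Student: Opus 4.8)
The plan is to attach to every object of $\sB'$ a single nonnegative integer, the total generic multiplicity along the exceptional curves, and to show that it strictly decreases at every proper step of the chain. For each component $C_{i,j}$ let $\eta_{i,j}$ denote its generic point; since $X'$ is a reduced curve, $\sO_{X',\eta_{i,j}}=\kappa(\eta_{i,j})$ is a field, and for $M\in\sB'$ I would set
$$ n(M)\coloneqq \sum_{i,j}\dim_{\kappa(\eta_{i,j})} M_{\eta_{i,j}}, $$
the sum of the generic ranks of $M$ along the $C_{i,j}$. Because localization at a generic point is exact, $n$ is additive on short exact sequences in $\sB'$, and because each $M_k$ is a subsheaf of $M_0$ we have $0\le n(M_k)\le n(M_0)<\infty$ for every $k$.

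The structural input I need is that the quotients can never be supported in dimension zero, i.e.\ every nonzero object of $\sF'$ has $n>0$. Here I would use that $\sF'$ is the extension closure of subsheaves of the line bundles $\sO_{C_i}(C_i)$, which are pure of dimension one on $X'$. A subsheaf of a pure one-dimensional sheaf is again pure, and an extension of pure one-dimensional sheaves is pure (any zero-dimensional subsheaf would inject into one of the two pure factors, hence vanish). Consequently every nonzero $Q\in\sF'$ is a pure one-dimensional sheaf supported on $\cup_{i,j}C_{i,j}$, so it has positive generic rank along at least one component and $n(Q)>0$.

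Granting this, the argument concludes immediately. For each $k$ the defining short exact sequence $0\to M_k\to M_{k-1}\to Q_k\to 0$ with $Q_k\in\sF'$ yields $n(M_{k-1})=n(M_k)+n(Q_k)$ by additivity. If the inclusion $M_k\subset M_{k-1}$ is proper then $Q_k\neq 0$, whence $n(Q_k)>0$ and $n(M_{k-1})>n(M_k)$. Thus at every proper step $n$ strictly decreases; being a sequence of nonnegative integers it cannot decrease infinitely often, so only finitely many inclusions are proper and the chain stabilizes.

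I expect the main obstacle to be the purity claim for $\sF'$, together with the companion observation that although the sheaves $M_k$ need not themselves be supported on the exceptional locus (only their successive quotients are), the invariant $n$—which records exactly the portion of each $M_k$ living generically along the $C_{i,j}$—is nevertheless the quantity that governs the chain, since changes off the exceptional locus cannot occur. Once purity is established, additivity of $n$ and the descending-chain principle for nonnegative integers are entirely routine.
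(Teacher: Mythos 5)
Your proof is correct, but it takes a genuinely different and substantially more elementary route than the paper's. The paper first refines the chain so that each quotient is a torsion-free subsheaf $L_k\subseteq\sO_C(C)$ with connected support $D_k$ in a single chain $C=C_i$, and then runs an induction on the length of $C$: applying $\Hom(-,\sO_C(C))$ to $0\to M_{k+1}\to M_k\to L_k\to 0$ shows that the non-negative integer $\hom(M_k,\sO_C(C))$ drops by at least $-D_k\cdot C\geq 0$ at each step, and the degenerate case $D_k\cdot C=0$ forces $L_k$ onto a strictly shorter subchain $C'$, where the argument is repeated with $\sO_{C'}(C')$. Your argument replaces all of this with a single additive invariant --- the total generic multiplicity of $M_k$ along the exceptional components --- which is strictly positive on every nonzero object of $\sF'$ (because the generators of $\sF'$ are subsheaves of line bundles on the reduced curves $C_i$, hence torsion-free with one-dimensional support, and positivity propagates through extensions by additivity), and which therefore strictly decreases at every proper inclusion. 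This removes both the reduction to connected support and the induction on chain length, and is the cleaner argument. One small correction: objects of $\sB'$ are sheaves on the surface $Y'$ that are only set-theoretically supported on $X'$, so the stalk $M_{\eta_{i,j}}$ is in general a finite-length torsion module over the discrete valuation ring $\sO_{Y',\eta_{i,j}}$ rather than a vector space over $\kappa(\eta_{i,j})$; you should define $n(M)$ as the sum of the lengths of these stalks. Length is still additive in short exact sequences and finite, so nothing else in your argument changes, and on the generators of $\sF'$ (which are genuine $\sO_{C_i}$-modules) it agrees with the dimension you wrote.
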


\begin{proof}
We follow an argument similar to \cite[Lemma 3.1]{Tra17}.
First, we may assume that all the quotients $F_k\coloneqq M_0/M_k$ are supported on one curve $C\coloneqq C_i$. Moreover:

\begin{claim*}
We may assume that for all $k$, the quotients $F_k$ are torsion free sheaves $L_k\subset\sO_C{(C)}$, such that $L_k$ has connected support $D_k\subset C$.
\end{claim*}

Indeed, by definition of $\sF'$ every $F_k$ admits a surjection to some $L_k\subset \sO_{C}(C)$. By restricting $L_k$ to one of the connected components $D_k$ of its support, we may assume that $L_k$ has connected support. 
So we have quotients 
$$F_k\twoheadrightarrow  L_k$$
which define exact sequences 
$$0\to M_k^{(1)}\to M_k \to  L_k\to 0.$$
The quotient $F_k^{(1)}$ of $M_{k+1}\to M_k^{(1)}$ fits into an exact sequence 
$$ F_k^{(1)} \to F_k \to L_k $$
where $\ch{1}{(F_k^{(1)})}=\ch 1{(F_k)}- \ch 1{(L_k)} $ is a positive linear combination $\sum a_j [C_{i,j}]$ with coefficients strictly smaller than those of $\ch1{(F_k)}$. We can then repeat this process for the map $M_{k+1}\to M_k^{(1)}$ until we get a finite chain of inclusions 
$$M_{k+1}\subset M_k^{(n)} \subset ... \subset M_k^{(1)} \subset M_k$$
satisfying the statement of the claim.


We proceed to show that the sequence of inclusions must terminate with an induction on the length $l$ of the chain of rational curves $C$. 

In order to see this, apply the functor $\Hom(-,\sO_C(C))$ to the short exact sequence 
\begin{equation}\label{eq_Mi1_Mi_Li}
    0\to M_{k+1}\to M_k \to  L_k\to 0.
\end{equation}

For $L_k = \sO_C(C)$, one computes $\ext^1(\sO_C(C),\sO_C(C))=0$, hence $\Hom(M_i,\sO_C(C))>\Hom(M_{i+1},\sO_C(C))$.

If $L_k\subsetneq \sO_C(C)$, one has \begin{equation}\label{eq_ext2_vanishes}
    \Ext^2(L_k,\sO_C(C))\simeq \Hom (\sO_C(C),L_k)=0,
\end{equation}  and obtains
\begin{equation}
    \begin{split}
        \hom(M_k,\sO_C(C)) - \hom(M_{k+1},\sO_C(C)) = \\ \chi(L_k,\sO_C(C)) + \left(\ext^1(M_k,\sO_C(C))-\ext^1(M_{k+1},\sO_C(C))\right).
    \end{split}
\end{equation}
Observe that $\chi(L_k,\sO_C(C)) = - (D_k). C\geq 0$ by Hirzebruch-Riemann-Roch, and that $$\ext^1(M_k,\sO_C(C))-\ext^1(M_{k+1},\sO_C(C))\geq 0$$
because of \eqref{eq_ext2_vanishes}.

If $l=1$, we must have $D_k=C$ and $-D_k \cdot C=2$. This shows that if $L_k\neq 0$, then $\Hom(M_k,\sO_C(C))>\Hom(M_{k+1},\sO_C(C))$, whence the chain of subobjects must terminate. 

If $l>1$, the only way the sequence does not terminate is that all $L_k$ satisfy $D_k\cdot C=0$. This is only possible if no $D_k$ contains the terminal curves of the chain, $C_1$ and $C_l$, in their support. In other words, $L_k\subset \sO_{C}(C)_{|C'}\simeq \sO_{C'}(C')$ where $C'=\cup_{j=2}^{l-1}C_j$ is a shorter chain. Then, we can repeat the argument above applying the functor $\Hom(-,\sO_{C'}(C'))$ to the sequences \eqref{eq_Mi1_Mi_Li}. Eventually, the problem is reduced to the case $l=1$, and the process must terminate.
\end{proof}

\begin{proposition}
We have $\sF'=\sF'_0$. Therefore, 
\[ \mathcal{A}_R= \pair{\sT,\sF} \quad \mbox{ and } \quad \sB=\pair{\sF[1],\sT}. \]
\end{proposition}

\begin{proof}
Suppose $E\in \sF'$. We may assume that $E$ is supported on just one curve $C=C_i$. Moreover, $E$ is a repeated extension of subsheaves of $\sO_{C}(C)$, so we may induce on the number of its factors and reduce to the case where $E$ is a subsheaf of $\sO_C(C)$. 

It follows from left exactness of $f_*$ that $f_*E=0$. Now suppose $U\in \sC$. Composing a map $U\to E$  with the inclusion $E\subset \sO_C(C)$ yields an element of $\Hom_{Y'}(U,\sO_C(C))$. $U$ must be supported on $C$ since $f\colon Y' \to Y$ is an isomorphism off $C$. Therefore we have isomorphisms 
\begin{equation}
\label{eq_gimmick}
\Hom_{Y'}(U,\sO_C(C))\simeq \Hom_C(U,\sO_C(C)) \simeq \Ext^1_C(\sO_C,U)^* = H^1(C,U)^*=0 
\end{equation}
since $U\in \sC$. We conclude $\Hom(U,E)=0$ for all $U\in \sC$, and $E\in \sF_0'$.


Conversely, suppose $E\in\sF'_0$, and assume $E\notin \sF'$. Then there is a short exact sequence 
\[ G\to E\to H \]
with $H\in \sF'$ and $G\in \sT'$ since $(\sT',\sF')$ is a torsion pair by Lemma \ref{lem_(T'F')_tp}. Moreover, $G\in \sF_0'$ because $E\in \sF_0'$. Then $G\in \sT'\cap \sF_0'$ satisfies
\begin{itemize}
\item $\Hom(G, \sF')=0$;
\item $f_*G=0$;
\item $\Hom(\sC,G)=0$.
\end{itemize}
We must have $\mathbf{R}^1f_*G\neq 0$, otherwise $\mathbf{R}f_*G=0$ and $G\in \sC$. Therefore, $H^1(C,G)\neq 0$. Arguing as in \eqref{eq_gimmick} we get $\Hom_{Y'}(G,\sO_C(C))\neq 0$, contradicting $\Hom(G,\sF')=0$. We conclude that $G=0$, and therefore $E\in \sF'$.
\end{proof}

\subsection{Classification of objects in \texorpdfstring{$\mathcal{A}_R$}{the heart}}\label{ssec_ClassificationA}
Next, we classify of objects in $\mathcal{A}_R$. The strategy is to explicitly compute the image under $\Psi$ of objects of $\sB'$, which are obtained as finite, repeated extensions of torsion sheaves and line bundles on each component of $X'$. We start describing sheaves in $\sF'$: these are precisely all elements of $\sB'$ whose image is a shift of a sheaf in $\mathcal{A}_R$. 

Given a subchain of rational curves $D \subseteq C$, there exists a maximal subsheaf $L_D\subseteq \sO_{C}(C)$ supported on $D$.

\begin{lemma}\label{lem_description of LD}
Fix $C=C_i$, let $D\subseteq C$ be a subchain of rational curves, and let $L_D$ as above. Write $C_{d_1},..., C_{d_l}$ for the irreducible components of $D$ (with $(d_1,...,d_l)$ consecutive elements of $\set{1,...,r_i-1}$). Then $L_D$ is obtained from $\sO_{C_{d_1}}(-2)$ with repeated extensions by the sheaves $\sO_{C_{d_i}}(-1)$, with $i=d_2,...,d_l$. 
In particular, there is a short exact sequence 
\begin{equation}
    \label{eq_LD_LD'_Ot}
 L_D \to L'_D \to \sO_t
\end{equation}
where $t\in C_{d_1}$ and $L'_D$ is obtained by repeated extensions of $\sO_{C_{d_i}}(-1)$, with $i=d_1,...,d_l$.
\end{lemma}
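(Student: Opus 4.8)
The plan is to understand the sheaf $L_D \subseteq \sO_C(C)$ very explicitly in terms of the chain geometry. The key geometric input is that $C = C_i$ is a chain of $(-2)$-curves $C_{i,1}, \dots, C_{i,r_i-1}$ (coming from an $A_{r_i-1}$ configuration resolving an $A_{r_i-1}$ singularity), so each $C_{i,j}^2 = -2$ and consecutive curves meet transversally in one point. The normal bundle $\sO_C(C)$ restricted to a single component $C_{i,j}$ is governed by $C_{i,j}\cdot C = C_{i,j}^2 + \#\{\text{adjacent components}\}$; for an interior component this is $-2 + 2 = 0$, giving degree $0$, while for a terminal component it is $-2+1 = -1$. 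I would first record these intersection numbers precisely, as they control all the degree computations below.

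\textbf{Main construction.}

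Next I would describe $L_D$ directly. Since $L_D$ is the \emph{maximal} subsheaf of $\sO_C(C)$ supported on the subchain $D = C_{d_1} \cup \dots \cup C_{d_l}$, it is the kernel of the restriction $\sO_C(C) \to \sO_{C\setminus D}(C)$, equivalently the subsheaf of sections vanishing off $D$. The strategy is to build $L_D$ from one terminal component inward. I would show that the restriction of $L_D$ to the initial component $C_{d_1}$ is $\sO_{C_{d_1}}(-2)$: the section must vanish at the node connecting $C_{d_1}$ to $C_{d_1 - 1}$ (the component \emph{outside} $D$) to remain supported on $D$, dropping the degree from $0$ (interior) or $-1$ (terminal) appropriately down to $-2$ because of the extra vanishing condition imposed by maximality at the boundary node. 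The remaining components $C_{d_2}, \dots, C_{d_l}$ contribute factors $\sO_{C_{d_i}}(-1)$, glued by extensions along the internal nodes of $D$. Assembling these via the normalization sequence of the chain yields the filtration with graded pieces $\sO_{C_{d_1}}(-2)$ and $\sO_{C_{d_i}}(-1)$ for $i = d_2, \dots, d_l$, which is exactly the first assertion.

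\textbf{The short exact sequence.}

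For the sequence \eqref{eq_LD_LD'_Ot}, I would exhibit $L'_D$ as the subsheaf of $\sO_C(C)$ with the \emph{same} graded components except that the initial piece is upgraded from $\sO_{C_{d_1}}(-2)$ to $\sO_{C_{d_1}}(-1)$. Since $\sO_{C_{d_1}}(-2) \hookrightarrow \sO_{C_{d_1}}(-1)$ with cokernel $\sO_t$ for a point $t \in C_{d_1}$, and since this difference is concentrated on the initial component only, the inclusion $L_D \hookrightarrow L'_D$ has cokernel $\sO_t$. Concretely I would construct the diagram comparing the two filtrations: they agree on the sub-filtration supported on $C_{d_2} \cup \dots \cup C_{d_l}$, so the snake lemma (or a direct diagram chase on the two short exact sequences defining the initial pieces) produces \eqref{eq_LD_LD'_Ot} with $t \in C_{d_1}$.

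\textbf{Expected obstacle.}

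The step I expect to require the most care is verifying that maximality of $L_D$ forces precisely the degree drop to $\sO_{C_{d_1}}(-2)$ on the initial component, rather than $\sO_{C_{d_1}}(-1)$. This hinges on correctly bookkeeping the vanishing conditions at nodes: a section of $\sO_C(C)$ supported on $D$ is constrained at the node where $D$ meets $C\setminus D$, and one must check that imposing support on $D$ (not merely on a single component) forces vanishing there, which is what distinguishes $L_D$ from $L'_D$. I would make this rigorous by working with the explicit local model of the node and the gluing data for sections of the normal bundle across the chain, being careful that the maximal subsheaf condition is about scheme-theoretic support and saturation, not just set-theoretic support.
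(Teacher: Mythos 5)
Your treatment of the first statement is essentially the paper's argument: the paper runs an induction on $l$, peeling off the last component via the surjection $L_D \to \sO_{C_{d_l}}(-1)$ whose kernel is $L_{\overline{D-C_{d_l}}}$, and your component-by-component degree bookkeeping produces the same filtration. One imprecision there is worth fixing: for $l>1$ the \emph{image} of $L_D$ in $\sO_C(C)|_{C_{d_1}}$ has degree $-1$, not $-2$, because sections of $L_D$ are not required to vanish at the node $C_{d_1}\cap C_{d_2}$ (that node is internal to $D$). The piece $\sO_{C_{d_1}}(-2)$ is the maximal \emph{subsheaf} of $L_D$ supported on $C_{d_1}$ alone, i.e.\ the bottom of the filtration: one vanishing comes from the node with the complement of $D$ (or from the terminal degree $-1$), the other from the node with $C_{d_2}$. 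So the "expected obstacle" you flag is real but slightly misdiagnosed --- the second vanishing is forced by the support condition on $C_{d_1}$ inside $D$, not by maximality at the boundary of $D$ in $C$.

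The second statement is where there is a genuine gap: the sheaf $L'_D$ you describe does not exist as a subsheaf of $\sO_C(C)$. Since $\sO_C(C)$ is a line bundle on $C$, every subsheaf supported on $D$ is contained in the maximal one, which is $L_D$ itself; but $L'_D$ is supported on $D$ and strictly contains $L_D$ (its class is $[L_D]+[\sO_t]$), so it cannot embed in $\sO_C(C)$. The object has to be built abstractly. The paper does this by writing $L_D$ as an extension $(\epsilon)\colon \sO_{C_{d_1}}(-2)\to L_D\to R_D$, choosing $t\in C_{d_1}$ away from the nodes, and pushing $(\epsilon)$ forward along $\sO_{C_{d_1}}(-2)\hookrightarrow\sO_{C_{d_1}}(-1)$, using the isomorphism $\Ext^1(R_D,\sO_{C_{d_1}}(-2))\simeq\Ext^1(R_D,\sO_{C_{d_1}}(-1))$ that holds because $t\notin\supp R_D$; the pushout square then yields $L_D\to L'_D\to\sO_t$. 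Your snake-lemma comparison of the two extensions is exactly the right mechanism once $L'_D$ is defined this way (note also that what the two extensions share is the common \emph{quotient} $R_D$, not a common sub-filtration), but as written the construction of $L'_D$ is the missing step.
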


\begin{proof}
Proceed by induction on the length $l$ of the chain $D$. If $l=1$ and $D=C_d$, one readily verifies that $L_D\simeq\sO_{C_d}(-2)$. Suppose then that $l>1$. Then, observe that $L_{D}$ restricts to $C_{d_l}$ to a line bundle of degree $-1$, because either $d_l<r_i-1$, and then sections of $L_D$ must vanish at the intersection $C_{d_l}\cap C_{d_l+1}$ or because $d_l=r_i-1$, and $\sO_C(C)$ has degree $-1$ on $C_{r_i-1}$. The kernel of this restriction is exactly the maximal subsheaf of $\sO_C(C)$ supported on $\overline{D-C_{d_l}}$. In other words, $L_D$ fits in a short exact sequence
$$ L_{\overline{D-C_{d_l}}} \to L_D \to \sO_{C_{d_l}}(-1) $$
so by induction $L_D$ has the asserted structure. 

For the second statement, fix a point $t\in C_{d_1}$ away from the intersections, and consider the cokernel
$$ (\epsilon)\colon \quad \sO_{C_{d_1}}(-2) \to L_D \to R_D. $$
From the sequence 
$$ \sO_{C_{d_1}}(-2) \to \sO_{C_{d_1}}(-1) \to \sO_t $$
one sees that $\Ext^1(R_D, \sO_{C_{d_1}}(-2))\simeq \Ext^1(R_D, \sO_{C_{d_1}}(-1))$ because $t\notin \text{Supp} R_D$. Pushing forward the extension class $(\epsilon)$ to $\Ext^1(R_D, \sO_{C_{d_1}}(-1))$ produces an object $L_D'$ as in the statement.
\end{proof}

\begin{lemma}\label{lem_classification-torsion-in-A}
Suppose an object $T\in \mathcal{A}_R$ is supported on an orbifold point $p_i$. Then $T$ is obtained by repeated extensions of the following objects:
\begin{enumerate}[(i)]
    \item \label{itm_tij}$t_i^j$ with $j\neq 0$;
    \item \label{itm_clusters}clusters supported at $p_i$;
    \item \label{itm_N[-1]}$N[-1]$ where $N$ is the proper quotient of a cluster.
\end{enumerate}
\end{lemma}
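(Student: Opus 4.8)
The plan is to push the entire statement to the resolution $Y'$ through the McKay equivalence $\Psi$, where every object of $\sA$ becomes an honest coherent sheaf, to classify the relevant sheaves there using the torsion pair $(\sT',\sF')$ together with Lemma \ref{lem_description of LD}, and then to transport the resulting filtration back.

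First I would set $G\coloneqq\Psi^{-1}T\in\sB'$, a coherent sheaf on $Y'$ supported on the chain $C_i$. Under $\Psi$ the three families of building blocks become very concrete: $t_i^j$ with $j\neq0$ is $\sO_{C_{i,j}}(-1)$, a cluster is a skyscraper $\sO_t$, and a type (iii) object $N[-1]$ is exactly an element of $\sF'$, i.e.\ a subsheaf of $\sO_{C_i}(C_i)$ (this last identification uses that $\sF=\Psi\sF'$ together with the fact that, since $\sB=\langle\sF[1],\sT\rangle$, the objects of $\sF$ are precisely the shifts $(\text{sheaf})[-1]$). As $\Psi$ is an exact equivalence $\sB'\to\sA$, a filtration of $G$ with graded pieces of these three kinds transports to a filtration of $T$ of the desired shape, so it suffices to filter $G$.

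I would then split $G$ using the torsion pair: $0\to G_t\to G\to G_f\to0$ with $G_t\in\sT'$ and $G_f\in\sF'$, and handle the two pieces separately. The piece $G_f$ is an iterated extension of subsheaves $S\subseteq\sO_{C_i}(C_i)$; for each such $S$, which I may take to have connected support after splitting off direct summands, I would write $0\to S\to L_D\to\sO_Z\to0$ with $L_D$ the maximal subsheaf on that support and $\sO_Z$ zero-dimensional, and combine this with the sequence $0\to L_D\to L'_D\to\sO_t\to0$ of Lemma \ref{lem_description of LD}. Applying $\Psi$ and rotating the resulting triangles identifies $\Psi(S)$ with a single shifted sheaf $N_S[-1]$ in which $N_S$ is an extension of a proper quotient of a cluster (produced from Lemma \ref{lem_description of LD}) by an iterated extension of clusters (produced from $\sO_Z$)---precisely the form prescribed in (iii). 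This matching is the step where I would have to be careful about which arrows are mono or epi in $\sB$.

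For $G_t\in\sT'$ I would show it is built from the $\sO_{C_{i,j}}(-1)$ (type (i)) and skyscrapers $\sO_t$ (type (ii)). Peeling off the zero-dimensional torsion subsheaf reduces to a pure one-dimensional $P\in\sT'$; the defining condition $\Hom(\sT',\sF')=0$ then forces $P$ to be supported on the reduced chain and to carry no sub-line-bundle of degree $\leq-2$ on any component, since such a piece would map to $\sO_{C_{i,j}}(-2)=L_{C_{i,j}}\in\sF'$. Restricting $P$ to the components through the normalization sequences therefore expresses it via the line bundles $\sO_{C_{i,j}}(d)$ with $d\geq-1$, each of which is an extension of $\sO_{C_{i,j}}(-1)$ by skyscrapers. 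I expect the genuine obstacle to lie exactly here: converting the abstract orthogonality defining $\sT'$ into the concrete conclusions ``reduced support'' and ``component degrees $\geq-1$''. For this I would induct on the length of the support chain and reuse the termination mechanism of Lemma \ref{lem_inclusion-chain-terminate}.
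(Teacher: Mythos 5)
Your proposal is correct and follows essentially the same route as the paper: transport everything to $\sB'$ via $\Psi$, split along the torsion pair $(\sT',\sF')$, reduce the $\sF'$-part to the inclusion $L\subseteq L_D$ combined with the sequence of Lemma \ref{lem_description of LD} to produce the type (iii) objects, and handle the $\sT'$-part by peeling off zero-dimensional torsion and inducting on the length of the supporting chain. The only slip is one of wording in the $\sT'$ step: the orthogonality $\Hom(\sT',\sF')=0$ rules out line-bundle \emph{summands} (or quotients) of degree $\le -2$ on a component---not sub-line-bundles, which every line bundle carries in arbitrarily negative degree---because $\Hom(\sO_{C_{i,j}}(d),\sO_{C_{i,j}}(-2))\neq 0$ exactly when $d\le -2$ and $\sO_{C_{i,j}}(-2)=L_{C_{i,j}}\in\sF'$; this is precisely the paper's argument and gives your intended conclusion $d\ge -1$.
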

\begin{proof}
This is equivalent to classifying sheaves of $\sB'$ supported on $C\coloneqq C_i$. First, we consider sheaves in $\sF'$. A sheaf in $\sF'$ is an extension of subsheaves $L\subset\sO_C(C)$ with connected support. Any such inclusion must factor thorugh an inclusion $L\subseteq L_D$, where $L_D$ is as in Lemma \ref{lem_description of LD} and the cokernel $L_D/L$ is torsion. We have that $\Psi(L)[1]$ and $\Psi(L_D)[1]$ are sheaves on $X$, so applying the McKay functor to 
$$ L\to L_D \to L_D/L $$
we obtain a short exact sequence of sheaves in $\sB$:
$$  M \to \Psi(L)[1]\to \Psi(L_D)[1], $$
where $M$ is obtained by repeated extensions of clusters. Now we claim that $\Psi(L_D)[1]$ is a proper quotient of a cluster. In fact, apply $\Psi$ to the exact sequence \eqref{eq_LD_LD'_Ot} of Lemma \ref{lem_description of LD}: $\Psi(\sO_t)$ is a cluster, and $\Psi(L'_D)$ is a sheaf obtained by repeated extensions of $t_i^j$, $j\neq 0$. This yields a short exact sequence in $\sB$
$$ 0\to \Psi(L'_D)\to \Psi(\sO_t) \to \Psi(L_D)[1] \to 0 $$
which exhibits $\Psi(L_D)[1]$ as the quotient of a cluster. This exhausts part \eqref{itm_N[-1]}.

Now, consider a sheaf $B\in\sT'$. The torsion part $B_{tor}$ of $B$ is obtained by repeated extensions of points, so $\Psi(B_{tor})$ is as in part \eqref{itm_clusters}. We may then assume that $B$ is torsion free with connected support. If $B$ is supported on a single irreducible component $C_i$, then $B$ is a sum of line bundles of the form $\sO_{C_i}(k)$. Since $\Hom(B,\sF')=0$, we must have $k>-2$. Then $\Psi(B)$ is obtained as an extension of $t_i^j$ by clusters. 
If $B$ is supported on more than one irreducible component, suppose that $C_j$ is a terminal component of the support of $B$ and consider the restriction of $B$ to $C_j$. Then there is an exact sequence
$$ B'\to B\to B_{|C_j} $$
where $B'$ is supported on a shorter chain. $B_{|C_j}$ is supported on one irreducible curve, so it is as above. If $B'\in\sT'$, we repeat this procedure. Otherwise, $B'$ fits in a short exact sequence of sheaves 
$$ B''\to B'\to F $$
with $B''\in\sT'$ and $F\in\sF'$. Sheaves in $\sF'$ are classified above, so we can assume that $B'\in\sT'$ and conclude by induction on the length of the supporting chain.
\end{proof}

As a consequence of the results in this section, we obtain the following description of objects in $\mathcal{A}_R$:

\begin{proposition}\label{prop_classification-objects-A}
Objects in $\mathcal{A}_R$ are obtained by repeated extensions from:
\begin{enumerate}[(i)]
\item line bundles on $X$;
\item skyscraper sheaves $\sO_q$ for $q\in X-\cup \set{p_i}$;
\item objects supported on the $p_i$'s, classified in Lemma \ref{lem_classification-torsion-in-A}.
\end{enumerate}
\end{proposition}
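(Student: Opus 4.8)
The plan is to split a general object of $\sA$ by means of the torsion pair $(\sT,\sF)$ and to treat the two pieces separately, reducing everything either to the elementary sheaf theory of the smooth orbifold curve $X$ or to the classification already obtained in Lemma \ref{lem_classification-torsion-in-A}. So I would begin with an arbitrary $A\in\sA$ and use $(\sT,\sF)$ to produce a short exact sequence in $\sA$
$$ 0\to A_\sT\to A\to A_\sF\to 0,\qquad A_\sT\in\sT,\ A_\sF\in\sF. $$
Since $\sF=\Psi\sF'$ and every object of $\sF'$ is a subsheaf of some $\sO_{C_i}(C_i)$, hence supported on the exceptional curves $\bigcup_i C_i$ lying over the orbifold points, the object $A_\sF$ is supported on $\set{p_1,p_2,p_3}$. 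Decomposing it according to its (disjoint) support and invoking Lemma \ref{lem_classification-torsion-in-A} exhibits $A_\sF$ as a repeated extension of objects of type (iii); this disposes of the $\sF$-part.

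It remains to analyse $A_\sT$. The key point is that $\sT\subseteq\sB$, so $A_\sT$ is an honest coherent sheaf on $X$; moreover $\sT$ is the torsion-free class of the torsion pair $(\sF[1],\sT)$ on $\sB$, and is therefore closed under subobjects in $\sB$. I would first take the torsion/torsion-free sequence $0\to\tau\to A_\sT\to V\to 0$ of the sheaf $A_\sT$: here $V$ is a vector bundle, so it has no torsion subsheaf and in particular no $\sF[1]$-subobject, whence $V\in\sT$, while $\tau\subseteq A_\sT$ lies in $\sT$ because $\sT$ is closed under subsheaves. As all three terms lie in $\sT\subseteq\sA$, the associated triangle has all vertices in the heart $\sA$ and is thus a short exact sequence in $\sA$ as well. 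The torsion-free part $V$ then admits a filtration by line subbundles with line-bundle quotients, as every vector bundle on the smooth orbifold curve $X$ does; each subbundle and each subquotient is again torsion-free, hence in $\sT$, so the same triangle argument turns this into a filtration inside $\sA$ whose graded pieces are of type (i). Finally the torsion sheaf $\tau$ splits according to its support: over a point $q\in X\setminus\set{p_i}$ it is an iterated extension of the generic cluster $\sO_q$, of type (ii), while its part supported at an orbifold point $p_i$ lies in $\sT\subseteq\sA$ and is therefore covered by Lemma \ref{lem_classification-torsion-in-A}, giving type (iii). Concatenating these filtrations and combining with the $\sF$-part finishes the proof.

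The main obstacle I anticipate is purely one of bookkeeping of the ambient abelian category: the natural torsion/torsion-free and line-bundle filtrations of $A_\sT$ are produced in the standard heart $\sB$, whereas the statement asks for repeated extensions \emph{inside} $\sA$. The device that makes this harmless is the observation that a distinguished triangle whose three vertices all lie in a fixed heart is automatically a short exact sequence in that heart, used together with the two facts that $\sT$ is closed under subobjects in $\sB$ and that every torsion-free sheaf on $X$ belongs to $\sT$. One must also check that the successive filtrations actually terminate, which follows from the Noetherianity of $\sA$ established in Lemma \ref{lem_A_Noeth} together with the finite length of the torsion part of $A_\sT$.
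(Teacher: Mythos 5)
Your argument is essentially the one the paper intends: Proposition \ref{prop_classification-objects-A} is stated there with no separate proof, as an immediate consequence of the torsion pair $(\sT,\sF)$ on $\sA$ together with Lemma \ref{lem_classification-torsion-in-A}, and the decomposition $A_\sT \to A \to A_\sF$ followed by a torsion/torsion-free/line-bundle filtration of $A_\sT$ is exactly that route. Your bookkeeping observations (a triangle with all three vertices in a heart is a short exact sequence there; $\sT$ is closed under subobjects in $\sB$ as the torsion-free class of the pair $(\sF[1],\sT)$) are correct and are the right way to transport the filtrations back into $\sA$.

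The one step that is not quite right as written is the claim that $A_\sT$ is ``an honest coherent sheaf on $X$.'' The heart $\sB$ consists of sheaves on $\Tot(\omega_X)$ that are only set-theoretically supported on $X$, so $A_\sT$ is in general an $\sO_{X_n}$-module for some infinitesimal neighborhood $X_n$ of the zero section; in particular its pure one-dimensional part $V$ need not be the pushforward of a vector bundle on $X$, and the filtration by line subbundles does not apply to it directly. The repair is the same device used in the proof of Lemma \ref{lem_iota_iso_of_K_groups}: first filter $A_\sT$ by the powers of the ideal sheaf of $X$, so that the graded pieces are genuine $\sO_X$-modules, and only then split each graded piece into its torsion and locally free parts and filter the locally free part by line bundles. (The subobjects $I_X^k A_\sT$ remain in $\sT$ since $\sT$ is closed under subobjects, so this extra layer of filtration causes no new difficulty.) With that one sentence inserted, your proof is complete and matches the paper's intended argument.
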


\subsection{The fundamental region and normalization}\label{ssec_FundRegionU}

Recall the notation introduced in Section \ref{sec_ell_root_syst} and the identification $K(\sD)_\R\simeq F$. In this section, we use the heart $\sA_R$ to construct a region $U$ in $\Stab(\sD)$ which is a homeomorphic lift via $\pi\colon \Stab(\sD)\to \Hom(F,\C)$ of the fundamental domain $D$ described in Proposition \ref{prop_def_fundamental_domain}. Then, following \cite{Bri09_kleinian}, we introduce \emph{normalized} stability conditions.

\begin{proposition}\label{prop_FundamentalRegionU}
For every point $Z$ in the fundamental domain $D\subset \mathbb E$ there exists a unique stability condition $(Z,\mathcal{A}_R)\in \Stab(\sD)$. In fact, the inverse image $U\coloneqq \pi^{-1}(D)$ maps homeomorphically to $D$ under the central charge map.  \end{proposition}

\begin{proof}
Pick $Z\in D_\tau\subset D\subset \mathbb E$. The class of every object in $\mathcal{A}_R$ is a positive linear combination of classes of objects listed in Prop. \ref{prop_classification-objects-A}. Then, the definition of $D_\tau$ shows that $Z(\mathcal{A}_R)\subset \overline\HH$, in other words, $Z$ is a stability function on $\mathcal{A}_R$. Since $\mathcal{A}_R$ is Noetherian (Lemma \ref{lem_A_Noeth}), and the image of $\im Z$ is discrete by construction, then $Z$ has the Harder-Narasimhan property by Prop. \ref{prop_BriHNproperty}.

Again by Prop. \ref{prop_classification-objects-A}, we see that the image of $Z$ is discrete, so the support property is automatically satisfied.
Then, the map $\pi_{|U}$ is a homeomorphism.  
\end{proof}

We observe right away the following Lemma:

\begin{lemma}\label{lem_Pi_are stable in U}
Let $\sigma\in U$. Then, all $t_i^j$, $j\neq 0$, and all line bundles $\sO_{X}(d)$ are $\sigma$-stable.
\end{lemma}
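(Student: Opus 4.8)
The plan is to test stability directly against subobjects in $\sA$, but to do all of the bookkeeping after transporting the problem through the McKay equivalence $\Psi$ into $\sB'=\Coh(Y')\cap\sD'$, where $\sA=\Psi\sB'$ and every subobject of a \emph{sheaf} is an honest subsheaf on the surface $Y'$. Concretely, set $W\coloneqq Z\circ[\Psi]$ on $K(\sB')$; then $(\sB',W)$ is the stability function corresponding to $\sigma$, and it suffices to show that $\sO_{C_{i,j}}(-1)$ and $\sL_d\coloneqq\Psi^{-1}(\sO_X(d))$ are $W$-stable in $\sB'$. First I would record the central charges. Since $[t_i^j]=\alpha_{(i,j)}$ and, using $a=[\iota_*\sO]-[\iota_*\sO(1)]$ together with $[\sO_X(d)]=\alpha_0-d\,a$, one has $Z(t_i^j)=Z(\alpha_{(i,j)})$ and $Z(\sO_X(d))=Z(\alpha_0)-d$. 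The alcove inequalities $\im Z(\alpha_v)>0$ for $v\in\abs{\Gamma_a}$ (valid on $D_\tau\subset A_\tau$, and applying to $v_0$ and all $v_{(i,j)}$) give $\im Z(t_i^j)>0$ and $\im Z(\sO_X(d))=\im Z(\alpha_0)>0$, so all these objects have phase in $(0,1)$. Finally, the normalization $x(a)=1$ forces the point sheaves (clusters) to have class $-a$ and hence central charge on the negative real axis, i.e. phase $1$: this is the observation that makes the argument run, since it means that peeling off skyscraper/cluster factors strictly \emph{lowers} the phase.

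For the objects $t_i^j$ the computation is clean. Here $t_i^j=\Psi(\sO_{C_{i,j}}(-1))$, a sheaf supported on a single rational curve $C_{i,j}\cong\pr1$, so its subobjects in $\sA$ correspond to subsheaves of $\sO_{C_{i,j}}(-1)$ in $\sB'$. As $\sO_{C_{i,j}}(-1)$ is torsion free on its support these are exactly the $\sO_{C_{i,j}}(-1-m)$ with $m\geq 1$, whose cokernels are length-$m$ sums of skyscrapers. Passing through $[\Psi]$ gives subobject class $\alpha_{(i,j)}+m\,a$, hence $Z=Z(\alpha_{(i,j)})+m$: the imaginary part is unchanged while the real part increases, so $\phi$(subobject)$<\phi(t_i^j)$. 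Therefore $t_i^j$ has no destabilizing subobject and is $\sigma$-stable.

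For the line bundles $\sO_X(d)$ I would again pass to $\sB'$ and identify $\sL_d$ as a line bundle on $X'$ lying in $\sT'$ (for $d=0$, $\sL_0=\sO_{X'}$). A subsheaf of $\sL_d$ on $Y'$ is, up to the torsion-pair analysis of Lemma~\ref{lem_classification-torsion-in-A} and the structure of the maximal subsheaves $L_D\subset\sO_{C_i}(C_i)$ in Lemma~\ref{lem_description of LD}, an extension of a rank-one torsion-free subsheaf on the main component $X$ (which only lowers the degree, i.e. adds point classes $+a$ to the complementary quotient) by sheaves supported on the exceptional chains. In terms of classes, every proper subobject has class $[\sO_X(d)]$ minus a nonnegative combination of the cluster class $-a$ and the simple-root classes $\alpha_{(i,j)}$. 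The degree-lowering contributions increase the real part and thus strictly decrease the phase exactly as in the $t_i^j$ case; the genuinely new possibility is a subsheaf whose class involves the $\alpha_{(i,j)}$ — for instance the honest inclusion $\sO_{C_{i,j}}(-1)\hookrightarrow\sO_{X'}$, which realizes $t_i^j$ as a subobject of $\sO_X$. For these one must show $\phi(\alpha_{(i,j)})<\phi(\sO_X(d))$, and more generally that every such combination has strictly smaller argument than $Z(\sO_X(d))$.

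The main obstacle is precisely this last point: controlling subobjects of $\sO_X(d)$ supported partly on the exceptional curves and verifying the phase inequality \emph{uniformly over all of $D_\tau$}, rather than in $\Coh(X)$. I expect the resolution to come entirely from the defining inequalities of the fundamental domain: writing $Z=\re Z+\tau\,\im Z$ under $\mathbb E_\tau\simeq V\oplus\tau V$, the alcove condition $\im Z\in A$ together with the hypercube constraint $\re Z\in B'$ cutting out $B_\tau$ should pin down the cyclic ordering of the phases of the simple-root objects dictated by the affine diagram $\Gamma_a$, forcing $\phi(\alpha_{(i,j)})<\phi(\alpha_0)$ and the analogous comparisons for every admissible combination. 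Once these finitely many phase inequalities are established on $D_\tau$, stability of each $\sO_X(d)$ follows, completing the proof.
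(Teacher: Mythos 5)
Your treatment of the $t_i^j$ is correct and is, up to duality, the paper's own argument: the paper phrases it via quotients rather than subobjects, observing that every proper quotient $Q$ of $\Psi^{-1}S$ in $\sB'$ is an iterated extension of skyscraper sheaves, so that $Z(Q)$ lies on the negative real axis and $Q$ is semistable of phase $1>\phi(S)$; your version (subobject of class $\alpha_{(i,j)}+ma$, same imaginary part, strictly larger real part) is the same computation seen from the kernel. The crucial point is that the paper applies this \emph{uniformly} to both kinds of objects: it asserts that $\Psi^{-1}S$ is either $\sO_{C_{i,j}}(-1)$ or a line bundle on the strict transform of $X$, and that in either case the only proper quotients are finite-length, so no comparison between the phases of distinct roots is ever needed.

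This is exactly where your proposal has a genuine gap. For $\sO_X(d)$ you allow subobjects whose class involves the exceptional simple roots $\alpha_{(i,j)}$, reduce stability to the inequalities $\phi(\alpha_{(i,j)})<\phi(\sO_X(d))$ (and their iterates), and then only ``expect'' these to follow from the definition of $D_\tau$. They do not: the alcove condition defining $A_\tau$ constrains only the imaginary parts $\im Z(\alpha_v)>0$, and the hypercube $B_\tau$ only confines the real parts to a box containing the origin; nothing in the definition of $D_\tau$ orders the arguments of the $Z(\alpha_v)$. Inside $D_\tau$ one may take $\im Z(\alpha_{(i,j)})$ arbitrarily small with $\re Z(\alpha_{(i,j)})\le 0$, so $\phi(\alpha_{(i,j)})$ approaches $1$ while $\phi(\alpha_0)$ stays small; the inequalities you need are false as uniform statements on $D_\tau$. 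The lemma is saved not by phase estimates but by the structural claim the paper makes (and that you would have to prove to complete your route): every proper subobject of $\Psi^{-1}(\sO_X(d))$ in $\sB'$ has finite-length cokernel, i.e.\ has class $[\sO_X(d)]+ma$ with $m>0$, so the putative destabilizers supported on the exceptional chains never occur as subobjects with non-torsion quotient. If you want to keep the subobject formulation, that sheaf-theoretic statement on $Y'$ is the missing step; the phase inequalities are a dead end.
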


\begin{proof}
Let $S$ be one of the objects in $
Pi$ (see 
\eqref{eq_def_of_Pi}) or a sheaf $\sO_X(d)$. A short exact sequence 
\begin{equation}\label{eq_ses_Si_stable}
   K\to S\to Q 
\end{equation}
in $\mathcal{A}_R$ corresponds under the McKay functor to a short exact sequence of sheaves on the resolution 
$$ K'\to \Psi^{-1} (S) \to Q'. $$
On the other hand, $\Psi^{-1} S $ is either an object of the form $\sO_{C_{i,j}}(-1)$ or a line bundle on $X$. In either case, the only quotients of $\Psi^{-1} (S)$ are obtained by repeated extensions of skyscraper sheaves, so $Q\in \mathcal{A}_R$ is semistable of phase 1. Therefore $S$ is $\sigma$-stable.
\end{proof}

Let $\Stab^\dagger(\sD)$ be the connected component of $\Stab(\sD)$ containing $U$. In addition to the full stability manifold $\Stab(\sD)$, we will often restrict our attention to the locus of \textit{normalized} stability conditions 
\begin{equation}
\Stab_n(\sD)\coloneqq\{ \sigma=(Z,\PP) \in \Stab^\dagger(\sD) \st Z(a)=1 \}.
\label{eq_def_StabN}
\end{equation}

By construction, $U\subset \Stab_n(\sD)$, so we also define $\Stab_n^\dagger(\sD)\subset \Stab^\dagger(\sD)$ as the connected component of $\Stab_n(\sD)$ containing $U$. We use $\pi$ to denote the restriction of the central charge map to any of these regions of $\Stab(\sD)$. 
As it turns out, we have 

\begin{proposition}\label{prop_circ=dagger}
All stability conditions in $\Stab^\dagger(\sD)$ (and hence in $\Stab^\dagger_n(\sD)$) satisfy the additional condition 
\[ (*)\colon \quad  \im \frac{Z(b)}{Z(a)}>0.\]
\end{proposition}

The proof of Proposition \ref{prop_circ=dagger} uses our wall-crossing result (Theorem \ref{thm_enough_spherical_objects}) and is given in Section \ref{sec_proof_dagger=circ}.
An immediate consequence of Proposition \ref{prop_circ=dagger} is that $\pi$ maps $\Stab_n^\dagger(\sD)$ in $\mathbb{E}\subset \Hom(K(\sD),\C)$. This is used in Section \ref{sec_stabCondOnD}.

\begin{remark}\label{rmk_normalization}
Normalization is a very natural choice in this context: it already appears in the case of Kleinian singularities \cite{Bri09_kleinian} and it fits well with Saito's definitions of $\mathbb{E}$ and $\mathbb{H}$ (see \eqref{eq_def_EandH}), which include the condition $Z(a)=1$.

Moreover, as is the case in \cite{Tod08_crepant} and, for example, in \cite{IW19}, normalizing preserves information about the whole component $\Stab^\dagger(\sD)$. Indeed, $\Stab^\dagger(\sD)$ is the orbit of $\Stab^\dagger_n(\sD)$ under the $\C$-action, and it is a $\C^*$-bundle over the normalized locus $\Stab_n(\sD)$: these statements are proven in Section \ref{ssec:C_Orbit} using results from Section \ref{sec_WallCrossing}.  
\end{remark}


\section{Wall-crossing in \texorpdfstring{$\sD$}{mathcal{D}}}\label{sec_WallCrossing}

In this section, we apply the wall-crossing methods of \cite{BM14_MMP} and \cite{BM_local_p2} to the $K3$-category $\sD$. First, we produce stable objects for a certain stability condition in $\Stab^\dagger(\sD)$. We then analyze wall crossing for spherical and radical classes, obtaining Theorem \ref{thm_enough_spherical_objects}. From it, we obtain a proof of Proposition \ref{prop_circ=dagger} and of the claims of Remark \ref{rmk_normalization}. The results of this section hold if one works with normalized stability conditions with the same arguments, so we do not repeat them. The notation is as above.

\subsection{Stability conditions on \texorpdfstring{$\Coh(X)$}{Coh(X)} and \texorpdfstring{$\sB$}{mathcal B}}

Geigle and Lenzing define slope-stability on a weighted projective line in \cite[Sec. 5]{GL87}. Define a stability condition $\tau_0'\coloneqq(Z_0,\Coh(X))\in\Stab(X)$ with 
$$ Z_0=-\deg + i\rk, $$
where $\deg(\sO_{p_i}\otimes\chi^j)$ is defined to be $\frac{1}{a_i}$ for all orbifold points $p_i$ and all $j=0,...,a_i-1$. Then, slope-stability is equivalent to $\tau_0'$-stability on $X$.  We say that a root $\alpha\in R\cup \Delta_{im}$ is \emph{positive} if $Z_0(\alpha)\in\mathbb H\cup \R_{<0}$. Results about $\tau_0'$-stability are summarized in \cite{LM93}:

\begin{thm}[{\cite[Theor. 4.6]{LM93}}]\label{thm_CB_indecomposables}
Let $X$ be as above, $\alpha\in R\cup \Delta_{im}$. Then:
\begin{enumerate}[(i)]
    \item there exists an indecomposable sheaf $F$ of class $\alpha$ if and only if $\alpha$ is a positive root;
    \item the sheaf $F$ is unique up to isomorphism if $\alpha$ is a real root, and varies in a one-parameter family if $\alpha$ is imaginary;
    \item an indecomposable sheaf is $\tau_0'$-semistable, and it is $\tau_0'$-stable if and only if $\alpha$ is primitive. 
\end{enumerate}
\end{thm}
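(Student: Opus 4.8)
The plan is to read the statement off from the classification of indecomposable sheaves on a weighted projective line of tubular type, which is the sheaf-theoretic form of Kac's theorem for the elliptic root lattice identified in Section~\ref{sec_ell_root_syst}. Three ingredients enter: the hereditary structure of $\Coh(X)$ with its Serre duality, the slope (tubular) decomposition attached to $Z_0$, and the transitivity of a group of autoequivalences on the set of slopes.

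First I would record the linear-algebra dictionary. Since $X$ is of tubular type, $\Coh(X)$ is hereditary with Serre duality and dualizing sheaf of degree zero, and $K(X)$ with the symmetrized Euler form is precisely the elliptic root lattice: the real roots $R$ are the classes with $\chi_X(\alpha,\alpha)=1$ (equivalently $I(\alpha,\alpha)=2$), while $\Delta_{im}$ lies in $\rad\chi_X$. A rigid object $E$ then has $\chi_X(E,E)=1$, which is the structural reason a real root carries a \emph{unique} rigid indecomposable, whereas an imaginary root, satisfying $\chi_X(\alpha,\alpha)=0$, forces a positive-dimensional family. This gives (ii) and the existence half of (i), and I would quote it in the form due to Crawley--Boevey and Lenzing--Meltzer (the genus-one analogue of Kac's theorem); the positivity convention $Z_0(\alpha)\in\mathbb H\cup\R_{<0}$ matches root-theoretic positivity (effectivity) by inspection of $Z_0=-\deg+i\rk$.

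Next I would set up the structural backbone, which already yields the semistability assertion in (iii). Writing $\mu$ for the slope function determined by $Z_0$, one has a decomposition $\Coh(X)=\bigcup_{q}\sT_q$ indexed by $q\in\Q\cup\{\infty\}$, where $\sT_q$ is the abelian subcategory of $\tau_0'$-semistable sheaves of slope $q$, $\Hom(\sT_q,\sT_{q'})=0$ for $q>q'$, and $\sT_\infty=\Coh_0(X)$ is the category of torsion sheaves, a coproduct of tubes. Any indecomposable $F$ must lie in a single $\sT_q$ (otherwise its Harder--Narasimhan filtration would split off a proper direct summand), hence is $\tau_0'$-semistable. To control the class and the stable/family dichotomy inside an arbitrary $\sT_q$, I would transport $F$ to $\sT_\infty$ by one of the tubular self-equivalences of the derived category, which realize an $\SL(2,\Z)$-action on the slopes $\mathbb P^1(\Q)$ by M\"obius transformations and act on $K(X)$ by isometries of $\chi_X$; this reduces the enumeration to that of indecomposable torsion sheaves via uniserial tube modules.

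The main obstacle is the stability clause of (iii): semistability is soft, but identifying the $\tau_0'$-stable indecomposables with those of primitive class requires the fine uniserial structure of the exceptional and homogeneous tubes together with the precise slope combinatorics, and is the genuine content of \cite[Theor.~4.6]{LM93}, mirroring Atiyah's classification of bundles on an elliptic curve. For this last step I would invoke that theorem rather than attempt a soft argument, reserving the remaining care for checking that the chosen autoequivalence preserves indecomposability, rigidity, the root type and primitivity, so that every conclusion transported back from $\sT_\infty$ holds verbatim for the original class $\alpha$.
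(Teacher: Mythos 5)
The paper offers no proof of this statement at all: it is imported verbatim as \cite[Theor.\ 4.6]{LM93}, so there is nothing internal to compare your argument against. Your sketch is a faithful outline of how that result is actually established in the weighted-projective-line literature --- the identification of $K(X)$ with the elliptic root lattice, the Kac-type dichotomy between rigid indecomposables on real roots and one-parameter families on imaginary roots, the tubular slope decomposition $\Coh(X)=\bigcup_q\sT_q$, and the transport between slopes by the $\SL(2,\Z)$-family of autoequivalences --- and you correctly concede that the stable-iff-primitive clause is the genuine content of the cited theorem rather than something recoverable by soft arguments. So your route is compatible with (indeed, an expansion of) what the paper does, which is simply to quote the result.

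One micro-justification in your sketch is wrong as stated and worth fixing: you claim an indecomposable must lie in a single $\sT_q$ because ``otherwise its Harder--Narasimhan filtration would split off a proper direct summand.'' Harder--Narasimhan filtrations do not split in general, and for weighted projective lines of non-tubular (wild or domestic) type indecomposable bundles need not be semistable. That every indecomposable is semistable is itself a theorem special to the tubular case (\cite[Thm.\ 5.6]{GL87}, resting on positive semidefiniteness of the Euler form), and should be cited as such rather than derived from a splitting that does not occur. With that substitution your sketch is a correct account of the standard proof.
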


By Lemma \ref{lem_iota_iso_of_K_groups}, we can regard $Z_0$ as a map defined on $K(\sD)$, and define a stability condition $\tau_0\in\Stab(\sD)$ as $(Z_0,\sB)$. By construction, $\tau_0$ lies in the boundary of a fundamental chamber in $\Stab^\dagger(\sD)$ (for example because $\im Z_0(t_i^j)=0$ for all $i,j$). 

We say that an object $E\in\sD$ is \emph{semi-rigid} if $\ext^1(E,E)=2$. Then we have:

\begin{proposition}\label{prop_CB_on_local_setting}
Let $\alpha\in R\cup \Delta_{im}$ be a positive root. If $\alpha$ is a real root, there exist a $\tau_0$-semistable spherical sheaf in $\sB$ of class $\alpha$. If $\alpha$ is imaginary, there is a one-parameter family of semi-rigid $\tau_0$-semistable sheaves in $\sB$ of class $\alpha$. If $\alpha$ is primitive, the same statement holds with stability instead of semistability. 
\end{proposition}

\begin{proof}
By Theorem \ref{thm_CB_indecomposables}, there exists a $\tau_0'$-semistable sheaf $E'$ on $X$ of class $\alpha$. Let $E\coloneqq \iota_*(E')$ be the indecomposable sheaf in $\sB$ obtained by pushing forward $E'$. The sheaf $E$ is $\tau_0$-semistable: since $E$ is supported on $X$ then so must be every subsheaf $S\subset E$. This implies that $S=\iota_*S'$ for some $S'\in\Coh(X)$. Then, $S$ destabilizes $E$ if and only if $S'$ destabilizes $E'$. 

Next, we show that $E$ is spherical if $\alpha$ is a real root. Deformations of $E'$ are governed by the group $\Ext^1_X(E',E')$, so  Theorem \ref{thm_CB_indecomposables} implies that $\Ext^1_{X}(E',E')=0$, hence $\Ext^1_\sB(E,E)=0$ by Lemma \ref{lem_sD_CY-category}. On the other hand, since $\alpha$ is real one must have $\chi(\alpha,\alpha)=2$, so $E$ is spherical. Similarly, one argues that $E$ is semi-rigid if $\alpha$ is imaginary. The claim about stability follows again from Theorem \ref{thm_CB_indecomposables}.
\end{proof}

\subsection{Wall-crossing in \texorpdfstring{$\Stab(\sD)$}{Stab(D)}}

The lattice $K(\sD)$ can be equipped with the Mukai pairing  
$$ (\bfv,\bfw )\coloneqq -\chi(\bfv,\bfw). $$
The pairing has a rank 2 radical $\rad \chi$ generated by $a$ and $b$, and it induces a negative definite pairing on $K(\sD)/\rad\chi$, since the Euler form on $K(\sD)/\rad\chi$ coincides with the Cartan matrix of the root system $R_f$, which is positive definite. 

Since $K(\sD)$ is negative semidefinite, the class $\mathbf{v}$ of a stable object can only satisfy $\mathbf{v}^2=0$ or $\mathbf{v}^2=-2$. In the first case, $\mathbf{v}$ belongs to $\rad\chi$, and we call it a \textit{radical class}. Classes with $\mathbf{v}^2=-2$ are called \textit{spherical classes}.

First, notice that since $K(\sD)$ is a discrete lattice, we have a finiteness result for walls:

\begin{proposition}[{\cite[Prop. 3.3]{BM_local_p2}}]\label{prop_BM_finiteness_of_walls}
Let $\sD$ be a triangulated category such that $K(\sD)$ is a lattice of finite rank. Let $\Stab^*(\sD)\subset \Stab(\sD)$ be a connected component of its space of stability conditions. Fix a primitive class $\bfv\in K(\sD)$, and an arbitrary set $S \subset D$ of objects of class $\bfv$. Then there exists a collection of walls $W^S_\bfw$ with $\bfw\in K(\sD)$, with the following properties: 
\begin{enumerate}[(a)]
    \item Every wall $W^S_\bfw$ is a closed submanifold with boundary of real codimension one;
    \item The collection $W^S_\bfw$ is locally finite (i.e., every compact subset $K\subset \Stab^*(\sD)$ intersects only a finite number of walls);
    \item For every stability condition $(Z,\PP) \in W^S_\bfw$, there exists a phase $\phi$ and an inclusion $F_\bfw\to E_\bfv$ in $\PP(\phi)$ with $[F_\bfw] = \bfw$ and some $E_\bfv\in S$;
    \item If $\sC\subset \Stab^*(\sD)$ is a connected component of the complement of $\cup_{\bfw\in K(\sD)} W^S_\bfw$, and $\sigma_1,\sigma_2\in \sC$, then an object $E_\bfv\in S$ is $\sigma_1$-stable
if and only if it is $\sigma_2$-stable.
\end{enumerate}
\end{proposition}

Recall that $\sigma\in \Stab(\sD)$ is said to be \emph{generic} with respect to $\bfv\in K(\sD)$ if $\sigma$ does not lie on any of the walls of the wall-and-chamber decomposition associated to $\bfv$. The goal of this section is to prove the following Theorem:

\begin{thm}\label{thm_enough_spherical_objects}
Let $\alpha\in R\subset K(\sD)$ be a positive root. Let $\sigma\in\Stab^\dagger(\sD)$ be generic with respect to $\alpha$. Then, there exists a $\sigma$-stable object $E$ of class $\alpha$. The object $E$ is rigid if $\alpha$ is a real root, and it varies in a family if $\alpha$ is imaginary.
\end{thm}

We will make use of the following well-known property of K3-categories.

\begin{lemma}[{\cite[Prop. 2.9]{HMS08}}]\label{lem_spherical_has_spherical_JH}
Let $\sigma\in\Stab(\sD)$.
\begin{enumerate}[(i)]
    \item If $E\in\sD$ is spherical, then all of its $\sigma$-stable factors are spherical;
    \item  if $E\in\sD$ is semi-rigid, then all of its $\sigma$-stable factors are spherical, except for possibly one semi-rigid factor.
\end{enumerate}
\end{lemma}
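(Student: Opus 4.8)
The plan is to separate a qualitative part, which follows from the lattice theory already set up, from the quantitative part (counting the semi-rigid factors), which needs genuine homological input. First I would record that every $\sigma$-stable factor $A$ of $E$ is itself either spherical or semi-rigid. Indeed, $A$ is simple in the abelian category $\PP(\phi(A))$, so $\Hom(A,A)=\C$; since $[2]$ is a Serre functor (the definition of K3-category, together with Lemma~\ref{lem_sD_CY-category}) we get $\Ext^2(A,A)\cong\Hom(A,A)^*=\C$, while $\Hom(A,A[k])=0$ for $k<0$ by the slicing axiom and for $k>2$ by Serre duality. Hence $\chi(A,A)=2-\ext^1(A,A)$. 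As recalled just before the statement, the class of a stable object satisfies $[A]^2\in\{0,-2\}$, i.e. $\chi(A,A)=-[A]^2\in\{0,2\}$, so $\ext^1(A,A)\in\{0,2\}$: the factor $A$ is spherical when $[A]^2=-2$ and semi-rigid when $[A]^2=0$. This already yields the qualitative content of both (i) and (ii), and it only remains to bound the number of semi-rigid factors.

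Next I would isolate the semi-rigid factors at the level of the Euler form. List the distinct stable factors of $E$ as $S_1,\dots,S_r$ with multiplicities $m_1,\dots,m_r$, so that $[E]=\sum_p m_p[S_p]$, and set $C_{pq}\coloneqq\chi(S_p,S_q)$. The matrix $C$ is symmetric (Lemma~\ref{lem_sD_CY-category}) and positive semidefinite, being minus the Gram matrix of the classes $[S_p]$ for the negative semidefinite Mukai pairing, and its diagonal entries lie in $\{0,2\}$ by the previous step. A positive semidefinite matrix with a vanishing diagonal entry has the entire corresponding row and column equal to zero; therefore each semi-rigid factor $S_p$ (those with $C_{pp}=0$, equivalently $[S_p]\in\rad\chi$) satisfies $\chi(S_p,S_q)=0$ for every $q$. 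In other words, semi-rigid factors are $\chi$-orthogonal to all stable factors of $E$.

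To count them I would induct on the number $N$ of stable factors (with multiplicity), the base case $N=1$ being the dichotomy above. For the inductive step, peel off a stable factor $A$ of extremal phase to obtain a triangle $A\to E\to B$, where $B$ has $N-1$ stable factors; extremality of $\phi(A)$ forces $\Hom(A,B)=\Hom(B,A)=0$ and the vanishing of the neighbouring cross-groups, and applying $\RHom(A,-)$, $\RHom(B,-)$ and $\RHom(-,E)$ to the triangle expresses $\ext^1(E,E)$ through $\ext^1(A,A)$, $\ext^1(B,B)$ and cross-$\Ext$ terms. The target is the inequality $\ext^1(E,E)\ge 2\,n_{\mathrm{sr}}(E)$, where $n_{\mathrm{sr}}(E)$ is the number of semi-rigid factors: granting it, $\ext^1(E,E)=0$ (spherical $E$) forces $n_{\mathrm{sr}}=0$, proving (i), and $\ext^1(E,E)=2$ (semi-rigid $E$) forces $n_{\mathrm{sr}}\le 1$, proving (ii).

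The crux, and the main obstacle, is precisely this inequality, and the difficulty is conceptual rather than computational: the Euler form is \emph{blind} to semi-rigid factors, since their classes lie in $\rad\chi$ and contribute $0$ to $\chi(E,E)=\sum_{p,q}m_pm_qC_{pq}$. Thus the count cannot be read off $K(\sD)$ and must come from the actual homology. What makes it work is the Calabi--Yau-$2$ structure of a semi-rigid object: for semi-rigid stable $S$ one has $\Ext^\bullet(S,S)\cong\wedge^\bullet\C^2$, with the antisymmetric nondegenerate pairing $\Ext^1\times\Ext^1\to\Ext^2=\C$ coming from Serre duality, whose Koszul dual is the \emph{commutative} ring $\C[x,y]$. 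Consequently a nonsplit iterated self-extension of $S$ has endomorphism dimension $\ge 2$, hence self-$\Ext^1\ge 4$, so a semi-rigid factor cannot occur with multiplicity $>1$, nor alongside a second semi-rigid factor, without pushing $\ext^1(E,E)$ above $2$. Checking that the cross-$\Ext$ terms in the inductive step never cancel this surviving $\C^2$ — using the $\chi$-orthogonality of the radical classes established above — is the technical heart of the proof.
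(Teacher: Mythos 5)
Your opening step is sound and in fact matches the remark the paper makes just before the statement: a $\sigma$-stable object $A$ is simple in $\PP(\phi(A))$, so $\hom(A,A)=1$; Serre duality in the K3-category gives $\ext^2(A,A)=1$ and kills negative degrees, hence $\chi(A,A)=2-\ext^1(A,A)$; and since the Mukai pairing is negative semidefinite and even, $\ext^1(A,A)\in\{0,2\}$. The positive-semidefinite row--column observation is also correct as linear algebra, and it does have one real consequence ($\chi(S,T)=0$ forces $\ext^1(S,T)=\hom(S,T)+\hom(T,S)$, which vanishes for distinct stable objects of equal phase). But $\chi$-orthogonality only constrains an alternating sum, so once the factors live in different phases it says nothing about the individual cross-$\Ext$ groups, and it cannot power the induction you set up.

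The genuine gap is in your inductive step, and it is not merely "technical": the vanishing you assert --- that extremality of $\phi(A)$ forces $\Hom(A,B)=\Hom(B,A)=0$ --- is false precisely in the critical case. If the extremal phase occurs with multiplicity (e.g.\ $E$ a nonsplit self-extension of a semi-rigid stable $S$, exactly the situation your Koszul-duality remark concerns), then every stable subobject $A\simeq S$ of that phase has $\Hom(A,E/A)\simeq\Hom(S,S)=\C\neq 0$; and even in the Harder--Narasimhan case only $\Hom(A,B)=0$ holds, since maps from lower to higher phase, i.e.\ $\Hom(B,A)$, need not vanish. So the long exact sequences do not "express" $\ext^1(E,E)$ through the pieces, and your pivotal inequality $\ext^1(E,E)\geq 2\,n_{\mathrm{sr}}(E)$ is left unproven --- as you yourself flag. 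The missing idea is Mukai's lemma, which is what the paper's source uses: the paper gives no proof at all, only the citation to \cite[Prop.~2.9]{HMS08}, whose proof rests on the Mukai-type inequality (\cite[Lemma~2.7]{HMS08}) that for a triangle $A\to E\to B$ with $A,B$ in a heart and \emph{only} $\Hom(A,B)=0$ assumed, one has $\ext^1(A,A)+\ext^1(B,B)\leq\ext^1(E,E)$. With this in hand no cancellation analysis of cross-terms is needed: one chooses the filtration so that the sub-to-quotient $\Hom$ vanishes (the top HN piece; within a semistable block, a suitably chosen Jordan--H\"older subobject), inducts on the number of factors, and concludes from your dichotomy that $\ext^1(E,E)=0$ excludes semi-rigid factors entirely while $\ext^1(E,E)=2$ allows at most one. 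To salvage your outline, replace the cross-term bookkeeping by a proof of Mukai's lemma (a Serre-duality diagram chase); the asymmetry of its hypothesis --- one $\Hom$-vanishing, not two --- is exactly what makes the repeated-factor case go through.
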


Before moving forward, we recall a construction from \cite{BM14_MMP}.
Fix a primitive class $\bfv\in K(\sD)$, let $S$ be the set of objects of $\sD$ of class $\bfv$, and let $W=W^S_\bfw$ be a wall of the wall-and-chamber decomposition of $\Stab(\sD)$ associated to $\bfv$. Then we can associate to $W$ the rank 2 lattice $H_W\subset K(\sD)$:

\begin{equation}\label{eq_def_of_H_W}    H_W = \left\lbrace \bfw\in K(\sD) \mid \im \frac{Z(\bfv)}{Z(\bfw)}=0 \mbox{ for all } \sigma=(Z,\PP)\in W\right\rbrace.
\end{equation}

The rank of $H_W$ is at least 2 because it contains at least $\bfv$ and the linearly independent class $\bfw$ destabilizing at $W$. If it had rank bigger than 2, the definition \eqref{eq_def_of_H_W} would imply that $W$ has codimension higher than 1. 

For any $\sigma=(Z,\PP)\in W$, let $C_\sigma\subset H_W\otimes \R$ be the cone spanned by classes $\mathbf c$ satisfying
$$ \mathbf c^2\geq -2 \quad\mbox{and}\quad \im\frac{Z(\mathbf c)}{Z(\bfv)}>0. $$
We will refer to $C_\sigma$ as to the \emph{cone of $\sigma$-effective classes} in $H_W$.

\subsubsection{Wall-crossing for spherical classes}


\begin{lemma}\label{lem_lattice_H_W}
Let $\bfv$ be a primitive spherical class in $K(\sD)$, and $W$ be a wall for $\bfv$. Then $H_W$ is a primitive lattice of rank two generated by $\bfv$ and a spherical class $\bfw$. It is negative definite (with respect to the restriction of the Mukai pairing). Moreover, there are only three possibilities for the intersection form, and:
\begin{enumerate}[(i)]
    \item  if $(\bfv,\bfw)=0$, then $H_W$ contains no spherical classes except for $\pm\bfv$ and $\pm\bfw$;
    \item if $(\bfv,\bfw)=-1$, the only spherical classes in $H_W$ are $\pm\bfv$, $\pm\bfw$, and $\pm(\bfv-\bfw)$;
    \item if $(\bfv,\bfw)=1$, the only spherical classes in $H_W$ are $\pm\bfv$, $\pm\bfw$, and $\pm(\bfv+\bfw)$.
\end{enumerate}
\end{lemma}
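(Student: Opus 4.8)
The plan is to reduce the statement to a finite lattice computation, once $H_W$ is known to be a primitive, negative definite rank two lattice admitting a basis of two spherical classes. First I would record that $H_W$ has rank exactly two (the argument is already given immediately after \eqref{eq_def_of_H_W}) and that it is primitive: if $n\bfw\in H_W$ then $\im\frac{Z(\bfv)}{Z(\bfw)}=\tfrac1n\im\frac{Z(\bfv)}{Z(n\bfw)}=0$ for every $\sigma=(Z,\PP)\in W$, so $\bfw\in H_W$ and $K(\sD)/H_W$ is torsion free. Since the Mukai form on $K(\sD)$ is negative semidefinite with radical $\rad\chi$ (as recorded above), a vector $\mathbf{x}$ satisfies $\mathbf{x}^2=0$ if and only if $\mathbf{x}\in\rad\chi$; hence the restriction of the form to $H_W$ is negative definite precisely when $H_W\cap\rad\chi=0$, and the whole negative-definiteness question becomes the absence of a radical class in $H_W$.

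Next I would produce the spherical classes from the wall itself. Fix $\sigma=(Z,\PP)\in W$ and an object $E_\bfv$ of class $\bfv$ which is $\sigma$-semistable but strictly so on $W$ (Prop.~\ref{prop_BM_finiteness_of_walls}). Because $\bfv$ is spherical, Lemma~\ref{lem_spherical_has_spherical_JH}(i) forces every $\sigma$-stable Jordan--Hölder factor $A_i$ of $E_\bfv$ to be spherical, so each class $\mathbf{s}_i=[A_i]$ satisfies $\mathbf{s}_i^2=-2$ and lies in $H_W$ (all factors share the phase of $\bfv$). There are at least two distinct such classes: a single factor would give $\bfv=n\mathbf{s}_1$ with $-2=n^2\mathbf{s}_1^2$, hence $n=1$ and $E_\bfv$ stable, contradicting the choice of $W$. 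Distinct stable factors of equal phase are non-proportional, so the $\mathbf{s}_i$ span $H_W\otimes\R$. I would then take $\bfw$ to be a stable factor such that $\{\bfv,\bfw\}$ is a $\Z$-basis of $H_W$, using the primitivity of $H_W$ together with the fact that the extremal rays of the cone $C_\sigma$ of $\sigma$-effective classes are spanned by spherical classes.

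The pairing of two distinct stable factors $A_i\neq A_j$ of equal phase is controlled by $\Hom(A_i,A_j)=0$ and, by the K3 property (Lemma~\ref{lem_sD_CY-category}), $\Hom(A_i,A_j[2])\cong\Hom(A_j,A_i)^*=0$, so $(\mathbf{s}_i,\mathbf{s}_j)=-\chi(A_i,A_j)=\ext^1(A_i,A_j)\geq 0$; negative semidefiniteness then gives $(\mathbf{s}_i,\mathbf{s}_j)\leq 2$. The \emph{main obstacle} is excluding the value $2$, equivalently proving $H_W\cap\rad\chi=0$: if $(\mathbf{s}_1,\mathbf{s}_2)=2$ then $\mathbf{m}=\mathbf{s}_1+\mathbf{s}_2$ is isotropic, hence radical, and $H_W$ degenerates. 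I would rule this out using that $W$ lies in the open region $\Stab^\circ(\sD)$. Such an $\mathbf{m}\in H_W\cap\rad\chi$ is $\sigma$-semistable of the wall phase for every $\sigma\in W$, so $Z(\mathbf{m})\in\R\,Z(\bfv)$ throughout $W$; since $\mathbf{m}\in\rad\chi=\langle a,b\rangle$ while condition $(\ast)$ keeps $\im\frac{Z(b)}{Z(a)}>0$ strictly on all of $\Stab^\circ(\sD)$, I would show that this alignment of the radical direction $Z(\mathbf{m})$ with the spherical direction $Z(\bfv)$ can persist across the codimension one wall only in the limit $\im\frac{Z(b)}{Z(a)}\to 0$, i.e.\ on $\partial\Stab^\dagger(\sD)$, contradicting $W\subset\Stab^\circ(\sD)$. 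This yields $(\mathbf{s}_i,\mathbf{s}_j)\leq 1$ and hence negative definiteness.

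Finally, with $\{\bfv,\bfw\}$ a $\Z$-basis of the negative definite lattice $H_W$, $\bfv^2=\bfw^2=-2$ and $d\coloneqq(\bfv,\bfw)$, negative definiteness is equivalent to $d^2<4$, i.e.\ $d\in\{-1,0,1\}$, giving the three intersection forms. The spherical classes are enumerated by solving $(p\bfv+q\bfw)^2=-2$, that is $p^2+q^2-dpq=1$ over $\Z$: for $d=0$ the solutions are $(\pm1,0),(0,\pm1)$; for $d=-1$ one also gets $(1,-1),(-1,1)$; and for $d=1$ one also gets $(1,1),(-1,-1)$. Reading these off reproduces exactly the lists in (i)--(iii), and this last step is a routine finite check.
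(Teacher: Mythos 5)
Your overall strategy coincides with the paper's: $\bfw$ is forced to be spherical because the stable factors of a spherical object are spherical (Lemma \ref{lem_spherical_has_spherical_JH}), the pairing of distinct stable factors of equal phase is controlled via the K3 property, negative definiteness pins $(\bfv,\bfw)$ to $\{0,\pm1\}$ through the sign of the determinant, and the lists in (i)--(iii) come from solving $(x\bfv+y\bfw)^2=-2$ over $\Z$. Those parts of your write-up are correct and match the paper, which gets negative definiteness by computing the intersection matrix in the quotient $K(\sD)/\rad\chi$, where the Mukai pairing is the negative of the positive definite Cartan form of $R_f$ (this amounts to asserting that the images of $\bfv$ and $\bfw$ there are linearly independent).

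The genuine gap is in the step you yourself flag as the main obstacle: excluding $(\mathbf{s}_1,\mathbf{s}_2)=2$, i.e.\ excluding a radical class $\mathbf{m}=\mathbf{s}_1+\mathbf{s}_2$ in $H_W$. The contradiction you propose does not exist. Membership $\mathbf{m}\in H_W$ only says that $\im\bigl(Z(\mathbf{m})/Z(\bfv)\bigr)=0$ for every $\sigma\in W$; that is a single real condition, and $W$ is by definition a real codimension-one locus, so there is no tension in it holding exactly on $W$ and nowhere nearby. Condition $(\ast)$ constrains the ratio $Z(b)/Z(a)$ alone and says nothing about the position of $Z(\mathbf{m})=xZ(a)+yZ(b)$ relative to $Z(\bfv)$; in particular the ``alignment'' you describe is in no way forced into the limit $\im(Z(b)/Z(a))\to 0$, and no contradiction with $W\subset\Stab^\circ(\sD)$ arises. (The phrase ``$\mathbf{m}$ is $\sigma$-semistable'' is also a category error: $\mathbf{m}$ is a class, and nothing in your setup produces a semistable object of that class.) Since everything downstream --- negative definiteness, determinant $>0$, hence $(\bfv,\bfw)\in\{0,\pm1\}$ --- rests on $H_W\cap\rad\chi=0$, the proposal as written does not close. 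To repair it you would need an actual argument that two distinct stable factors of the same phase cannot have classes summing into $\rad\chi$ (equivalently, that $\bfv$ and $\bfw$ remain independent in $K(\sD)/\rad\chi$); the argument you give would fail.
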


\begin{proof}
We have that $\bfv\in H_W$ has $\bfv^2<0$ and $\bfw$ must be a spherical class by Lemma. \ref{lem_spherical_has_spherical_JH}. So both $\bfv$ and $\bfw$ project to non-zero vectors in $K(\sD)/\rad \chi$. The intersection matrix of $H_W$ can be computed on $K(\sD)/\rad \chi$, where the Mukai pairing coincides with the opposite of the Cartan intersection matrix, so it is negative definite.

The signature of the form implies that the determinant of the intersection form be positive, which rules out all values of $(\bfv,\bfw)$ except for $0$ and $\pm 1$. The spherical classes are the integer solutions of 
$$ -2= (x\bfv+y\bfw)^2=-2x^2-2y^2 + 2(\bfv,\bfw)xy $$
in these three cases.
\end{proof}

Let $W$ be a wall for $\bfv$. Then, we denote by $\sigma_0$ a stability condition which only lies on the wall $W$, and consider a path in $\Stab(\sD)$ passing through $\sigma_0$ and connecting $\sigma^+$ and $\sigma^-$, two stability conditions lying in adjacent chambers.

\begin{lemma}\label{lem_wall_crossing_spherical}
For $W$ as above, suppose that there exists an indecomposable $\sigma_0$-semistable spherical object $E$ of class $\bfv$. Then there is a $\sigma^+$-stable spherical object $E^+$ of class $\bfv$. Likewise, there exist a $\sigma^-$-stable object $E^-$ of class $\bfv$.  
\end{lemma}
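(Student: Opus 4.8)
The plan is to read the Jordan--Hölder factors of $E$ at $\sigma_0$ off the lattice $H_W$ of Lemma \ref{lem_lattice_H_W}, and to realise $E^{\pm}$ as the two distinct nonsplit self-extensions of class $\bfv$ built from these factors, one stable on each side of the wall.

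Let $\phi$ be the common phase of $\bfv$ at $\sigma_0$ and work inside the finite-length abelian category $\sC\coloneqq\PP_{\sigma_0}(\phi)$, which contains $E$. Since $E$ is spherical and $\sigma_0$-semistable, Lemma \ref{lem_spherical_has_spherical_JH}(i) shows that its $\sigma_0$-stable factors are all spherical; they have phase $\phi$, so their classes lie in $H_W$ and sum to $\bfv$. If $E$ is already $\sigma_0$-stable we are done, as stability is open and $E$ persists in the adjacent chambers; so assume $E$ is strictly semistable. I would first dispose of case (i) of Lemma \ref{lem_lattice_H_W}: there $\bfv$ and $\bfw$ are $\chi$-orthogonal, and Serre duality together with $\hom=0$ between distinct stable objects of equal phase gives $\ext^1=0$ between the two stable classes, so $\sC$ is semisimple on them and no indecomposable of class $\bfv$ could be strictly semistable. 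In the remaining cases one has the unique decomposition $\bfv=\bfv_1+\bfv_2$ into spherical classes of $H_W$ lying in the $\sigma_0$-effective cone, with $(\bfv_1,\bfv_2)=1$; primitivity of $\bfv$ then forces the factors of $E$ to be exactly one $\sigma_0$-stable object $S_1$ of class $\bfv_1$ and one $S_2$ of class $\bfv_2$.

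The key computation is $\ext^1(S_1,S_2)=\ext^1(S_2,S_1)=1$: the Euler form is symmetric (Lemma \ref{lem_sD_CY-category}), so $\chi(S_i,S_j)=-(\bfv_1,\bfv_2)=-1$, and with $\hom(S_i,S_j)=0$ and Serre duality $\ext^2(S_i,S_j)\cong\hom(S_j,S_i)=0$ this pins the $\ext^1$ to dimension one in each direction. Hence, up to scalars, there are exactly two nonsplit extensions of class $\bfv$, namely $0\to S_2\to X_1\to S_1\to 0$ and $0\to S_1\to X_2\to S_2\to 0$, both $\sigma_0$-semistable. Relabelling the sides if needed so that $S_1$ has the larger phase at $\sigma^+$, I would put $E^+\coloneqq X_1$: its only nonzero proper subobject in $\sC$ is $S_2$, and $Z(E^+)=Z(S_1)+Z(S_2)$ has phase strictly between $\phi(S_2)$ and $\phi(S_1)$, so the sub $S_2$ has strictly smaller phase and $E^+$ is $\sigma^+$-stable; symmetrically $E^-\coloneqq X_2$ is $\sigma^-$-stable. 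Sphericity of $E^{\pm}$ is then automatic: a stable object is simple, so $\hom(E^{\pm},E^{\pm})=\C=\ext^2(E^{\pm},E^{\pm})$, and $\chi(\bfv,\bfv)=2$ forces $\ext^1(E^{\pm},E^{\pm})=0$.

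The step I expect to be most delicate is the stability verification: I must ensure that, once inside the chamber of $\sigma^+$, the extension $X_1$ has no destabilising subobject beyond the expected $S_2$. This is exactly where the finite length of $\sC$ and the local finiteness of the wall-and-chamber decomposition (Proposition \ref{prop_BM_finiteness_of_walls}) enter: for $\sigma^{\pm}$ chosen close enough to $\sigma_0$ every potential destabiliser has class in $H_W$ and reduces to the factors $S_1,S_2$, so the phase ordering $\phi(S_2)<\phi(E^+)<\phi(S_1)$ settles the matter. The only other point needing care is the bookkeeping that correctly assigns the two extensions $X_1,X_2$ to the two sides $\sigma^{\pm}$ of $W$.
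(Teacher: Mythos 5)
Your proposal is correct and follows essentially the same route as the paper's proof: identify the Jordan--H\"older factors of $E$ via Lemma \ref{lem_spherical_has_spherical_JH} and the rank-two lattice of Lemma \ref{lem_lattice_H_W}, use the symmetry of $\Ext^1$ in the K3-category to form the two nonsplit extensions of the two stable factors, and select on each side of $W$ the extension whose subobject has the smaller phase. The only differences are cosmetic: you dispose of case (i) of Lemma \ref{lem_lattice_H_W} explicitly and verify stability of $E^{\pm}$ directly from the phase ordering and the confinement of destabilising classes to $H_W$, where the paper instead concludes by citing \cite[Lemma 9.3]{BM14_MMP}.
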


\begin{proof}
By Lemma \ref{lem_spherical_has_spherical_JH}, the Jordan-H\"older factors of $E$ are spherical objects. In other words, $\bfv$ can be written as a sum of spherical classes in $C_{\sigma_0}$. If $E$ is $\sigma_0$-stable, there is nothing to prove. Otherwise, Lemma \ref{lem_lattice_H_W} shows that, up to the sign of $\bfw$, $E$ has a Jordan-H\"older filtration $$B\to E\to A$$
where $B$, $A$ have class $\bfw$ and $\bfv -\bfw$, respectively. 
Observe that $\Ext^1(A,B)=\Ext^1(B,A)\neq 0$ since $E$ is indecomposable, and denote by $E'$ the non-trivial extension $$ A\to E'\to B. $$

If $\phi_{\sigma^+}(\bfv-\bfw) > \phi_{\sigma^+}(\bfw)$ set $E^+= E$. If $\phi_{\sigma^+}(\bfv-\bfw) < \phi_{\sigma^+}(\bfw)$, set $E^+= E'$.
In any case, $E^+$ satisfies the assumptions of \cite[Lemma 9.3]{BM14_MMP}, and hence is $\sigma^+$-stable.
\end{proof}

\subsubsection{Wall-crossing for radical classes}

\begin{lemma}\label{lem_lattice_H_W_radical}
Let $\bfv$ be a primitive radical class in $K(\sD)$, and $W$ be a wall for $\bfv$. Then $H_W$ contains a spherical class $\bfw$ and the intersection matrix of $H_W$ is $$\begin{pmatrix}0 & 0\\0 & -2\end{pmatrix}.$$ 
\end{lemma}

\begin{proof}
Another generator of $H_W$, $\bfw$, is either radical or semi-rigid by Lemma \ref{lem_spherical_has_spherical_JH}. If it is semirigid, $(\bfw,\bfw)=0$, so the intersection form is zero on $H_W$  and $H_W$ contains no spherical classes.
 Then every $\sigma_0$-semistable object $E$ of class $\bfv$ must be stable on $W$, because it can only have one Jordan-H\"older factor, so $W$ is not a wall.
The only other possibility is that $\bf w$ is spherical and the intersection form is as claimed.
\end{proof}

\begin{lemma}\label{lem_wall_crossing_radical}
For $W$ as above, suppose that there exists an indecomposable $\sigma_0$-semistable semi-rigid object $E$ of class $\bfv$. Then there is a $\sigma^+$-stable semi-rigid object $E^+$ of class $\bfv$. Likewise, there exist a $\sigma^-$-stable semi-rigid object $E^-$ of class $\bfv$.  
\end{lemma}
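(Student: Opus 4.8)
The strategy is to imitate the proof of Lemma~\ref{lem_wall_crossing_spherical}, the only new feature being that the radical class $\bfv$ is isotropic rather than spherical. First I would dispose of the trivial case: by Lemma~\ref{lem_lattice_H_W_radical}, if the intersection form on $H_W$ vanishes then $W$ is not a wall and there is nothing to prove, so I may assume $H_W=\pair{\bfv,\bfw}$ with $\bfw$ spherical and $(\bfv,\bfw)=0$. A direct computation then shows that both $\bfw$ and $\bfv-\bfw$ are spherical, since $(\bfv-\bfw)^2=\bfv^2-2(\bfv,\bfw)+\bfw^2=-2$. I also record a simplification: semi-rigidity will be automatic for the output, because any $\sigma^{\pm}$-stable object $E$ of radical class $\bfv$ satisfies $\Hom(E,E)=\C$ and, by the K3 property, $\Ext^2(E,E)\simeq\Hom(E,E)^*$, so $\ext^1(E,E)=2-\chi(\bfv,\bfv)=2$ as $\bfv^2=0$. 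Thus it suffices to produce a $\sigma^{\pm}$-stable object of class $\bfv$.

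If $E$ is already $\sigma_0$-stable, then it remains stable in an open neighbourhood of $\sigma_0$, and in particular at the nearby $\sigma^{\pm}$, so I set $E^{\pm}=E$. Otherwise $E$ is strictly $\sigma_0$-semistable. By Lemma~\ref{lem_spherical_has_spherical_JH}(ii) at most one of its stable factors is semi-rigid; an isotropic class in $H_W$ is a multiple of $\bfv$, while the effective spherical classes are $\bfw$ and $\bfv-\bfw$ up to the sign of $\bfw$, so primitivity of $\bfv$ together with pointedness of the cone $C_{\sigma_0}$ forces $E$ to admit a two-step Jordan--H\"older filtration $B\to E\to A$ with $[B]=\bfw$ and $[A]=\bfv-\bfw$, both factors spherical.

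From $\chi(A,B)=-(\bfv-\bfw,\bfw)=-2$ and the vanishing of $\Hom(A,B)$ and $\Hom(B,A)$ for non-isomorphic stable objects of equal phase, Serre duality gives $\ext^1(A,B)=\ext^1(B,A)=2$; indecomposability of $E$ guarantees its extension class is non-zero, and I let $E'$ be a non-split extension $A\to E'\to B$. Crossing into the chamber of $\sigma^+$, I compare phases: if $\phi_{\sigma^+}(\bfv-\bfw)>\phi_{\sigma^+}(\bfw)$ I set $E^+=E$, and if $\phi_{\sigma^+}(\bfv-\bfw)<\phi_{\sigma^+}(\bfw)$ I set $E^+=E'$. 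In either case $E^+$ meets the hypotheses of \cite[Lemma~9.3]{BM14_MMP} and is therefore $\sigma^+$-stable; the symmetric choice produces a $\sigma^-$-stable object $E^-$, and semi-rigidity of both follows from the first paragraph.

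The main obstacle is the bookkeeping on the wall: one must verify that the Jordan--H\"older factors are exactly $A$ and $B$, each occurring once, rather than a longer chain with higher multiplicities. This is precisely where primitivity of $\bfv$, the classification of spherical classes in $H_W$ from Lemma~\ref{lem_lattice_H_W_radical}, and the pointedness of the effective cone $C_{\sigma_0}$ combine. The remaining delicate point is confirming that the chosen $E^{\pm}$ satisfies the exact phase inequalities required by \cite[Lemma~9.3]{BM14_MMP}, which proceeds exactly as in the spherical case.
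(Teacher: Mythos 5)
Your proposal is correct and follows essentially the same route as the paper: reduce to the case of a genuine wall, use Lemma \ref{lem_spherical_has_spherical_JH} and the structure of $H_W$ from Lemma \ref{lem_lattice_H_W_radical} to force a two-step Jordan--H\"older filtration with spherical factors $\bfw$ and $\bfv-\bfw$ (after normalizing $\bfw$ by the unique admissible multiple of $\bfv$, which is exactly the paper's choice of the integer $N$), then use indecomposability to produce the non-split reversed extension and conclude via the phase comparison and \cite[Lemma 9.3]{BM14_MMP}. The observation that semi-rigidity of $E^{\pm}$ is automatic from $\bfv^2=0$ and the K3 property is a harmless addition.
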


\begin{proof}
The proof is analogous to that of Lemma \ref{lem_wall_crossing_spherical}. If $E$ is $\sigma_0$-stable there is nothing to prove, otherwise it must have at least a spherical stable factor. Then one can write $\mathbf{v} = \mathbf{a} + \mathbf{b}$ with $\mathbf{a} \in C_{\sigma_0}$ spherical, and $\mathbf{b}\in C_{\sigma_0}$. By Lemma \ref{lem_lattice_H_W_radical}, the only spherical classes in $H$ are of the form $\pm\mathbf{w} + n\mathbf{v}$ with $n\in\Z$; then $\mathbf{b}$ has to be spherical as well, and there is only one integer $N$ such that $\mathbf{a} \coloneqq \mathbf{w} + N \mathbf{v}$ and $\mathbf{b}\coloneqq -\mathbf{w} + (1-N)\mathbf{v}$ are both $\sigma_0$-effective. Moreover, $\mathbf{a}$ and $\mathbf{b}$ cannot be expressed as the sum of other effective spherical classes. This implies that the Jordan-H\"older filtration of $E$ is
$$ \epsilon\colon \quad B\to E\to A. $$
Since $E$ is indecomposable, $(\epsilon)\neq 0$ in $\Ext^1(A,B)\simeq \Ext^1(B,A)$, and we can conclude as in Lemma \ref{lem_wall_crossing_spherical}. 
\end{proof}

\begin{proof}[{Proof of Theorem \ref{thm_enough_spherical_objects}}]
Suppose first that $\mathbf{v}$ is a spherical class. Proposition \ref{prop_CB_on_local_setting} shows that up to a sign there exists a $\tau_0$-semistable sheaf $E$ of class $\mathbf v$ which is spherical and indecomposable. Since $\Stab^\dagger(\sD)$ is connected and $\tau_0\in \Stab^\dagger(\sD)$, there is a path $\gamma$ of stability conditions in $\Stab^\dagger(\sD)$ connecting $\tau_0$ and $\sigma$. 

Observe that the objects $E^+$ produced in Lemma \ref{lem_wall_crossing_spherical} are in turn indecomposable, because they are stable with respect to some stability condition. Then, we can repeatedly apply Lemma \ref{lem_wall_crossing_spherical} and conclude.

A similar argument, where one uses Lemma \ref{lem_wall_crossing_radical} instead of Lemma \ref{lem_wall_crossing_spherical}, works for radical classes.
\end{proof}

\subsection{Proof of Proposition \ref{prop_circ=dagger}}\label{sec_proof_dagger=circ}

Now we prove that every stability condition in $\Stab^\dagger(\sD)$ (and hence in $\Stab_n^\dagger(\sD)$) satisfies 
\[ (*)\colon \quad  \im \frac{Z(b)}{Z(a)}>0.\]

It suffices to show that there does not exist a stability condition $\sigma_0=(Z_0,\sA_0)$ in $\Stab^\dagger(\sD)$ for which $\im\frac{Z(b)}{Z(a)}=0$. 

Suppose such $\sigma_0$ existed. Acting with $\C$, we may assume that $Z_0(a),Z_0(b)\in \R$. Assume moreover that $Z_0$ takes values in $\Q$. Then, choose $x,y\in\Z$ coprime such that 
\begin{equation}\label{eq_violate_support_property_0}
    xZ_0(a)+yZ_0(b)=0 
\end{equation}
and $\bfv\coloneqq xa+yb$ is a positive radical vector. Thus, $\bfv$ is a primitive radical vector with $Z_0(\bfv)=0$. This implies that there exists a neighborhood $V\subset \Stab^\dagger(\sD)$ of $\sigma_0$ such that no $\sigma\in V$ admits semistable objects of class $\bfv$, since semistability is a closed condition. But this contradicts Theorem \ref{thm_enough_spherical_objects}. 

If $Z_0$ takes values in $\R$, there may be no integer solutions to \eqref{eq_violate_support_property_0}, but for every $\epsilon >0$ there are integers $x,y$ such that 
$$ \abs{xZ_0(a)+yZ_0(b)}<\epsilon  $$
and $\bfv=xa+yb$ is a primitive radical vector. Choosing $\epsilon \ll 1$, the support property implies that there exists a neighborhood $V\subset \Stab^\dagger(\sD)$ of $\sigma_0$ such that no $\sigma\in V$ admits semistable objects of class $\bfv$, and we conclude in the same way. 

\subsection{Action of \texorpdfstring{$\C$}{C} and the orbit of normalized conditions}\label{ssec:C_Orbit}

Recall the $\C$-action on $\Stab(\sD)$ defined in Equation \eqref{eq_C_action} and denote by $\sK$ the orbit of $\Stab_n^\dagger(\sD)$. Here, we show that $\sK = \Stab^\dagger(\sD)$.  

It is straightforward to see $\sK \subseteq \Stab^\dagger(\sD)$, since $\sK$ is connected and intersects $\Stab^\dagger(\sD)$. 
To prove the other, fix $\tau\in\Stab^\dagger(\sD)$. By definition, there exists a path $\gamma\colon [0,1] \to \Stab^\dagger(\sD)$ such that $\gamma_0=\tau$ and $\gamma_1\in U$. We will use $\gamma$ to define $z_0\in \C$ and a modified path $\gamma'$, taking values in $\Stab^\dagger_n(\sD)$, such that $\gamma'_0=z_0\cdot\tau$, which shows $\tau\in \sK$. 

For every $t\in [0,1]$,  $\gamma_t=(Z_t,\PP_t)$ admits a semistable object $E_t$ of class $a$: this is true if $\gamma_t$ is generic by Theorem \ref{thm_enough_spherical_objects}, and hence for all $t$ since semistability is a closed condition.
Then define $\zeta_t\coloneqq Z_t(E_t)\in \C^*$ for all $t$. We can chose $E_t$ in a way that $\zeta\colon t \mapsto \zeta_t$ is continuous, hence a path in $\C^*$: since $E_0$ is $\gamma(0)$-semiststable, then it is semistable in an interval $[0,t_1]$ with $0\leq t_1\leq 1$, and hence we can pick $E_t=E_0$ for all $t\in [0,t_1]$. Since $\gamma(t_1)$ is at a wall for $a$, by Lemma \ref{lem_wall_crossing_radical} there exists $E_1$ which is $\gamma(t)$-semistable for $ t\in [t_1,t_2]$, with $t_1<t_2\leq 1$. Set $E_t=E_1$ for $t_1< t \leq t_2$. Since $Z_{t_1}(E_0)=Z_{t_1}(E_1)$, the function $\zeta$ is continuous at $t_1$. We can iterate this process since walls for $a$ are finite by Proposition \ref{prop_BM_finiteness_of_walls}. 

Since $\gamma_1\in \Stab^\dagger_n(\sD)$, we have $\zeta_1=1$, so the principal value $z\coloneqq \mathrm{Log}\,\zeta$ defines a continuous function $z\colon [0,1] \to \C$ such that $z_1=0$. We can finally define the path 
\begin{align*}
\gamma'\colon  [0,1] & \to \Stab^\dagger_n(\sD)\\
  t \, &\mapsto \, z_t\cdot \gamma(t).
\end{align*}

By construction, every stability condition $\gamma'(t)$ is normalized, and $\gamma'_1=\gamma_1\in U$. Then $\gamma'_0=z_0 \cdot \tau \in \Stab^\dagger_n(\sD)$, and $\tau_0\in \sK$.

If $\tau\in \Stab_n(\sD)$, the complex number $z_0$ has the form $z_0=i2\pi k$ for some $k\in \Z$, and acting with $z_0$ is the same as acting with $[2k]\in \Aut(\sD)$: in other words, the connected components of $\Stab_n(\sD)$ are even shifts of $\Stab_n^\dagger(\sD)$. Arguing as above one sees that $\Stab^\dagger(\sD)$ is a $\C^*$-bundle over $\Stab_n(\sD)$.


\section{Stability conditions on \texorpdfstring{$\sD$}{mathcal D}}\label{sec_stabCondOnD}

In this section we study the action of $\Br(\sD)$ on $\Stab(\sD)$ and show that it preserves $\Stab_n^\dagger(\sD)$. Then, we describe the image of $\Stab_n^\dagger(\sD)$ in $\Hom(K(\sD),\C)$ and show $\pi(\Stab_n^\dagger(\sD))=\mathsf{X}_{\mathrm{reg}}$ (Prop. \ref{prop_image_of_StabDagger}). Finally, we prove our main results in Section \ref{ssec_MainResults}.

\subsection{Group actions and the image of the central charge map}

The group of autoequivalences of $\sD$ acts on $\Stab(\sD)$ as in Equation \eqref{eq_Aut_Action}.
The following discussion shows that the autoequivalences in $\Br(\sD)$ preserve $\Stab_n^\dagger(\sD)$. It follows that the central charge map is equivariant with respect to the actions of $\Br(\sD)$ and $W$ on $\Stab_n^\dagger(\sD)$ and $\Hom(F,\C)$ respectively.

Recall from Section \ref{sec_fund-dom-and-boundary} that the boundary of $D$ (defined as a fundamental domain of $W$ in $\Hom(F,\C)$) is contained in the union of $Y_{u,\pm}$ walls $W_{v,\pm}$ as $u,v$ vary in the vertices of $\abs{\Gamma_f}$ and $\abs{\Gamma_a}$ respectively. Denote by $\tilde{Y}_{u,\pm}$, $\tilde{W}_{v,\pm}$ the inverse images of $Y_{u,\pm}$, $W_{v,\pm}$ to $\overline{U}$ (we use Prop. \ref{prop_FundamentalRegionU} here). 

\begin{lemma}\label{lem_Arend_concern}
Let $\sigma=(Z,\sA)$ be a point in the boundary of $U$. Then $\sigma$ lies in the union of $\tilde{W}_{v,\pm}$, $\tilde{Y}_{u,\pm}$.
\end{lemma}

\begin{proof}
This follows from the description of the boundary of $D$ in Sec. \ref{sec_fund-dom-and-boundary}: the only other possibility is that $\im Z(b)=0$, but this is excluded by Proposition \ref{prop_circ=dagger}.
\end{proof}

Recall the notation of Equation \eqref{eq_notation_Si}, and let $v\in \abs{\Gamma}$:

 \begin{lemma}
Let $\sigma=(Z,\sA)$ be a point in the boundary of $U$ contained in a unique wall among the $\tilde{W}_{v,\pm}$'s.
Then there is an element $T\in\Br(\sD)$ such that $T\cdot\sigma$ also lies in the boundary of $U$. More precisely, we may pick $T=\Phi_{S_v}$  if $\sigma \in \tilde{W}_{v,+}$, and $T=\Phi^{-1}_{S_v}$ if $\sigma \in \tilde{W}_{v,-}$.
\end{lemma}

\begin{proof}
Suppose $\sigma\in \tilde{W}_{v,-}$. Set $S\coloneqq S_v$. Let $V$ be a small neighborhood of $\sigma\in\Stab(\sD)$, and consider the open subset
$$ V^+=\set{\sigma'=(Z',\sA') \in V \st \im Z(S)<0}. $$
Arguing as in \cite[Lemma 3.5]{Bri09_kleinian}, we claim that we can choose $V$ small enough so that  $\Phi_S^{-1}(V^+)\subset U$, hence $\Phi^{-1}_S\sigma$ lies in the closure of $U$. 
Thus, we need to show that for sufficiently small $V$ the heart of all $\sigma'\in V^+$ is equal to $\Phi_S(\mathcal{A}_R)\subset \sD$. By Lemma \ref{lem_HeartsContainEachOther}, it suffices to show that $\Phi_S(M)$ lies in the heart of any $\sigma'\in V^+$, for all the objects $M$ listed in Prop. \ref{prop_classification-objects-A}. 

We verify this on a case by case basis: assume first that $S=t_i^j$, $j\neq 0$. Then:

\noindent\textit{Case 1.} Suppose $L$ is a line bundle on $X$. Then $L$ is locally of the form $\sO((k/a_{i})p_i)$ for some $k\in\set{0,...,a_i}$, and one computes 
    $$\Hom^\bullet (t_i^j,L)=\begin{cases}
    \C[-1] \mbox{ if }k=j\\ 
    \C[-2] \mbox{ if }k+i=j\\
    0 \mbox{ otherwise.}
    \end{cases}$$
    
    If $\Hom^1(t_i^j,L)\neq 0$, then there is a non-split short exact sequence in $\mathcal{A}_R$
    $$ L\to \Phi_SL \to t_i^j. $$
    It follows that $\Phi_S L$ lies in the heart of $\sigma$ and its semistable factors have phases in $(0,1)$. Choosing $V$ small enough ensures that this is the case for all $\sigma'\in V^+$ too. 
    
    If $\Hom^2(t_i^j,L)\neq 0$ then $\Phi_SL$ fits in a triangle
    $$ L\to \Phi_SL \to t_i^j[-1], $$
    which implies that $\Phi_SL$ lies in $\sA'$, because so do $L$ and $t_i^j[-1]$.
    
    If $\Hom^\bullet(t_i^j,L)= 0$ then $\Phi_SL=L$ and the same argument applies.
    
    \noindent\textit{Case 2.} The same argument applies to $\Phi_{t_i^j}(\sO_q)=\sO_q$ for all $q\neq p_1$, and to all sheaves supported away from $p_i$;

    \noindent\textit{Case 3.} The only possibilities for $\Phi_St_i^k$, $k\neq j,0$ are that $\Hom^\bullet(t_i^j,t_i^k)=0$ or $\Hom^\bullet(t_i^j,t_i^k)=\C[-1]$. Both are analogous to the case of a line bundle above. Consider $\Phi_S(S)=S[-1]$. Since $S$ is $\sigma$-stable of phase 1, we may assume by shrinking $V$ that $S$ is $\sigma'$-stable with phase at most 2. Moreover, $S$ must have phase bigger than 1 in $\sigma'$, so $S[-1]$ lies in the heart of $\sigma'$. Similarly, one sees that $\Phi_St_i^0[-1]\in \sA'$.
    
    \noindent\textit{Case 4.} If $M$ is a cluster supported at $p_i$, then $M$ has a non-split composition series with factors the $t_i^j$ for $j=0,...,a_i-1$, where $t_i^0$ is the last factor. Then, $\Phi_SM$ has a non-split composition series with all factors in $\sA'$ but the last one in $\sA'[1]$, and $Z'(\Phi_SM)=-Z'(a)=-1$, so $\Phi_S(M)\in \sA'$.
    
    \noindent\textit{Case 5.} It remains to show the claim for $N[-1]$ where $N$ is the proper quotient of a cluster $M$, with kernel $K$. Write the triangle
    \begin{equation}
        \label{eq_triangleM[-1]_N[-1]_K}
     M[-1] \to N[-1] \to K
    \end{equation}
    and apply $\Phi_S$. By the discussion above, $\Phi_S(K)\in\sA'$ since $K$ is obtained by repeated extensions of $t_i^j$'s with $j>0$, and $\Phi_S(M)[-1]$ is stable of phase 0. Then $\Phi_S(N)[-1]\in\sA'$, because the triangle \eqref{eq_triangleM[-1]_N[-1]_K} does not split.

Similar computations show that $\Phi_S(M)\in \sA'$ for all $M\in\mathcal{A}_R$ and $S=\sO_{X}$. A symmetric argument settles the case $\sigma\in \tilde{W}_{v,+}$. 
\end{proof}

\begin{lemma}
Let $\sigma=(Z,\sA)$ be a point in the boundary of $U$ contained in a unique wall among the $\tilde{Y}_{u,\pm}$.  
Then there is an element $T\in\Br(\sD)$ such that $T\sigma$ also lies in the boundary of $U$. More precisely, we may pick $T=\rho_u$  if $\sigma \in \tilde{Y}_{u,+}$, and $T=\rho_{u}^{-1}$ if $\sigma \in \tilde{Y}_{u,-}$.
\end{lemma}

\begin{proof}
If $\sigma\in \tilde{Y}_{u,+}$, observe that we can choose a small neighborhood $V$ of $\sigma$ in $\Stab(\sD)$ so that every $\tau\in V$ has heart $\mathcal{A}_R$. Consider the open subset
$$ V'=\set{\tau=(Z',\mathcal{A}_R)\in V \st \tau \notin \bar U} $$
For $\tau\in V'$, we then have that $\rho_u^{-1}Z'= \rho_u^{-1}\re Z' +i\im Z'$ belongs to $D$. Then, it is enough to show $\rho_u(\mathcal{A}_R)=\mathcal{A}_R$ to conclude $\rho_u\tau\in U$, so that $\rho_u\sigma$ lies in the closure of $U$. 

Using Prop. \ref{prop_classification-objects-A}, one sees that $\PP_\sigma(1)$ only contains objects whose class is a multiple of $a$. Since $\rho_u$ preserves the imaginary part of $Z'$ and fixes the class $a$, we have $\PP_\tau(1)=\PP_\sigma(1)$. Then, the only possibility is that for $u\in \abs{\Gamma_f}$ one has $\rho_u(\mathcal{A}_R)=\mathcal{A}_R[2n]$, for some integer $n$. We prove that $n$ must be $0$. 
One readily checks
$$ \rho_{(0,1)}(\sO_X(1)) =\Phi_{\sO_X}\Phi_{\sO_X(1)}(\sO_X(1)) \simeq \Phi_{\sO_X}(\sO_X(1)[-1]) = \sO_X(-1). 
$$
using Lemma \ref{lem_properties_of_twists}. This implies that $\rho_0(\mathcal{A}_R)=\mathcal{A}_R$. Now one has
\begin{equation}
    \begin{split}
       \rho_{(i,1)}(\sO_X(-1)) & = \Phi_{(t_i^1)}\rho_{(0,1)}\Phi_{(t_i^1)}\rho_{(0,1)}^{-1}(\sO_X(-1))\\
                            & \simeq \Phi_{(t_i^1)}\rho_0\Phi_{(t_i^1)}(\sO_X(1))\\
                            & \simeq \Phi_{(t_i^1)}\Phi_{\sO_X}\Phi_{\sO_X(1)}\Phi_{(t_i^1)}(\sO_X(1))\\
                            & \simeq \Phi_{(t_i^1)}\Phi_{\sO_X}(t_i^1)\\
                            & \simeq \sO_X                      , 
    \end{split}
\end{equation}
by repeatedly applying Lemma \ref{lem_properties_of_twists}. For $\rho_{(i,j)}$, $j>1$, we claim $\rho_{(i,j)}(\sO_X)\simeq \sO_X$. This is a consequence of the fact that $\sO_X(d)$ is orthogonal to $t_i^j$ for $d=0,-1$, all $i$ and all $j>1$. Indeed, one computes
\begin{equation}
    \begin{split}
         \rho_{(i,2)}(\sO_X) & = \Phi_{(t_i^2)}\rho_{(i,1)}\Phi_{(t_i^2)}\rho_{(i,1)}^{-1}(\sO_X)\\
                            & \simeq \Phi_{(t_i^2)}\rho_{(i,1)}\Phi_{(t_i^2)}(\sO_X(-1))\\
                            & \simeq \Phi_{(t_i^2)}\rho_{(i,1)}(\sO_X(-1))\\
                            & \simeq \Phi_{(t_i^1)}(\sO_X)\\
                            & \simeq \sO_X,
    \end{split}
\end{equation}
and proves the same claim for $j>2$ inductively. This concludes the proof in the case $\sigma\in \tilde{Y}_{i,+}$. The case $\sigma\in \tilde{Y}_{i,-}$ is similar.
\end{proof}

\begin{proposition}\label{prop_fund_domain_br}
For any $\sigma \in \Stab_n^\dagger(\sD)$, there is an autoequivalence $\Phi\in\Br(\sD)$ such that $\Phi\cdot\sigma\in U$.
\end{proposition}

\begin{proof}
Same as the proof of Prop. 4.13 in \cite{Ike14}.
\end{proof}

Let $\pi^{-1}(\mathsf{X}_{\mathrm{reg}})^\dagger$ be the connected component of $\pi^{-1}(\mathsf{X}_{\mathrm{reg}})$ containing $U$. Since it is a subset of $\Stab_n^\dagger(\sD)$ we have:

\begin{corollary}\label{cor_fund_domain_br_C}
For any $\sigma \in \pi^{-1}(\mathsf{X}_{\mathrm{reg}})^\dagger$, there is an autoequivalence $\Phi\in\Br(\sD)$ 
such that $\Phi\cdot \sigma \in U$.
\end{corollary}


\begin{lemma}\label{lem_image_contains_Xreg}
The image of $\pi\colon \Stab_n^\dagger(\sD)\to \Hom(F,\C)$ contains $\mathsf{X}_{\mathrm{reg}}$.
\end{lemma}

\begin{proof}
$\Stab_n^\dagger(\sD)$ contains the orbit of $U$ under $\Br(\sD)$. Since the action of $\Br(\sD)$ lifts that of $W$ on $\Hom(F,\C)$, the orbit of $U$ under the action of $\Br(\sD)$ is mapped to $\mathsf{X}_{\mathrm{reg}}\subset \Hom(F,\C)$.
\end{proof}

The next goal of our discussion is to prove the following:
\begin{proposition}\label{prop_image_of_StabDagger}
The projection $\pi$ maps $\Stab_n^\dagger(\sD)$ onto $\mathsf{X}_{\mathrm{reg}}$, so that $\pi(\Stab_n^\dagger(\sD))=\mathsf{X}_{\mathrm{reg}}$. 
\end{proposition}

\begin{proof}
By Lemma \ref{lem_image_contains_Xreg}, it is sufficient to show that $\pi(\Stab_n^\dagger(\sD))\subseteq \mathsf{X}_{\mathrm{reg}}$, or, equivalently, that $\Stab_n^\dagger(\sD)\subseteq \pi^{-1}(\mathsf{X}_{\mathrm{reg}})^\dagger$. To show this, it is enough to check that $\Stab_n^\dagger(\sD)$ contains no boundary points of $\pi^{-1}(\mathsf{X}_{\mathrm{reg}})^\dagger$. Any such boundary point $\sigma=(Z,\PP)$ is projected to $Z\in\partial \mathsf{X}_{\mathrm{reg}}$. From the definition of $\mathsf{X}_{\mathrm{reg}}$ in Prop. \ref{prop_Saito_action_of_W}, either $Z$ vanishes on a ray in $\R_{>0}(R)$, or $\im Z(b)=0$.

In the latter case, Proposition \ref{prop_circ=dagger} ensures that $\sigma \notin \Stab_n^\dagger(\sD)$. Then, suppose $\alpha$ is a positive root such that $Z(\alpha)=0$. If $\sigma\in\overline{\Stab_n^\dagger(\sD)}$, by proposition \ref{prop_fund_domain_br} there is an element $\Phi\in\Br(\sD)$, such that $\Phi\cdot \sigma = (Z',\PP') \in \overline{U}$, and $[\Phi] \alpha=\beta\in \Pi$. Then we have $Z'(\beta)=0$. However, by Lemma \ref{lem_Pi_are stable in U}, for all $\beta\in\Pi$ there are objects of class $\beta$ which are semistable for all stability conditions in $U$, hence $\Phi\cdot\sigma$ violates the support property, and therefore $\sigma\notin \Stab_n^\dagger(\sD)$.
\end{proof}

\begin{proposition}
The action of $\Br(\sD)$ on $\Stab_n^\dagger(\sD)$ is free and properly discontinuous. 
\end{proposition}

\begin{proof}
First, we check that the action of $\Br(\sD)$ is free. By Cor. \ref{cor_fund_domain_br_C}, it is enough to show this for $\sigma\in U$. Assume then that $\sigma=\Phi\sigma$ for some $\Phi\in \Br(\sD)$ and $\sigma\in U$. We have $Z(\Phi(-))=Z(-)$, hence $[\Phi]=\id$ on $K(\sD)$. So $[\Phi(S_m)]=[S_m]$ for all $m$. Up to isomorphism, $S_m$ is the only object in $\mathcal{A}_R$ in its class (this is readily observed translating $\mathcal{A}_R$ to $\Psi^{-1}(\mathcal{A}_R)$), hence $\Phi(S_m)\simeq S_m$ for all $m$. Then $\Phi\simeq \id$ in $\Br(\sD)$ by Lemma \ref{lem_Phi(S_m)=S_m}.

To show that the action of $\Br(\sD)$ is properly discontinuous, it is enough to exhibit, for every non-trivial $\Phi\in\Br(\sD)$ and every $\sigma\in U$, a neighborhood $V$ of $\sigma$ such that $\Phi (V)\cap V=\emptyset$. If $[\Phi]\neq\id$, the existence of $V$ follows from Prop. \ref{prop_Saito_action_of_W}. If $[\Phi]=\id$, then it is a consequence of Lemma \ref{lem_BriCloseAreEqual}.
\end{proof}


\begin{lemma}\label{lem_Phi(S_m)=S_m}
Suppose $\Phi\in\Br(\sD)$ satisfies $\Phi(S)\simeq S$ for all $S\in\Pi$. Then $\Phi\simeq\id$. 
\end{lemma}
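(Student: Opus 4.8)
The plan is to convert the hypothesis $\Phi(S)\simeq S$ for $S\in\Pi$ into a centrality statement, and then to promote this to an isomorphism of functors. Recall the Seidel--Thomas conjugation formula $\Phi\circ\Phi_S\circ\Phi^{-1}\simeq\Phi_{\Phi(S)}$, valid for any $\Phi\in\Aut(\sD)$ and any spherical object $S$ \cite{ST01}. Applying it to each $S=S_m\in\Pi$ and using $\Phi(S_m)\simeq S_m$ gives $\Phi\circ\Phi_{S_m}\circ\Phi^{-1}\simeq\Phi_{S_m}$, so that $\Phi$ commutes with every generator of $\Br(\sD)$; hence $\Phi\in Z(\Br(\sD))$. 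Moreover $\Phi$ fixes the classes $[S_m]$, which form a basis of $K(\sD)$, so $[\Phi]=\id$ on $K(\sD)$. Since $W$ embeds in $\Aut(K(\sD)_\R,\chi_\sD)$ by Definition \ref{def_gen_root_syst}, the element $q(\Phi)\in W$ is trivial, i.e. $\Phi\in\ker q\cap Z(\Br(\sD))$.

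Next I would pass to the finite-length heart $\sH\subset\sD$ whose simple objects are exactly the $S_m$: this is the algebraic heart attached to the full exceptional collection $\mathbb F$, equivalently the standard heart of the CY-$2$ completion of the canonical algebra (Remark (i) of the introduction). Because $\sH$ is the extension closure of $\{S_m\}$ and $\Phi$ is an exact autoequivalence with $\Phi(S_m)\simeq S_m$, we obtain $\Phi(\sH)=\sH$, so that $\Phi|_\sH$ is an autoequivalence of the abelian category $\sH$ fixing every simple object and acting trivially on $K(\sH)=K(\sD)$. It then suffices to prove $\Phi|_\sH\simeq\id_\sH$: indeed $\sH$ generates $\sD$, and an exact autoequivalence restricting to the identity on a bounded generating heart is isomorphic to $\id_\sD$.

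The hard part will be this final rigidity step, and it is genuinely needed: passing through the stability manifold only shows that $\Phi$ preserves the slicing of a $\sigma\in U$ (one has $\pi(\Phi\sigma)=\pi(\sigma)$ by $[\Phi]=\id$, and the $S_m$ are $\sigma$-stable by Lemma \ref{lem_Pi_are stable in U}), which by Lemma \ref{lem_BriCloseAreEqual} gives $\Phi\sigma=\sigma$ but \emph{not} $\Phi\simeq\id$ at the level of functors. To get the functor statement, write $\sH$ as the nilpotent representations of the quiver with potential attached to $\Gamma$, whose arrows give a basis of $\bigoplus_{m,n}\Ext^1_\sD(S_m,S_n)$; an autoequivalence fixing all simples is, up to natural isomorphism, induced by an automorphism of the quiver fixing every vertex, and these form a torus rescaling the arrows. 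The obstacle is to exclude a nontrivial rescaling. Here I would exploit the two remaining constraints: $\Phi$ preserves the Serre (CY-$2$) pairing between $\Ext^1_\sD(S_m,S_n)$ and $\Ext^1_\sD(S_n,S_m)$ and it preserves the potential, which cut the torus down to a global scalar realized by the grading automorphism; this scalar acts as $\id$ on $\sH$ up to natural isomorphism. Combined with $\Phi\in Z(\Br(\sD))$, which forbids the residual nontrivial elements of $\Br(\sD)$ restricting to such a scalar, this yields $\Phi|_\sH\simeq\id_\sH$ and hence $\Phi\simeq\id_\sD$.
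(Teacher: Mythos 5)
Your opening moves are fine: the Seidel--Thomas conjugation formula gives $\Phi\in Z(\Br(\sD))$, and since the classes $[S_m]$ form a basis of $K(\sD)$ you get $[\Phi]=\id$ on $K(\sD)$. But the argument then develops two genuine gaps. First, a set-up problem: there is no finite-length heart whose simple objects are exactly the $S_m$, because $\Hom^0_\sD(\iota_*\sO,\iota_*\sO(1))\neq 0$ (Lemma \ref{lem_sD_CY-category} together with $H^0(X,\sO(1))\neq 0$), so two distinct members of $\Pi$ admit a nonzero degree-zero morphism and cannot both be simple in a single abelian category. This can be repaired by shifting $S_{-1}$, but the resulting heart is not one of the hearts the paper controls (the simples of $\sA$ are clusters and the $t_i^j$, not the line bundles), so its existence and the finite-length property would need a separate argument, as would the claim that an exact autoequivalence restricting to the identity on a generating heart is isomorphic to $\id_\sD$.

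Second, and more seriously, your ``final rigidity step'' is precisely the content of the lemma and is not actually proved. The assertion that an autoequivalence of $\sD$ fixing all simples of $\sH$ is, up to natural isomorphism, induced by an automorphism of a quiver with potential rescaling the arrows is unjustified on two counts: (i) it presupposes that $\Phi$ is realized by a (dg-)algebra automorphism, a standardness statement you have not established for an abstract element of $\Aut(\sD)$; and (ii) even granting that, automorphisms of the completed path algebra fixing the idempotents are not only torus rescalings --- there are unipotent automorphisms adding higher-order terms to arrows, and one must argue separately that these induce the identity up to natural isomorphism. The subsequent appeal to the CY-2 pairing, the potential, and centrality in $\Br(\sD)$ is a plan rather than an argument. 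The paper takes a completely different, geometric route that avoids this: it transports $\Phi$ across the McKay equivalence to $\Phi'\in\Aut(D^b(Y'))$, checks that $\Phi'$ fixes all skyscraper sheaves (points off $X'$ by orthogonality to $\Pi$; points of $X$ via the resolution $i_*\sO_X(-1)\to i_*\sO_X\to\sO_p$ by fixed line bundles; then clusters, then $t_i^0$, then points of the exceptional curves), and concludes by \cite[Cor.\ 5.23]{Huy06} that $\Phi'\simeq(-\otimes\sL)\circ\phi_*$ with $\phi=\id$ and $\sL$ trivial. To complete your route you would essentially need to prove a quiver-with-potential analogue of that corollary.
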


\begin{proof}
We consider $\Phi$ as an element of $\Aut(D^b(\Tot(\omega_X)))$, and we study the equivalent problem of showing that
$$ \Phi'\coloneqq \Psi^{-1} \circ \Phi \circ \Psi $$
is the identity on $\Aut(D^b(Y'))$, where $Y'$ denotes the crepant resolution of $\Tot(\omega_X)$, under the assumption that elements of $\Psi^{-1}\Pi$ are fixed (recall the notation of Section \ref{sec_tri_cat_loc_ell_quot}).

First, observe that for $p\in Y'\setminus X'$ we have $\Phi(\sO_p)\simeq \sO_p$ because all $S\in \Pi$ are supported on $X$ and hence orthogonal to $\sO_p$. If $p\in X\subset X'$, applying $\Phi$ to the short exact sequence
\[ 0\to i_*\sO_X(-1) \xrightarrow{f} i_*\sO_X \to \sO_p \to 0  \]
one obtains a non zero map $\Phi(f)$ of pure one-dimensional sheaves, fitting in a triangle
\[ i_*\sO_X(-1) \xrightarrow{\Phi(f)} i_*\sO_X \to \Phi(\sO_p).  \]
This implies that $H^{-1}\Phi(\sO_p)=0$ and $\Phi(\sO_p)$ is a skyscraper supported at a point of $X$. 

Now let $\set{p}= X\cap C_{i,1}$. Then the skyscraper supported at $p$ must be fixed by $\Phi'$, because it admits a restriction map $\sO_{C_{i,1}}(-1)\to \sO_{p}$ and $\Phi'$ fixes $\sO_{C_{i,1}}(-1)=\Psi^{-1}t_i^1$. Let $M_p$ denote the cluster corresponding to $p$. Then $\Phi$ fixes $M_p$ because $\Phi'$ fixes $\sO_p$. Moreover, $M_p$ has a unique composition series by the $t_i^j$, which are all fixed by $\Phi$ except possibly $t_i^0$. Then $\Phi$ must also fix $t_i^0$ for $i=1,...,r$. 

Then, since every cluster has a composition series with factors the simple sheaves $t_i^j$ and $\Phi$ fixes the $t_i^j$ for all $j=0,...,a_i-1$, it must also send any cluster to a cluster. In other words, $\Phi'$ sends skyscraper sheaves of points on any exceptional curve $C_i$ to skyscraper sheaves. 

Once can then apply \cite[Cor. 5.23]{Huy06}, which implies that there exists an automorphism $\phi$ of $Y'$ such that  $\Phi'(\sO_t)\simeq \sO_{\phi(t)}$ and $\Phi'\simeq (- \otimes \sL)\circ \phi_*$ for some line bundle $\sL$ on $Y'$. The automorphism $\phi$ is the identity, because it is the identity on the dense open complement of $X'$. The Picard group of $\Tot(\omega_X)$ is isomorphic to $\Pic(X)\bigoplus(\oplus \Z\set{C_{i,j}})$ hence the only line bundle fixing the $\Psi^{-1}(S)$ with $S\in\Pi$ is the trivial one. Then, $\Phi'\simeq \id$ as we wished to prove.
\end{proof}

\subsection{Proof of main results} \label{ssec_MainResults}

Denote  by $\bar\pi$ the composition of the maps $\Stab_n^\dagger(\sD)\xrightarrow{\pi} \mathsf{X}_{\mathrm{reg}}\to \mathsf{X}_{\mathrm{reg}}/W$. Then we have:
 
\begin{thm}\label{thm_mainThm}
The map 
$$\bar\pi\colon\Stab_n^\dagger(\sD) \to \mathsf{X}_{\mathrm{reg}}/W$$
is a covering map, and the group $\Br(\sD)$ acts as group of deck transformations.
\end{thm}

\begin{proof}
We only need to show that the quotient of $\Stab_n^\dagger(\sD)$ by $\Br(\sD)$ coincides with $\mathsf{X}_{\mathrm{reg}}/W$. Equivalently, for every pair of stability conditions $\sigma_1$, $\sigma_2$ satisfying $\bar{\pi}(\sigma_1)=\bar{\pi}(\sigma_2)$, we need to exhibit an element $\Phi\in\Br(\sD)$ such that $\sigma_1=\Phi \cdot \sigma_2$.

By Corollary \ref{cor_fund_domain_br_C}, it is enough to show this when $\sigma_1 \in U$. Moreover, there exists $\Phi\in\Br(\sD)$ such that $\sigma_2'\coloneqq \Phi \cdot \sigma_2$ lies in $U$. Then we have 
$$ \pi(\sigma_2') = [\Phi]\cdot \pi(\sigma_2)= [\Phi]\cdot \pi(\sigma_1)$$
in $D$. Since $U$ and $D$ are homeomorphic, this implies that $[\Phi]=\id$ and $\sigma_2'=\sigma_1$. 
\end{proof}

Let $\Aut^\dagger(\sD)\subset \Aut(\sD)$ be the subgroup of autoequivalences which preserve the component $\Stab_n^\dagger(\sD)$. Write $\Aut^\dagger_*(\sD)$ for the quotient of $\Aut^\dagger(\sD)$ by the subgroup of autoequivalences which act trivially on $\Stab_n^\dagger(\sD)$. 

\begin{corollary}\label{cor_autoequivalences}
There is an isomorphism
$$\Aut^\dagger_*(\sD) \simeq  \Br(\sD) \rtimes \Aut(\Gamma), $$
Where $Aut(\Gamma)$ acts on $\Br(\sD)$ by permuting the generators. 
\end{corollary}

\begin{proof}
The argument is the same as \cite[Cor. 1.4]{Bri09_kleinian}. Observe that unlike in \cite{Bri09_kleinian}, the shift autoequivalence does not belong to $\Aut^\dagger(\sD)$, since it maps  $\Stab_n^\dagger(\sD)$ to a different connected component in $\Stab_n(\sD)$ if it's an even shift or outside $\Stab_n(\sD)$ if it's odd.
\end{proof}

\bibliographystyle{amsplain}
\bibliography{./bibliography}

\providecommand{\bysame}{\leavevmode\hbox to3em{\hrulefill}\thinspace}
\providecommand{\MR}{\relax\ifhmode\unskip\space\fi MR }
\providecommand{\MRhref}[2]{%
  \href{http://www.ams.org/mathscinet-getitem?mr=#1}{#2}
}
\providecommand{\href}[2]{#2}
\begin{thebibliography}{10}

\bibitem{BM_local_p2}
Arend Bayer and Emanuele Macr\`\i, \emph{The space of stability conditions on
  the local projective plane}, Duke Math. J. \textbf{160} (2011), no.~2,
  263--322. \MR{2852118}

\bibitem{BM14_MMP}
\bysame, \emph{M{MP} for moduli of sheaves on {K}3s via wall-crossing: nef and
  movable cones, {L}agrangian fibrations}, Invent. Math. \textbf{198} (2014),
  no.~3, 505--590. \MR{3279532}

\bibitem{BMW15}
Aaron Bertram, Steffen Marcus, and Jie Wang, \emph{The stability manifolds of
  {$\Bbb P^1$} and local {$\Bbb P^1$}}, Hodge theory and classical algebraic
  geometry, Contemp. Math., vol. 647, Amer. Math. Soc., Providence, RI, 2015,
  pp.~1--17. \MR{3444996}

\bibitem{BBD82}
Alexander~A. Be\u{\i}linson, Joseph Bernstein, and Pierre Deligne,
  \emph{Faisceaux pervers}, Analysis and topology on singular spaces, {I}
  ({L}uminy, 1981), Ast\'{e}risque, vol. 100, Soc. Math. France, Paris, 1982,
  pp.~5--171. \MR{751966}

\bibitem{Bri02_flops}
Tom Bridgeland, \emph{Flops and derived categories}, Invent. Math. \textbf{147}
  (2002), no.~3, 613--632. \MR{1893007}

\bibitem{Bri07_triang_cat}
\bysame, \emph{Stability conditions on triangulated categories}, Ann. of Math.
  (2) \textbf{166} (2007), no.~2, 317--345. \MR{2373143}

\bibitem{Bri08_k3}
\bysame, \emph{Stability conditions on {$K3$} surfaces}, Duke Math. J.
  \textbf{141} (2008), no.~2, 241--291. \MR{2376815}

\bibitem{Bri09_spaces}
\bysame, \emph{Spaces of stability conditions}, Algebraic geometry---{S}eattle
  2005. {P}art 1, Proc. Sympos. Pure Math., vol.~80, Amer. Math. Soc.,
  Providence, RI, 2009, pp.~1--21. \MR{2483930}

\bibitem{Bri09_kleinian}
\bysame, \emph{Stability conditions and {K}leinian singularities}, Int. Math.
  Res. Not. IMRN (2009), no.~21, 4142--4157. \MR{2549952}

\bibitem{BKR01}
Tom Bridgeland, Alastair King, and Miles Reid, \emph{The {M}c{K}ay
  correspondence as an equivalence of derived categories}, J. Amer. Math. Soc.
  \textbf{14} (2001), no.~3, 535--554. \MR{1824990}

\bibitem{Dou02}
Michael~R. Douglas, \emph{Dirichlet branes, homological mirror symmetry, and
  stability}, Proceedings of the {I}nternational {C}ongress of
  {M}athematicians, {V}ol. {III} ({B}eijing, 2002), Higher Ed. Press, Beijing,
  2002, pp.~395--408. \MR{1957548}

\bibitem{GL87}
Werner Geigle and Helmut Lenzing, \emph{A class of weighted projective curves
  arising in representation theory of finite-dimensional algebras},
  Singularities, representation of algebras, and vector bundles ({L}ambrecht,
  1985), Lecture Notes in Math., vol. 1273, Springer, Berlin, 1987,
  pp.~265--297. \MR{915180}

\bibitem{HRS96}
Dieter Happel, Idun Reiten, and Sverre~O. Smal\o, \emph{Tilting in abelian
  categories and quasitilted algebras}, Mem. Amer. Math. Soc. \textbf{120}
  (1996), no.~575, viii+ 88. \MR{1327209}

\bibitem{IW19}
Yuki {Hirano} and Michael {Wemyss}, \emph{{Stability Conditions for 3-fold
  Flops}}, arXiv e-prints (2019), arXiv:1907.09742.

\bibitem{Huy06}
Daniel Huybrechts, \emph{Fourier-{M}ukai transforms in algebraic geometry},
  Oxford Mathematical Monographs, The Clarendon Press, Oxford University Press,
  Oxford, 2006. \MR{2244106}

\bibitem{Huy14}
\bysame, \emph{Introduction to stability conditions}, Moduli spaces, London
  Math. Soc. Lecture Note Ser., vol. 411, Cambridge Univ. Press, Cambridge,
  2014, pp.~179--229. \MR{3221296}

\bibitem{HMS08}
Daniel Huybrechts, Emanuele Macr\`\i, and Paolo Stellari, \emph{Stability
  conditions for generic {$K3$} categories}, Compos. Math. \textbf{144} (2008),
  no.~1, 134--162. \MR{2388559}

\bibitem{Ike14}
Akishi {Ikeda}, \emph{{Stability conditions for preprojective algebras and root
  systems of Kac-Moody Lie algebras}}, arXiv e-prints (2014), arXiv:1402.1392.

\bibitem{IY17}
Kenji Iohara and Hiroshi Yamada, \emph{Double loop algebras and elliptic root
  systems}, Ann. Mat. Pura Appl. (4) \textbf{196} (2017), no.~2, 743--771.
  \MR{3624973}

\bibitem{Kac90}
Victor~G. Kac, \emph{Infinite-dimensional {L}ie algebras}, third ed., Cambridge
  University Press, Cambridge, 1990. \MR{1104219}

\bibitem{Kel11}
Bernhard Keller, \emph{Deformed {C}alabi-{Y}au completions}, J. Reine Angew.
  Math. \textbf{654} (2011), 125--180, With an appendix by Michel Van den
  Bergh. \MR{2795754}

\bibitem{KS11}
Marc {Krawitz} and Yefeng {Shen}, \emph{{Landau-Ginzburg/Calabi-Yau
  Correspondence of all Genera for Elliptic Orbifold {${\Bbb P}^1$}}}, arXiv
  e-prints (2011), arXiv:1106.6270.

\bibitem{LM93}
Helmut Lenzing and Hagen Meltzer, \emph{Sheaves on a weighted projective line
  of genus one, and representations of a tubular algebra [ {MR}1206953
  (94d:16019)]}, Representations of algebras ({O}ttawa, {ON}, 1992), CMS Conf.
  Proc., vol.~14, Amer. Math. Soc., Providence, RI, 1993, pp.~313--337.
  \MR{1265294}

\bibitem{Mac07}
Emanuele Macr\`\i, \emph{Stability conditions on curves}, Math. Res. Lett.
  \textbf{14} (2007), no.~4, 657--672. \MR{2335991}

\bibitem{MS17}
Emanuele Macr\`\i and Benjamin Schmidt, \emph{Lectures on {B}ridgeland
  stability}, Moduli of curves, Lect. Notes Unione Mat. Ital., vol.~21,
  Springer, Cham, 2017, pp.~139--211. \MR{3729077}

\bibitem{Mel95}
Hagen Meltzer, \emph{Exceptional sequences for canonical algebras}, Arch. Math.
  (Basel) \textbf{64} (1995), no.~4, 304--312. \MR{1318999}

\bibitem{MR11}
Todor {Milanov} and Yongbin {Ruan}, \emph{{Gromov-Witten theory of elliptic
  orbifold {${\Bbb P}^1$} and quasi-modular forms}}, arXiv e-prints (2011),
  arXiv:1106.2321.

\bibitem{Oka06}
So~Okada, \emph{Stability manifold of {${\Bbb P}^1$}}, J. Algebraic Geom.
  \textbf{15} (2006), no.~3, 487--505. \MR{2219846}

\bibitem{EARSI}
K.~Saito, \emph{Extended affine root systems. {I}. {C}oxeter transformations},
  Publ. Res. Inst. Math. Sci. \textbf{21} (1985), no.~1, 75--179. \MR{780892}

\bibitem{EARSII}
Kyoji Saito, \emph{Extended affine root systems. {II}. {F}lat invariants},
  Publ. Res. Inst. Math. Sci. \textbf{26} (1990), no.~1, 15--78. \MR{1053908}

\bibitem{ST01}
Paul Seidel and Richard Thomas, \emph{Braid group actions on derived categories
  of coherent sheaves}, Duke Math. J. \textbf{108} (2001), no.~1, 37--108.
  \MR{1831820}

\bibitem{STW16}
Yuuki Shiraishi, Atsushi Takahashi, and Kentaro Wada, \emph{On {W}eyl groups
  and {A}rtin groups associated to orbifold projective lines}, J. Algebra
  \textbf{453} (2016), 249--290. \MR{3465355}

\bibitem{Tho06}
Richard~P. Thomas, \emph{Stability conditions and the braid group}, Comm. Anal.
  Geom. \textbf{14} (2006), no.~1, 135--161. \MR{2230573}

\bibitem{Tod08_crepant}
Yukinobu Toda, \emph{Stability conditions and crepant small resolutions},
  Trans. Amer. Math. Soc. \textbf{360} (2008), no.~11, 6149--6178. \MR{2425708}

\bibitem{Tod13_extremal}
\bysame, \emph{Stability conditions and extremal contractions}, Math. Ann.
  \textbf{357} (2013), no.~2, 631--685. \MR{3096520}

\bibitem{Tra17}
Rebecca {Tramel} and Bingyu {Xia}, \emph{{Bridgeland stability conditions on
  surfaces with curves of negative self-intersection}}, arXiv e-prints (2017),
  arXiv:1702.06252.

\bibitem{VdB04_flops}
Michel Van~den Bergh, \emph{Three-dimensional flops and noncommutative rings},
  Duke Math. J. \textbf{122} (2004), no.~3, 423--455. \MR{2057015}

\bibitem{vdL83}
Harm van~der Lek, \emph{Extended {A}rtin groups}, Singularities, {P}art 2
  ({A}rcata, {C}alif., 1981), Proc. Sympos. Pure Math., vol.~40, Amer. Math.
  Soc., Providence, RI, 1983, pp.~117--121. \MR{713240}

\end{thebibliography}

\end{document}